\theoremstyle{plain}
\newtheorem{thm}{Theorem}%[section]
\newtheorem{lem}{Lemma}[section]
\newtheorem{cor}[lem]{Corollary}
\newtheorem{prop}[lem]{Proposition}
\theoremstyle{definition}
\newtheorem{defn}[lem]{Definition}
\newtheorem{rem}[lem]{Remark}
\newtheorem{ex}[lem]{Example}
\let\ssection=\section
\renewcommand{\section}{\setcounter{equation}{0}\ssection}
\newcommand{\Z}{\mathbb{Z}}
\newcommand{\C}{\mathbb{C}}
\newcommand{\T}{\mathbb{T}}
\newcommand{\Q}{\mathbb{Q}}
\newcommand{\cC}{\mathcal{C}}
\newcommand{\F}{\mathcal{F}}
\newcommand{\G}{\mathcal{G}}
\newcommand{\Pc}{\mathcal{P}} 
\newcommand{\Qc}{\mathcal{Q}} 
\newcommand{\Rc}{\mathcal{R}}
\newcommand{\Sc}{\mathcal{S}}
\newcommand{\cX}{\mathcal{X}}
\newcommand{\id}{\textup{Id}}
\newcommand{\Id}{\mathrm{Id}}
\newcommand{\GL}{\mathrm{GL}}
\newcommand{\SL}{\mathrm{SL}}
\newcommand{\PSL}{\mathrm{PSL}}
\newcommand{\half}{\frac{1}{2}}
\def\r{\rho}
\def\s{\sigma}
\begin{document}

\title{$q$-deformed rationals and $q$-continued fractions}

\author{Sophie Morier-Genoud, Valentin Ovsienko}

\address{Sophie Morier-Genoud,
Sorbonne Universit\'e, Universit\'e Paris Diderot, CNRS,
Institut de Math\'ematiques de Jussieu-Paris Rive Gauche,
 F-75005, Paris, France
}

\address{
Valentin Ovsienko,
Centre national de la recherche scientifique,
Laboratoire de Math\'ematiques 
U.F.R. Sciences Exactes et Naturelles 
Moulin de la Housse - BP 1039 
51687 Reims cedex 2,
France}
\email{sophie.morier-genoud@imj-prg.fr, valentin.ovsienko@univ-reims.fr}

\keywords{$q$-rationals, $q$-continued fractions, total positivity, Farey tessellation, quiver representations.}

%\subjclass{XXX}

\begin{abstract}
We introduce a notion of $q$-deformed rational numbers and
$q$-deformed continued fractions.
A $q$-deformed rational is encoded by a triangulation of a polygon
and can be computed recursively.
The recursive formula is analogous to the
$q$-deformed Pascal identitiy for the Gaussian binomial coefficients, but
the Pascal triangle is replaced by the Farey graph.
The coefficients of the polynomials
defining the $q$-rational count quiver subrepresentations of the
maximal indecomposable representation of the graph dual to the triangulation.
Several other properties, such as total positivity properties,
$q$-deformation of the Farey graph, matrix presentations and $q$-continuants are given,
as well as a relation to the Jones polynomial of rational knots.
\end{abstract}

\maketitle

\thispagestyle{empty}

\tableofcontents

%%%%%%%%%%%%%%%%%%%%
%%%%%%%%%%%%%%%%%%%%
\section*{Introduction}\label{IntSec}
%%%%%%%%%%%%%%%%%%%%
%%%%%%%%%%%%%%%%%%%%

Let $q$ be a formal parameter,
recall the classical notion of $q$-integers:
$$
[a]_{q}:=\frac{1-q^a}{1-q}=1+q+q^{2}+\cdots+q^{a-1},
$$
where $a$ is a non-negative integer.
Surprisingly, a similar notion in the case of rational numbers is missing in the literature.
A straightforward attempt to $q$-deform a rational number~$\frac{r}{s}$
would be
$\frac{[r]_q}{[s]_q}$, or $\frac{q^{\frac{r}{s}}-1}{q-1}$, (which is essentially the same formula
modulo rescaling).
In those formulas the numerator and denominator are $q$-deformed separately, as $q$-integers.
However, such definitions would not satisfy interesting combinatorial properties.

The goal of this paper is to define a version of $q$-analogue of rational numbers,
using a combinatorial approach.
In our approach the numerator and denominator affect each other's $q$-deformation.
The $q$-deformation of a rational $\frac{r}{s}$ is a quotient of two polynomials:
$$
\left[\frac{r}{s}\right]_{q}=\frac{\Rc(q)}{\Sc(q)},
$$
where~$\Rc$ and~$\Sc$ both depend on~$r$ and~$s$.
The simplest example to illustrate this is:
$$
\left[\frac{5}{2}\right]_{q}=
\frac{1+2q+q^{2}+q^{3}}{1+q},
\qquad\qquad
\left[\frac{5}{3}\right]_{q}=
\frac{1+q+2q^{2}+q^{3}}{1+q+q^{2}}
$$
where the ``quantized'' $5$ in the numerator is different depending on the denominator.

The notion of $q$-rational we introduce has a certain similarity to that of the 
classical Gaussian $q$-binomial coefficient
${r\choose{}s}_q$, which is a polynomial depending on~$r$ and~$s$ that $q$-deforms 
the usual binomial coefficient ${r\choose{}s}$.
This similarity manifests in the comparison of the weighted Farey sum rule and the $q$-version of
Pascal's triangle.
However, the Farey graph is more complicated than the Pascal triangle,
since every rational has infinitely many neighbors.

The main properties of the $q$-rationals we know so far are the following.

\begin{enumerate}
\item[{\bf A}]
$q$-deformed rationals satisfy a {\it total positivity} property.
For every rational~$\frac{r}{s}>0$,
both polynomials~$\Rc$ and~$\Sc$ in the numerator and denominator of~$\left[\frac{r}{s}\right]_{q}$
have positive integer coefficients.
Moreover, for two rationals,~$\frac{r}{s}>\frac{r'}{s'}$, the polynomial
$\Rc\Sc'-\Sc\Rc'$ has positive integer coefficients (see Theorem~\ref{PosPropBis}).

\item[{\bf B}]
There is an ``enumerative interpretation'' of the $q$-rationals (Theorem~\ref{EnumThm}).
Coefficients of the polynomials in the numerator and denominator count 
so-called {\it closure sets} of a certain graph.
This is also equivalent to counting subrepresentations of the maximal
indecomposable quiver representation.

\item[{\bf C}]
A significant property of $q$-rationals is that of {\it convergence}.
Given an irrational real number $x$ and a sequence of rationals $\left(\frac{r_i}{s_i}\right)_{i\geq1}$ converging to~$x$,
the coefficients of the Taylor series at $q=0$ of the rational functions $\frac{\Rc_i(q)}{\Sc_i(q)}$ stabilize as~$i$ grows.
Moreover, the limit series does not depend on the choice of the converging sequence of rationals.
It allows us to extend the notion of $q$-deformation to irrational numbers.
This property is proved in~\cite{QR}, and we will not discuss it here.

\end{enumerate}

In addition, we obtain $q$-analogues of several classical notions and results.

\begin{enumerate}

\item[(a)]
Theorem~\ref{Thmqa} provides a relationship between regular and negative continued fractions,
generalizing the formulas from~\cite{Hir,HZ}.

\item[(b)]
The {\it weighted Farey graph}.
Theorem~\ref{WeightThm} generalizes the result of~\cite{Ser} that relates continued fractions and hyperbolic geometry.

\item[(c)]
The $q$-continued fractions and $q$-rationals
are represented by products of elementary matrices
$$
R_{q}:=
\begin{pmatrix}
q&1\\[4pt]
0&1
\end{pmatrix},
\qquad
L_{q}:=
\begin{pmatrix}
1&0\\[4pt]
1&q^{-1}
\end{pmatrix},
\qquad
S_{q}:=
\begin{pmatrix}
0&-q^{-1}\\[4pt]
1&0
\end{pmatrix},
$$
that are $q$-{\it deformed generators} of~$\PSL(2,\Z)$.

\item[(d)]
We obtain $q$-analogues of the continuants 
(i.e., the determinants that give the numerator and denominator of a continued fraction
in terms of its coefficients), and prove the Ptolemy relations
generalizing the Euler identity for the continuants (see Proposition~\ref{EulqProp}).
\end{enumerate}

To summarize, the results (a)--(d) of the above list give four equivalent
ways to define and calculate $q$-rationals.
These four interpretations can be used for different purposes.

Throughout the paper, we work only with the case $\frac{r}{s}>1$.
However, the following property
$$
\left[\frac{r}{s}+1\right]_q=q\left[\frac{r}{s}\right]_q+1,
$$
provided $\frac{r}{s}\geq1$,
was observed in~\cite{QR} as a simple corollary of the definition~\eqref{qa} below.
This formula allows us to extend the notion of $q$-deformed rational number 
to~$\frac{r}{s}\leq1$ (including the negative case).

We test the notion of $q$-rationals on the example of two particularly simple infinite sequences of rationals:
the ratios of consecutive Fibonacci and Pell numbers.
These are consecutive convergents of the {\it golden ratio} and those of the {\it silver ratio}.
Our approach leads to certain versions of $q$-Fibonacci and $q$-Pell numbers.
The Fibonacci case turns out to be related to the well-known Sequence A079487 of OEIS
(and its mirror A123245),
while the obtained version of $q$-Pell numbers seems to be new.
Note that $q$-analogues of the Fibonacci and Pell numbers is a vast subject,
and several different versions of them are known, but do not appear in our context.

The notion of $q$-rational introduced in this paper needs only quite elementary
constructions.
However, this notion arose at the interface of two theories,
that of continued fractions and cluster algebra. 
Continued fractions appeared recently in the theory of cluster algebras \cite{CS1, Rab}.
These motivating relations are presented in two appendices
in a brief form with no technical details.
In Appendix~\ref{jones}, we relate the $q$-rationals to the Jones polynomials of rational knots.
In Appendix~\ref{ptol}, we relate the $q$-rationals to the $F$-polynomials of a cluster algebra.

This paper is the second part of~\cite{FB1} where we discussed combinatorial properties of
continued fractions.
However, it can be read independently.
The subsequent paper~\cite{QR} explores ``$q$-deformed real numbers''
based on the notion of $q$-rationals developed here.

%%%%%%%%%%%%%%%%%%%%
%%%%%%%%%%%%%%%%%%%%
\section{Introducing $q$-deformed rationals and continued fractions}\label{ICFqSec}
%%%%%%%%%%%%%%%%%%%%
%%%%%%%%%%%%%%%%%%%%
In this section 
we define $q$-deformations of finite continued fractions and formulate their first basic properties.

%%%%%%%%%%%%%%%%%%%%
\subsection{The main definition}
%%%%%%%%%%%%%%%%%%%%
Given a rational number $\frac{r}{s}\in\Q$,
we always assume~$\frac{r}{s}>1$ and that $r$ and $s$ are positive coprime integers.
It has two different continued fraction expansions
$$
\frac{r}{s}
\quad=\quad
a_1 + \cfrac{1}{a_2 
          + \cfrac{1}{\ddots +\cfrac{1}{a_{2m}} } }
           \quad =\quad
c_1 - \cfrac{1}{c_2 
          - \cfrac{1}{\ddots - \cfrac{1}{c_k} } } ,
$$
with $c_i\geq2$ and $a_i\geq1$, denoted by
$[a_1,\ldots,a_{2m}]$ and $\llbracket{}c_1,\ldots,c_k\rrbracket{}$, respectively.
They are usually called {\it regular} and {\it negative} (or ``minus-'') 
continued fractions, respectively. Considering an even number of coefficients in the regular expansion and coefficients greater than 2 in the negative expansion make the expansions unique.

We define $q$-deformations of the regular and negative continued fractions.
We start with an explicit formula;
equivalent, but more combinatorial, definitions will be given in Sections~\ref{WTSec}
and~\ref{EmunSec}.

\begin{defn}
\label{qDefn}
{\rm
(a)
Given a regular continued fraction $[a_{1}, \ldots, a_{2m}]$,
we define its $q$-deformation by
\begin{equation}
\label{qa}
[a_{1}, \ldots, a_{2m}]_{q}:=
[a_1]_{q} + \cfrac{q^{a_{1}}}{[a_2]_{q^{-1}} 
          + \cfrac{q^{-a_{2}}}{[a_{3}]_{q} 
          +\cfrac{q^{a_{3}}}{[a_{4}]_{q^{-1}}
          + \cfrac{q^{-a_{4}}}{
        \cfrac{\ddots}{[a_{2m-1}]_q+\cfrac{q^{a_{2m-1}}}{[a_{2m}]_{q^{-1}}}}}
          } }} .
\end{equation}
%Note that $\ell$ can be even or odd.

(b)
Given a negative continued  fraction  $\llbracket{}c_1,\ldots,c_k\rrbracket$,
we define its $q$-deformation by 
\begin{equation}
\label{qc}
\llbracket{}c_1,\ldots,c_k\rrbracket_{q}:=
[c_1]_{q} - \cfrac{q^{c_{1}-1}}{[c_2]_{q} 
          - \cfrac{q^{c_{2}-1}}{\ddots \cfrac{\ddots}{[c_{k-1}]_{q}- \cfrac{q^{c_{k-1}-1}}{[c_k]_{q}} } }} 
\end{equation}
}.
\end{defn}

The following statement will be proved in Section~\ref{PT1Sec}.

\begin{thm}
\label{Thmqa}
If a positive continued fraction $[a_{1}, \ldots, a_{2m}]$
and a negative continued fraction $\llbracket{}c_1,\ldots,c_k\rrbracket$ represent the same rational number,
then their $q$-deformations also coincide:
\begin{equation*}
[a_{1}, \ldots, a_{2m}]_{q}=\llbracket{}c_1,\ldots,c_k\rrbracket_{q}.
\end{equation*}
\end{thm}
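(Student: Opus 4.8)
The plan is to show that the classical identity relating the regular and negative continued fraction expansions of a fixed rational lifts to the $q$-deformed setting. Recall that if $\frac{r}{s}=[a_1,\ldots,a_{2m}]=\llbracket c_1,\ldots,c_k\rrbracket$, then the two expansions are related by the well-known substitution rule (see \cite{Hir,HZ}): the negative expansion is obtained from the regular one by the pattern
$$
[a_1,a_2,a_3,a_4,\ldots]\ \longleftrightarrow\ \llbracket a_1+1,\underbrace{2,\ldots,2}_{a_2-1},a_3+2,\underbrace{2,\ldots,2}_{a_4-1},a_5+2,\ldots\rrbracket.
$$
So the first step is to reduce \textbf{Theorem~\ref{Thmqa}} to a purely local, finitely many identities statement: it suffices to verify that each elementary block of the negative expansion — namely a string $\llbracket \ldots, 2,\ldots,2,\ldots\rrbracket$ coming from a single regular coefficient $a_{2j}$ (or $a_{2j}+1$ shifts at the junctions) — reproduces, under the $q$-deformation \eqref{qc}, exactly the contribution of the corresponding regular block under \eqref{qa}. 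Because continued fractions are built by iterated Möbius transformations, and both \eqref{qa} and \eqref{qc} are defined by such iterations, it is enough to match the $2\times 2$ matrices (up to scalar) that encode each elementary step.

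Concretely, I would introduce the matrix presentation hinted at in item (c) of the introduction: associate to the regular expansion a product of matrices built from $R_q$ (for a letter $a_i$ one uses $R_q^{a_i}$, with the $q\mapsto q^{-1}$ alternation absorbed into conjugation by $S_q$ or by $\mathrm{diag}(q,1)$), and to the negative expansion a product built from the matrices $\begin{pmatrix} [c]_q & -q^{c-1}\\ 1 & 0\end{pmatrix}$ that implement one step of \eqref{qc}. The key computational lemma is then the single identity
$$
\begin{pmatrix} [a+1]_q & -q^{a}\\[2pt] 1 & 0\end{pmatrix}
\begin{pmatrix} [2]_q & -q\\[2pt] 1 & 0\end{pmatrix}^{\,b-1}
\ =\ (\text{scalar})\cdot \begin{pmatrix} \ast & \ast\\ \ast & \ast\end{pmatrix},
$$
where the right-hand side is the matrix produced by the two regular letters $a$ and $b$ in \eqref{qa}; one checks the power of the $2\times 2$ Jordan-type matrix $\left(\begin{smallmatrix}[2]_q & -q\\ 1 & 0\end{smallmatrix}\right)$ in closed form (its entries are again $q$-integers, since $[2]_q\cdot[n]_q - q[n-1]_q=[n+1]_q$), and then verifies the resulting $2\times 2$ equality entrywise. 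I would prove this by induction on $b$, the inductive step being the $q$-integer recursion $[n+1]_q=[n]_q+q^n=q[n]_q+1$. Once this block identity holds, a second induction — on $m$, the number of blocks, peeling off the expansions from the right — assembles the global identity, with the junction terms $a_{2j+1}\mapsto a_{2j+1}+2$ accounted for by the leftover scalar factors from adjacent blocks.

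The main obstacle I anticipate is bookkeeping the alternation $q\leftrightarrow q^{-1}$ in \eqref{qa} against the uniform appearance of $q$ in \eqref{qc}: the regular $q$-deformation is genuinely asymmetric in odd/even positions, so the matrix for an even-indexed letter $a_{2j}$ is not simply $R_q^{a_{2j}}$ but a conjugate involving $q^{-1}$, and one must track how these inverse powers telescope to produce the honest polynomial (Laurent-free) entries of the negative side. Getting the normalization of the scalar prefactors exactly right — so that after clearing denominators one really lands on the same rational function and not merely a proportional one — is where the care is needed; the cleanest route is to fix a normalization convention (e.g. always scaling the first row of each step matrix to start with a $q$-integer) at the very beginning and check it is preserved by the block identity. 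Everything else is the two nested inductions plus the elementary $q$-integer algebra.
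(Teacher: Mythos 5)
Your proposal follows essentially the same route as the paper: the paper proves Theorem~\ref{Thmqa} by establishing $\widetilde{M}^{+}_{q}(a_{1},\ldots,a_{2m})=M_{q}(c_{1},\ldots,c_{k})R_{q}$ via the conversion formula~\eqref{HZRegEqt} and the matrix presentation of Proposition~\ref{PropMM}, where your block identity for $\bigl(\begin{smallmatrix}[2]_q&-q\\ 1&0\end{smallmatrix}\bigr)^{b-1}=(R_q^2S_q)^{b-1}$ is exactly the paper's regrouping together with the key relation $R_qS_qR_q=qL_q$, and your ``leftover scalar factors'' are the normalization $q^{a_2+a_4+\cdots+a_{2m}}$. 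The approach is correct and matches the paper's proof in all essentials.
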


In other words, the explicit formula (see~\cite{Hir,HZ}) for conversion between the coefficients $a$'s and $c$'s
remains unchanged in the $q$-deformed case.
This formula will be recalled in Section~\ref{Trs}.

Given a rational number~$\frac{r}{s}=[a_{1}, \ldots, a_{2m}]=\llbracket{}c_1,\ldots,c_k\rrbracket$,
the rational function defined by~\eqref{qa} and~\eqref{qc} will be called
the {\it $q$-deformation} of~$\frac{r}{s}$, or simply a {\it $q$-rational}.
We will always write this rational function as the ratio of two coprime polynomials in $q$
with integer coefficients that we denote by $\Rc$ and $\Sc$, i.e.
\begin{equation}
\label{RcSc}
\left[\frac{r}{s}\right]_{q}=\frac{\Rc(q)}{\Sc(q)},
\end{equation}
with~$\Rc$ and~$\Sc$ in $\Z[q]$.
We choose the positive signs of the leading terms of $\Rc$ and $\Sc$
to guarantee their uniqueness.

We show furthermore
(cf. Corollary \ref{DegRSProp}) that the leading and the constant coefficients of $\Rc$ and $\Sc$
are equal to~$1$ and that
$$
\Rc(1)=r,
\quad
\Sc(1)=s.
$$
We also conjecture that the sequences of coefficients of $\Rc$ and $\Sc$ are unimodal.

Let us start with simple examples.

\begin{ex}
\label{FirstEx}

(a)
The cases~$s=1$ and~$s=r-1$ are the only cases where
$\left[\frac{r}{s}\right]_{q}$ coincides with the quotient of the $q$-deformed integers standing in the numerator and denominator:
$$
\left[\frac{r}{r-1}\right]_{q}=\frac{[r]_{q}}{[r-1]_{q}}.
$$

(b)
The first non-trivial examples are
$$
\begin{array}{lllllll}
\left[\frac{5}{2}\right]_{q}&=&
\llbracket3,2\rrbracket_{q}&=&
[2,2]_{q}&=&
\frac{1+2q+q^{2}+q^{3}}{1+q},\\[8pt]
\left[\frac{5}{3}\right]_{q}&=&
\llbracket2,3\rrbracket_{q}&=&
[1,1,1,1]_{q}&=&
\frac{1+q+2q^{2}+q^{3}}{1+q+q^{2}},\\[8pt]
\left[\frac{7}{3}\right]_{q}&=&
\llbracket3,2,2\rrbracket_{q}&=&
[2,3]_{q}&=&
\frac{1+2q+2q^2+q^3+q^4}{1+q+q^{2}},\\[8pt]
\left[\frac{7}{4}\right]_{q}&=&
\llbracket2,4\rrbracket_{q}&=&
[1,1,2,1]_{q}&=&
\frac{1+q+2q^2+2q^3+q^4}{1+q+q^2+q^3},\\[8pt]
\left[\frac{7}{5}\right]_{q}&=&
\llbracket2,2,3\rrbracket_{q}&=&
[1,2,1,1]_{q}&=&
\frac{1+q+2q^2+2q^3+q^4}{1+q+2q^2+q^3}.
\end{array}
$$

(c)
The $q$-rationals with denominator~$[2]_q$ are:
$$
\textstyle\left[\frac{2m+1}{2}\right]_{q}=
\frac{1+2q+2q^{2}+\cdots+2q^{m-1}+q^{m}+q^{m+1}}{1+q}.
$$

(d)
The $q$-rationals with denominator~$[3]_q$ are:
$$
\begin{array}{rcl}
\left[\frac{3m+1}{3}\right]_{q}&=&
\frac{1+2q+3q^{2}+3q^{3}+\cdots+3q^{m-1}+2q^{m}+q^{m+1}+q^{m+2}}{1+q+q^2},\\[6pt]
\left[\frac{3m+2}{3}\right]_{q}&=&\frac{1+2q+3q^{2}+3q^{3}+\cdots+3q^{m-1}+2q^{m}+2q^{m+1}+q^{m+2}}{1+q+q^2}.
\end{array}
$$

\end{ex}

We see that the degrees of the polynomials ~$\Rc$ and~$\Sc$ are in general lower than those of~$[r]_q$ and~$[s]_q$.

%%%%%%%%%%%%%%%%%%%%
\subsection{Total positivity}
%%%%%%%%%%%%%%%%%%%%

An important property of the polynomials $\Rc$ and $\Sc$ is the following property of positivity.

\begin{prop}
\label{PosProp}
For every $q$-rational~$\left[\frac{r}{s}\right]_{q}=\frac{\Rc(q)}{\Sc(q)}$,
the polynomials ${\Rc}$ and  ${\Sc}$ have positive integer coefficients.
\end{prop}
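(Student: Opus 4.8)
The plan is to prove positivity of $\Rc$ and $\Sc$ by induction along the Stern--Brocot/Farey tree, using the matrix presentation sketched in item (c) of the introduction. First I would establish that a $q$-rational $\left[\frac{r}{s}\right]_q$ can be obtained as a product of the matrices $R_q$ and $L_q$ (coming from the two continued fraction expansions), applied to the base point $\frac10$. Concretely, writing $\frac rs = [a_1,\dots,a_{2m}]$, one checks that the column vector $\bigl(\begin{smallmatrix}\Rc\\ \Sc\end{smallmatrix}\bigr)$ is, up to an overall power of $q$, the image of $\bigl(\begin{smallmatrix}1\\0\end{smallmatrix}\bigr)$ under $R_q^{a_1}L_q^{a_2}R_q^{a_3}\cdots L_q^{a_{2m}}$; this is the $q$-analogue of the classical fact that $R=\bigl(\begin{smallmatrix}1&1\\0&1\end{smallmatrix}\bigr)$, $L=\bigl(\begin{smallmatrix}1&0\\1&1\end{smallmatrix}\bigr)$ generate the numerators/denominators of continued fractions. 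This identification should follow by unfolding the nested fraction~\eqref{qa}, matching each level of the continued fraction with one factor of the product.

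Next, the key observation is a positivity-preservation property of the generators. The matrix $R_q$ has entries in $\Z_{\ge 0}[q]$, so it sends vectors with coefficients in $\Z_{\ge 0}[q]$ to vectors with coefficients in $\Z_{\ge 0}[q]$. The matrix $L_q$ involves $q^{-1}$, so instead I would track pairs $(P,Q)$ with $P,Q\in\Z_{\ge 0}[q,q^{-1}]$ and show that the whole product $R_q^{a_1}L_q^{a_2}\cdots L_q^{a_{2m}}$ applied to $\bigl(\begin{smallmatrix}1\\0\end{smallmatrix}\bigr)$ has Laurent-polynomial entries with nonnegative coefficients; clearing the (unique, by coprimality) power of $q$ then yields honest polynomials with nonnegative coefficients. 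The cleanest way to package this is to induct on $m$, or better, to induct directly on the Farey/Stern--Brocot structure: every $\frac rs>1$ is the Farey mediant of its two parents $\frac{r_1}{s_1}<\frac{r_2}{s_2}$, and I would prove the stronger statement that there is a $q$-Farey sum rule $\Rc = q^{?}\Rc_2 + \Rc_1$, $\Sc = q^{?}\Sc_2 + \Sc_1$ (with explicit exponents, to be extracted from~\eqref{qa} or the matrix product), so positivity of the child follows from positivity of both parents, with base cases $\frac10$ and $\frac21$ (or $\frac{n}{1}=[n]_q$) handled by inspection.

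The main obstacle I anticipate is bookkeeping of the powers of $q$: showing that at each step the Laurent polynomials involved have a single common monomial factor which, once removed, leaves nonnegative-coefficient polynomials, and that this is consistent with the normalization $\Rc(1)=r$, $\Sc(1)=s$. In particular I must verify that no cancellation of the form ``positive minus positive'' occurs --- the appearance of $q^{-1}$ in $L_q$ and the presence of a genuine subtraction in the negative continued fraction~\eqref{qc} make this a real point to check rather than a formality. I expect this to be resolved by choosing the right invariant: e.g. tracking not just $(\Rc,\Sc)$ but the full $2\times 2$ matrix $M_q(\tfrac rs)$ and proving that $q^{m}M_q$ (for the appropriate $m$) has all four entries in $\Z_{\ge0}[q]$, since then the Farey recursion and hence positivity propagate automatically. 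The equivalence $[a_1,\dots,a_{2m}]_q=\llbracket c_1,\dots,c_k\rrbracket_q$ from Theorem~\ref{Thmqa} guarantees that the result does not depend on which expansion we use, so it suffices to argue with the regular one.
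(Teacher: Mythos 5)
Your proposal is correct, but it takes a different route from the one the paper actually uses for Proposition~\ref{PosProp}. The paper's proof (in \S\ref{propRS}) stays entirely with the \emph{negative} expansion and the convergent recursion $\Rc_{i+1}=[c_{i+1}]_q\Rc_i-q^{c_i-1}\Rc_{i-1}$, where a genuine subtraction must be controlled; the trick is a strengthened induction hypothesis --- that both $\Rc_i$ and $\Rc_i-q^{c_i-1}\Rc_{i-1}$ have nonnegative coefficients --- propagated via the rewriting $\Rc_{i+1}=q[c_{i+1}-1]_q\Rc_i+(\Rc_i-q^{c_i-1}\Rc_{i-1})$, which is where the hypothesis $c_{i+1}\ge 2$ enters. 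You instead work with the regular expansion and the product $R_q^{a_1}L_q^{a_2}\cdots L_q^{a_{2m}}$, where positivity is manifest because $R_q$ and $L_q$ have entries in $\Z_{\ge0}[q,q^{-1}]$ and so no cancellation can ever occur; the only point to settle is that the first column of this product equals $(\Rc,\Sc)$ up to a single monomial factor, which you correctly propose to extract from the determinant (a power of $q$, since $\det R_q=q$ and $\det L_q=q^{-1}$) together with the normalization $\Rc(0)=\Sc(0)=1$. This is exactly the content of Propositions~\ref{PropMM} and~\ref{FacMM}, so your argument is sound but front-loads the matrix machinery of Section~\ref{qMatSec}, whereas the paper's argument is self-contained at the level of the recursion and simultaneously yields the degree, leading-term and coprimality statements of Corollary~\ref{DegRSProp}. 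Your fallback route via the weighted Farey mediant $\Rc=\Rc'+q^{\ell}\Rc''$ is also valid and is precisely the alternative proof the paper points out after Definition~\ref{FSDef} as a consequence of Theorem~\ref{WeightThm}.
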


This property is not immediate from the expression \eqref{qc} nor from the expression \eqref{qa}. 
We give an elementary proof below in \S\ref{propRS}.
We will give a combinatorial interpretation of the coefficients;
see Section~\ref{EmunSec}.

Furthermore, Proposition~\ref{PosProp} has the following strengthened version.

\begin{thm}
\label{PosPropBis}
For every pair of $q$-rationals,~$\left[\frac{r}{s}\right]_{q}$ and~$\left[\frac{r'}{s'}\right]_{q}$,
the polynomial in~$q$
\begin{equation}
\label{PosDiagEq}
\cX_{\frac{r}{s},\frac{r'}{s'}}:=\Rc\Sc'-\Sc\Rc'
\end{equation}
has positive integer coefficients, provided~$\frac{r}{s}>\frac{r'}{s'}$.
\end{thm}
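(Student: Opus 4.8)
The plan is to reduce Theorem~\ref{PosPropBis} to the special case of \emph{Farey neighbors} and then propagate along the Farey graph. Recall that the rationals $\frac{r}{s}>\frac{r'}{s'}$ can be connected by a finite path in the Farey graph, and along such a path each edge joins two rationals $\frac{a}{b},\frac{c}{d}$ with $|ad-bc|=1$; moreover any rational lying strictly between them is obtained by repeated mediants. So the first step is to establish the statement for a single Farey edge, i.e.\ to show that if $\frac{r}{s}$ and $\frac{r'}{s'}$ are Farey neighbors with $\frac{r}{s}>\frac{r'}{s'}$, then $\Rc\Sc'-\Sc\Rc'$ is a \emph{monomial} (a power of $q$), hence trivially has positive coefficients. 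This should follow from the matrix presentation hinted at in item~(c) of the introduction: writing $\left[\frac{r}{s}\right]_q$ via a product of the generators $R_q,L_q$ (the $q$-deformed $\PSL(2,\Z)$ generators), the pair $(\Rc,\Sc)$ and the pair coming from the ``other child'' of a vertex are the two columns of such a matrix, and the relevant $2\times 2$ determinant is a power of $q$ because $\det R_q=\det L_q=q$. The key identity to extract here is a $q$-analogue of the unimodularity of Farey neighbors: $\Rc\Sc'-\Sc\Rc' = \pm q^{N}$ for an explicit exponent $N$, with the sign fixed to $+$ by the normalization $\Rc(1)=r,\ \Sc(1)=s$ together with $\frac{r}{s}>\frac{r'}{s'}$.

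The second step is the induction that builds a general pair from Farey edges. Here I would use the \emph{weighted Farey sum rule} (property~\textbf{A} and the discussion around the $q$-Pascal identity): if $\frac{r''}{s''}$ is the mediant of $\frac{r}{s}$ and $\frac{r'}{s'}$, then $\Rc''$ and $\Sc''$ are obtained from those of the two parents by the weighted sum, something of the shape $\Rc'' = q^{\alpha}\Rc + q^{\beta}\Rc'$ and likewise for $\Sc''$, with nonnegative integer exponents $\alpha,\beta$ depending only on the side. Given three consecutive Farey rationals $\frac{r}{s} > \frac{r''}{s''} > \frac{r'}{s'}$ with $\frac{r''}{s''}$ the mediant, one computes
\[
\cX_{\frac{r}{s},\frac{r'}{s'}} \;=\; \Rc\Sc'-\Sc\Rc'
\;=\; q^{-\alpha}\bigl(\Rc\Sc''-\Sc\Rc''\bigr) \;+\; q^{-\beta}\bigl(\Rc''\Sc'-\Sc''\Rc'\bigr)
\;=\; q^{-\alpha}\cX_{\frac{r}{s},\frac{r''}{s''}} + q^{-\beta}\cX_{\frac{r''}{s''},\frac{r'}{s'}},
\]
using $\Sc'\Rc' - \Rc'\Sc' = 0$ and the analogous cancellation; after clearing the (a priori negative) powers of $q$ against the monomial values of the Farey-edge differences, all exponents become nonnegative. (The precise bookkeeping of which exponent appears where is the routine part I would not grind through.) Then any pair $\frac{r}{s}>\frac{r'}{s'}$ is obtained by a finite sequence of such mediant subdivisions of an initial Farey edge, so $\cX_{\frac{r}{s},\frac{r'}{s'}}$ is a nonnegative-integer-coefficient linear combination of monomials, hence itself has positive integer coefficients. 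Positivity of the \emph{individual} polynomials $\Rc,\Sc$ is Proposition~\ref{PosProp}, already available.

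The main obstacle I anticipate is making the additivity identity in the second step genuinely honest about the powers of $q$: one must verify that the weights $q^{\alpha},q^{\beta}$ in the $q$-Farey sum rule are exactly compatible with the $q$-unimodularity constant from step one, so that after the substitutions every term is a polynomial (no residual negative powers) and every coefficient is manifestly nonnegative. Equivalently, one needs the $q$-analogue of the Farey/Stern--Brocot structure to be set up so that the three determinants $\cX$ around a mediant triangle satisfy a clean Ptolemy-type relation; this is presumably exactly Proposition~\ref{EulqProp} (the $q$-continuant Ptolemy relation), so the cleanest write-up is: prove the monomial formula for Farey edges, invoke the $q$-Ptolemy relation for the mediant triangle, and induct on the number of edges in a Farey path from $\frac{r}{s}$ to $\frac{r'}{s'}$. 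A secondary point to check is the sign normalization — that $\frac{r}{s}>\frac{r'}{s'}$ forces the leading monomial coefficient to be $+1$ rather than $-1$ — which one pins down by evaluating at $q=1$, where $\cX$ becomes $rs'-sr'>0$.
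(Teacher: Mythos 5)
Your first step is sound and matches the paper: for Farey neighbours the determinant $\Rc\Sc'-\Sc\Rc'$ is a single power of $q$ (this is Corollary~\ref{voisin}, made precise in Proposition~\ref{FormVoisin}, and it does come from the matrices of convergents and $\det R_q=\det L_q=q$). The gap is in your second step. The mediant identity you write down is false: with $\Rc''=q^{\alpha}\Rc+q^{\beta}\Rc'$ and $\Sc''=q^{\alpha}\Sc+q^{\beta}\Sc'$ one computes
\[
\Rc\Sc''-\Sc\Rc''=q^{\beta}\bigl(\Rc\Sc'-\Sc\Rc'\bigr),
\qquad
\Rc''\Sc'-\Sc''\Rc'=q^{\alpha}\bigl(\Rc\Sc'-\Sc\Rc'\bigr),
\]
so each of $\cX_{\frac{r}{s},\frac{r''}{s''}}$ and $\cX_{\frac{r''}{s''},\frac{r'}{s'}}$ is already a monomial multiple of $\cX_{\frac{r}{s},\frac{r'}{s'}}$, and your proposed right-hand side equals $(q^{\beta-\alpha}+q^{\alpha-\beta})\,\cX_{\frac{r}{s},\frac{r'}{s'}}$, not $\cX_{\frac{r}{s},\frac{r'}{s'}}$. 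No additive decomposition of this shape can work, because $\cX_{\frac{r}{s},\frac{r'}{s'}}$ is in general a full polynomial, not a sum of monomials attached to Farey edges along a path. The fallback you suggest, Proposition~\ref{EulqProp}, does not rescue the induction either: the Ptolemy relation expresses a \emph{product} $\Delta_{i,k}\Delta_{j,\ell}$ as a positive combination, so to isolate the determinant you want you must divide by another $\Delta$, and positivity of coefficients is not preserved under division.

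The missing idea is an identification rather than a decomposition. The paper places both rationals as vertices $k<\ell$ of the common triangulated $N$-gon $\mathbb{T}_{\frac{r}{s},\frac{r'}{s'}}=\mathbb{T}_{r/s}\cup\mathbb{T}_{r'/s'}$ with quiddity $(c_1,\ldots,c_N)$ and proves (Lemma~\ref{NoLabPro}), by a $2\times2$-matrix computation based on Proposition~\ref{FacMM} and the surgery relation~\eqref{qFirstSurM}, that
\[
\cX_{\frac{r}{s},\frac{r'}{s'}}=q^{c}\,\Rc_{k,\ell},
\qquad\text{where}\qquad
\frac{\Rc_{k,\ell}}{\Sc_{k,\ell}}=\llbracket c_{k+1},\ldots,c_{\ell-1}\rrbracket_q .
\]
That is, the cross-determinant of two arbitrary $q$-rationals is, up to a power of $q$, itself the numerator of a $q$-rational (a $q$-continuant of the segment of the quiddity sequence strictly between the two vertices), and positivity then follows from Proposition~\ref{PosProp}. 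If you want to keep your two-step outline, this lemma is the statement you must prove in place of your mediant recursion.
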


This statement is proved in Section~\ref{PT2Sec}.
The following corollary can be deduced by evaluating the polynomials at $q=1$.

\begin{cor}
\label{voisin}
Let $\frac{r}{s}>\frac{r'}{s'}$ be two rationals and let 
$\frac{\Rc}{\Sc}=\left[\frac{r}{s}\right]_{q}$ and $\frac{\Rc'}{\Sc'}=\left[\frac{r'}{s'}\right]_{q}$. 
One has, for some integer~$\alpha\geq 0$,
$$
\Rc\Sc'-\Sc\Rc' = q^{\alpha},
\quad 
\hbox{ if and only if }
\quad 
rs'-r's= 1.
$$
\end{cor}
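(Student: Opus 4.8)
The plan is to deduce Corollary~\ref{voisin} directly from Theorem~\ref{PosPropBis} together with the evaluation conditions $\Rc(1)=r$, $\Sc(1)=s$ fixed in~\eqref{RcSc}. Assume $\frac{r}{s}>\frac{r'}{s'}$. By Theorem~\ref{PosPropBis}, the polynomial $\cX_{\frac{r}{s},\frac{r'}{s'}}=\Rc\Sc'-\Sc\Rc'$ has nonnegative integer coefficients, and evaluating at $q=1$ gives $\cX_{\frac{r}{s},\frac{r'}{s'}}(1)=\Rc(1)\Sc'(1)-\Sc(1)\Rc'(1)=rs'-r's$, which is a positive integer since $\frac{r}{s}>\frac{r'}{s'}$. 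So we have a polynomial with nonnegative integer coefficients whose coefficient sum equals $rs'-r's$.

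The key observation is then elementary: a polynomial with nonnegative integer coefficients equals a single monomial $q^{\alpha}$ with $\alpha\ge 0$ if and only if the sum of its coefficients is $1$. Indeed, if the coefficient sum is $1$, exactly one coefficient is $1$ and all others vanish, forcing the polynomial to be $q^{\alpha}$ for the unique exponent $\alpha$ carrying that coefficient (and $\alpha\ge 0$ since $\Rc,\Sc,\Rc',\Sc'\in\Z[q]$ with no negative powers, being polynomials); conversely $q^{\alpha}$ has coefficient sum $1$. Combining this with the previous paragraph: $\Rc\Sc'-\Sc\Rc'=q^{\alpha}$ for some $\alpha\ge 0$ if and only if $rs'-r's=1$. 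This is exactly the claimed equivalence.

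One point that needs a brief justification is that $\Rc$, $\Sc$, $\Rc'$, $\Sc'$ are genuine polynomials (no negative powers of $q$), so that $\cX$ is a polynomial and the exponent $\alpha$ is automatically $\ge 0$; this follows from Definition~\ref{qDefn}(b) and formula~\eqref{qc}, which after clearing denominators produces only nonnegative powers of $q$, together with the normalization in~\eqref{RcSc}. There is no serious obstacle here: the entire content of Corollary~\ref{voisin} is packed into Theorem~\ref{PosPropBis}, and the corollary is just the $q=1$ specialization combined with the trivial characterization of monomials among polynomials with nonnegative coefficients. The only mild subtlety worth flagging is making sure the direction "$rs'-r's=1 \Rightarrow \Rc\Sc'-\Sc\Rc'=q^\alpha$" genuinely uses positivity of the coefficients (Theorem~\ref{PosPropBis}) and not merely the evaluation at $1$ — without positivity, coefficient sum $1$ would not force a monomial.
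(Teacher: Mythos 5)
Your proof is correct and follows exactly the paper's intended route: the authors state that Corollary~\ref{voisin} "can be deduced by evaluating the polynomials at $q=1$" from Theorem~\ref{PosPropBis}, which is precisely your argument (positivity of the coefficients of $\Rc\Sc'-\Sc\Rc'$ plus coefficient sum $rs'-r's$ forces a monomial exactly when $rs'-r's=1$). You have merely written out the details the paper leaves implicit, including the correct observation that the positivity from Theorem~\ref{PosPropBis} is essential for the nontrivial direction.
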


The exact value of $\alpha$ is given in Proposition \ref{FormVoisin}.

\begin{rem}
The combinatorial viewpoint adopted in this paper makes it necessary to complete
the set of rationals with the ``greatest element'' $\infty=\frac{1}{0}$.
Its $q$-deformation is set $\left[\frac{1}{0}\right]_{q}:=\frac{1}{0}$.
Theorem~\ref{PosPropBis} then implies Proposition~\ref{PosProp},
when applied to~$\frac{r}{s}=\frac{1}{0}$, or~$\frac{r'}{s'}=\frac{0}{1}$.
\end{rem}

%%%%%%%%%%%%%%%%%%%%
\subsection{Basic properties of the polynomials~$\Rc$ and~$\Sc$}\label{propRS}
%%%%%%%%%%%%%%%%%%%%

Let $(c_{i})_{1\leq i \leq k}$ be a sequence of integers greater or equal to 2. Consider two sequences $(\Rc_i)_{i\geq 0}$ and $(\Sc_i)_{i\geq 0}$ of polynomials in $q$ defined by the following recursions
\begin{equation}
\label{LinRecEq}
\begin{array}{rcl}
\Rc_{i+1} &=& [c_{i+1}]_q\Rc_{i}-q^{c_i-1}\Rc_{i-1}, \\[4pt]
\Sc_{i+1} &=& [c_{i+1}]_q\Sc_{i}-q^{c_i-1}\Sc_{i-1},
\end{array}
\end{equation}
with the initial conditions $( \Rc_{0},\,\Rc_1)=(1,\,[c_1]_q)$ and $(\Sc_{0},\,\Sc_1)=(0,1)$.

The continued fraction $\llbracket{}c_1,\ldots,c_k\rrbracket_{q}$ given by~\eqref{qc}
 belongs to the class of ``generalized continued fractions''. 
 For this class it is well known that the  numerators and denominators  of the convergents
 satisfy recurrence relations given by the coefficients of the continued fractions. 
 In our case, one obtains the recursions~\eqref{LinRecEq},
 and therefore 
 $$
\llbracket{}c_1,\ldots,c_i\rrbracket_{q}=\frac{\Rc_i(q)}{\Sc_i(q)},
$$
for all $1\leq i\leq k$.

\begin{prop}
\label{Prop16}
For all $1\leq i\leq k$,

(i)
the leading term of $\Rc_{i}$ is $q^{c_1+\ldots +c_i-i}$ and the constant term is 1;

(ii)
the leading term of $\Sc_{i}$ is $q^{c_2+\ldots +c_i-i+1}$ and the constant term is 1;

(iii) 
$\Rc_{i}$ and $\Sc_{i}$ have positive integer coefficients;

(iv)
$\Rc_{i}$ and $\Sc_{i}$ are coprime.

\end{prop}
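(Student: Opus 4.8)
The plan is to prove all four statements simultaneously by induction on $i$, since they are intertwined through the recursion \eqref{LinRecEq}. The base cases $i=0,1$ are immediate from the initial conditions: $\Rc_0=1$, $\Rc_1=[c_1]_q=1+q+\cdots+q^{c_1-1}$ has leading term $q^{c_1-1}$ and constant term $1$; $\Sc_0=0$, $\Sc_1=1$ has leading term $q^0$ (matching $c_2+\cdots+c_i-i+1$ with $i=1$, the empty sum being $0$) and constant term $1$; positivity and coprimality ($\gcd(1,[c_1]_q)=1$, $\gcd(0,1)=1$) are clear.

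For the inductive step, assume (i)--(iv) hold up to index $i$, and examine $\Rc_{i+1}=[c_{i+1}]_q\Rc_i - q^{c_i-1}\Rc_{i-1}$ (and likewise for $\Sc$). For the leading term in (i): by induction $[c_{i+1}]_q\Rc_i$ has degree $(c_{i+1}-1)+(c_1+\cdots+c_i-i) = c_1+\cdots+c_{i+1}-(i+1)$ with leading coefficient $1$, while $q^{c_i-1}\Rc_{i-1}$ has degree $(c_i-1)+(c_1+\cdots+c_{i-1}-(i-1)) = c_1+\cdots+c_i-i$, which is strictly smaller provided $c_{i+1}-1\geq 1$, i.e. $c_{i+1}\geq 2$ — exactly our hypothesis. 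For the constant term: the constant term of $[c_{i+1}]_q\Rc_i$ is $1\cdot 1=1$, and since $c_i-1\geq 1$ the term $q^{c_i-1}\Rc_{i-1}$ contributes nothing to the constant coefficient, so the constant term of $\Rc_{i+1}$ is $1$. The same computation gives (ii) for $\Sc$, using that the degree of $\Sc_i$ is $c_2+\cdots+c_i-i+1$. This takes care of (i) and (ii).

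For (iii), positivity of $\Rc_{i+1}$ is the delicate point, because the recursion has a minus sign: we must show the positive contributions from $[c_{i+1}]_q\Rc_i$ dominate the subtracted $q^{c_i-1}\Rc_{i-1}$ coefficient by coefficient. The clean way is to record a stronger inductive hypothesis relating $\Rc_i$ and $\Rc_{i-1}$ — concretely, that $\Rc_i - q^{c_i-1}\Rc_{i-1}$ already has nonnegative coefficients (equivalently, that $\Rc_i$ coefficientwise dominates $q^{c_i-1}\Rc_{i-1}$). Granting this, write $[c_{i+1}]_q\Rc_i - q^{c_i-1}\Rc_{i-1} = (1+q+\cdots+q^{c_{i+1}-1})\Rc_i - q^{c_i-1}\Rc_{i-1} = \Rc_i + (q+\cdots+q^{c_{i+1}-1})\Rc_i - q^{c_i-1}\Rc_{i-1}$; the first and last terms combine to something nonnegative by the strengthened hypothesis, and the middle term is manifestly nonnegative. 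One then also needs to propagate the strengthened hypothesis, i.e. check that $\Rc_{i+1} - q^{c_{i+1}-1}\Rc_i$ has nonnegative coefficients — this unwinds to $(1+\cdots+q^{c_{i+1}-1}-q^{c_{i+1}-1})\Rc_i - q^{c_i-1}\Rc_{i-1} = (1+q+\cdots+q^{c_{i+1}-2})\Rc_i - q^{c_i-1}\Rc_{i-1}$, which is nonnegative as soon as $c_{i+1}\geq 2$ makes $(1+\cdots+q^{c_{i+1}-2})$ have constant term $1$, again by the hypothesis on $\Rc_i$ versus $q^{c_i-1}\Rc_{i-1}$. The identical argument works for $\Sc$. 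I expect setting up and threading this strengthened positivity hypothesis to be the main obstacle; everything else is bookkeeping.

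Finally (iv): coprimality of $\Rc_{i+1}$ and $\Sc_{i+1}$ follows from the standard continued-fraction argument. From the recursion, the matrix identity
\begin{equation*}
\begin{pmatrix}\Rc_{i+1}&\Rc_i\\ \Sc_{i+1}&\Sc_i\end{pmatrix}
=
\begin{pmatrix}\Rc_i&\Rc_{i-1}\\ \Sc_i&\Sc_{i-1}\end{pmatrix}
\begin{pmatrix}[c_{i+1}]_q&1\\ -q^{c_i-1}&0\end{pmatrix}
\end{equation*}
shows that $\Rc_{i+1}\Sc_i - \Rc_i\Sc_{i+1}$ equals $(-1)$ times $q^{c_i-1}$ times $\Rc_i\Sc_{i-1}-\Rc_{i-1}\Sc_i$, so by induction (starting from $\Rc_1\Sc_0-\Rc_0\Sc_1 = -1$) this Wronskian-type quantity is $\pm q^{c_1+\cdots+c_{i}-i}$, a monomial. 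Hence any common divisor of $\Rc_{i+1}$ and $\Sc_{i+1}$ in $\Z[q]$ divides a power of $q$; but both have constant term $1$ by parts (i) and (ii), so no positive power of $q$ divides either, forcing the gcd to be a unit. This completes the induction.
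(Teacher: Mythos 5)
Your proof is correct and follows essentially the same route as the paper: the same strengthened induction hypothesis (that $\Rc_i - q^{c_i-1}\Rc_{i-1}$ has nonnegative coefficients, propagated alongside positivity of $\Rc_i$ itself) for part (iii), and the same Wronskian/determinant computation reducing coprimality to the fact that $\Rc_{i}\Sc_{i-1}-\Rc_{i-1}\Sc_{i}$ is a monomial in $q$ while both polynomials have constant term $1$. The only quibble is a harmless sign slip in (iv): the matrix identity gives $\Rc_{i+1}\Sc_i-\Rc_i\Sc_{i+1}=+q^{c_i-1}\left(\Rc_i\Sc_{i-1}-\Rc_{i-1}\Sc_i\right)$ rather than $-q^{c_i-1}(\cdots)$, but since you only use that this quantity is $\pm$ a power of $q$, the conclusion is unaffected.
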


\begin{proof}
(i) and (ii) are obtained by easy induction.

Assume that $\Rc_{i}$ and $\Rc_{i}-q^{c_i-1}\Rc_{i-1}$ have positive integer coefficients for some fixed index $i$.
From the recursion one gets
$$
\Rc_{i+1}=q[c_{i+1}-1]_q\Rc_{i}+(\Rc_{i}-q^{c_i-1}\Rc_{i-1}),
$$
which implies that $\Rc_{i+1}$ also have positive integer coefficients, and one gets
$$
\Rc_{i+1}-q^{c_{i+1}-1}\Rc_{i}=q[c_{i+1}-2]_q\Rc_{i}+(\Rc_{i}-q^{c_i-1}\Rc_{i-1}),
$$
which implies that $
\Rc_{i+1}-q^{c_{i+1}-1}\Rc_{i}$ also have positive integer coefficients. 
(iii) follows then by induction.

From the following $2\times 2$ determinant computations
$$
\begin{vmatrix}
\Rc_{i+1}&\Rc_{i}\\[4pt]
\Sc_{i+1}&\Sc_{i}
\end{vmatrix}
=
\begin{vmatrix}
-q^{c_i-1}\Rc_{i-1}&\Rc_{i}\\[4pt]
-q^{c_i-1}\Sc_{i-1}&\Sc_{i}
\end{vmatrix}
=q^{c_i-1}
\begin{vmatrix}
\Rc_{i}&\Rc_{i-1}\\[4pt]
\Sc_{i}&\Sc_{i-1}
\end{vmatrix}
=
\ldots
=-q^{c_1+\ldots +c_i-i}
$$
one deduces that the only possible common divisors to $\Rc_{i}$ and $\Sc_{i}$ are powers of $q$. 
Since the polynomials have constant terms 1 they are not divisible by a power of $q$.
Hence (iv).
\end{proof}

In particular, for $i=k$ one obtains that $\Rc_{k}$ and $\Sc_{k}$ are precisely the polynomials $\Rc_{}$ and $\Sc_{}$ of \eqref{RcSc}, 
and one deduces the following.

\begin{cor}
\label{DegRSProp}
Let $\frac{\Rc(q)}{\Sc(q)}=[a_{1}, \ldots, a_{2m}]_{q}=\llbracket{}c_1,\ldots,c_k\rrbracket_{q}$.

(i)
The degrees of the polynomials ${\Rc}$ and ${\Sc}$ in~\eqref{RcSc} are 
$$
\begin{array}{rclcl}
\deg(\Rc)&=&c_{1}+\ldots+c_{k}-k&=&a_{1}+\ldots +a_{2m}-1,\\[4pt]
\deg(\Sc)&=&c_2+\ldots+c_{k}-k+1&=&a_{2}+\ldots +a_{2m}-1.
\end{array}
$$

(ii)
The coefficients of the leading terms of $\Rc$ and $\Sc$ are equal to~$1$.

(iii)
The constant terms of $\Rc$ and $\Sc$ are equal to~$1$, so that
$\Rc(0)=\Sc(0)=1$ and $\Rc(1)=r,\,\Sc(1)=s$.
\end{cor}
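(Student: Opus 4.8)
The plan is to reduce the whole statement to the Proposition just proved, specialized to $i=k$, together with the classical transition formula between the regular and negative continued fraction expansions of a rational number.

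First I would invoke the remark made immediately before the statement: the polynomials $\Rc$ and $\Sc$ of~\eqref{RcSc} are exactly $\Rc_k$ and $\Sc_k$, the last numerator and denominator produced by the recursion~\eqref{LinRecEq}. Consequently items~(ii) and~(iii), as well as the expressions for $\deg(\Rc)$ and $\deg(\Sc)$ in terms of the $c_i$, are immediate consequences of parts~(i) and~(ii) of the preceding Proposition applied with $i=k$: the leading term of $\Rc_k$ is $q^{c_1+\ldots+c_k-k}$ and the leading term of $\Sc_k$ is $q^{c_2+\ldots+c_k-k+1}$, both leading coefficients equal $1$, and both constant terms equal $1$.

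It remains to prove the two scalar identities
\begin{equation*}
c_1+\ldots+c_k-k = a_1+\ldots+a_{2m}-1,
\qquad
c_2+\ldots+c_k-k+1 = a_2+\ldots+a_{2m}-1,
\end{equation*}
which only involve the ordinary (non-deformed) continued fractions. Here I would use the explicit transition formula recalled in Section~\ref{Trs} (cf.~\cite{Hir,HZ}): if $\frac{r}{s}=[a_1,\ldots,a_{2m}]=\llbracket c_1,\ldots,c_k\rrbracket$, then
\begin{equation*}
\llbracket c_1,\ldots,c_k\rrbracket
=\llbracket a_1+1,\ \underbrace{2,\ldots,2}_{a_2-1},\ a_3+2,\ \underbrace{2,\ldots,2}_{a_4-1},\ \ldots,\ a_{2m-1}+2,\ \underbrace{2,\ldots,2}_{a_{2m}-1}\rrbracket .
\end{equation*}
Counting the entries of the right-hand side gives $k=a_2+a_4+\ldots+a_{2m}$, and summing them gives $c_1+\ldots+c_k=a_1+2a_2+a_3+2a_4+\ldots+a_{2m-1}+2a_{2m}-1$. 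Subtracting $k$ yields the first identity; the second then follows from $c_2+\ldots+c_k-k+1=(c_1+\ldots+c_k-k)-c_1+1$ together with $c_1=a_1+1$.

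This argument is pure bookkeeping, so there is no genuine obstacle. The only point that requires a little care is the degenerate case of the transition formula in which one or more of the even-indexed coefficients $a_{2j}$ equals $1$: then the corresponding block $\underbrace{2,\ldots,2}_{a_{2j}-1}$ is empty and contributes nothing, and one should check that the length and sum counts above remain valid in that case --- which they do. If one prefers to avoid the transition formula altogether, the $a$-expressions for $\deg(\Rc)$ and $\deg(\Sc)$ can instead be obtained directly by induction on $m$ from the structure of~\eqref{qa}, but that route is longer.
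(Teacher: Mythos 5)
Your proposal is correct and follows essentially the same route as the paper: the paper deduces the Corollary directly from the Proposition at $i=k$ (stating only ``one deduces the following''), with the $a$-side degree formulas implicitly resting on the conversion formula~\eqref{HZRegEqt}. Your explicit bookkeeping of $k=a_2+a_4+\ldots+a_{2m}$ and of the sum $c_1+\ldots+c_k$, including the remark about empty blocks when some $a_{2j}=1$, simply fills in details the paper omits.
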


Let us mention that
Parts (ii) and (iii) of the corollary can also be deduced from the
$F$-polynomial interpretation (see Corollary~\ref{FPolCor}),
and, in this sense, this is an instance of the general theorem~\cite{DWZ}
that $F$-polynomials of cluster variables have constant term~$1$
and the leading coefficient~$1$.

%%%%%%%%%%%%%%%%%%%%
\subsection{The special value~$q=-1$}\label{Valueq-1}
%%%%%%%%%%%%%%%%%%%%

The value~$q=-1$ is special.
In this case we have the following.

\begin{prop}
\label{q-1Prop}
(i)
For every rational~$\frac{r}{s}$, the corresponding polynomials~$\Rc$ and~$\Sc$ evaluated at~$q=-1$ can take only three values:
$-1,0$, or~$1$.

(ii)
$\Rc(-1)=0$ if and only if $r$ is even.

(iii)
$\Sc(-1)=0$ if and only if $s$ is even.
\end{prop}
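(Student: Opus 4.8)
The plan is to exploit the recursions \eqref{LinRecEq} for the convergents $\Rc_i,\Sc_i$ of the negative continued fraction $\llbracket c_1,\ldots,c_k\rrbracket_q$, evaluating everything at $q=-1$. First I would record the trivial observation that for an integer $c\ge 2$ one has $[c]_q=1+q+\cdots+q^{c-1}$, hence $[c]_{-1}$ equals $1$ if $c$ is odd and $0$ if $c$ is even; likewise $(-1)^{c-1}$ equals $1$ if $c$ is odd and $-1$ if $c$ is even. So at $q=-1$ the recursion \eqref{LinRecEq} becomes one of two very simple two-term recursions depending only on the parity of the $c_i$: writing $\varepsilon_i:=(-1)^{c_i-1}\in\{+1,-1\}$ and $\delta_i:=[c_i]_{-1}\in\{0,1\}$ (note $\delta_i=1$ iff $\varepsilon_i=1$ iff $c_i$ odd), we get
$$
\Rc_{i+1}(-1)=\delta_{i+1}\,\Rc_i(-1)-\varepsilon_i\,\Rc_{i-1}(-1),
$$
and the same for $\Sc$. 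Starting from $(\Rc_0(-1),\Rc_1(-1))=(1,\delta_1)$ and $(\Sc_0(-1),\Sc_1(-1))=(0,1)$.

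For part (i) I would show by induction on $i$ that the pair $(\Rc_i(-1),\Rc_{i-1}(-1))$ always lies in the finite set of pairs with entries in $\{-1,0,1\}$; the point is that if $\delta_{i+1}=0$ then $\Rc_{i+1}(-1)=-\varepsilon_i\Rc_{i-1}(-1)\in\{-1,0,1\}$ automatically, while if $\delta_{i+1}=1$ (so $\varepsilon_{i+1}=1$) we need to check that $\Rc_i(-1)-\varepsilon_i\Rc_{i-1}(-1)$ cannot be $\pm 2$. The cleanest way to get this, and incidentally the cleanest way to nail down parts (ii) and (iii) as well, is to prove a sharper bookkeeping statement: $\Rc_i(-1)$ and $\Sc_i(-1)$ are never simultaneously zero, and in fact the pair is never $(0,0)$, and moreover $\Rc_i(-1)^2+\Sc_i(-1)^2+\Rc_i(-1)\Rc_{i-1}(-1)+\Sc_i(-1)\Sc_{i-1}(-1)$-type quantities stay bounded — but I think the slickest route avoids ad hoc invariants. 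Instead, recall from the Proposition just proved that the determinant $\Rc_i\Sc_{i-1}-\Sc_i\Rc_{i-1}=q^{c_1+\cdots+c_i-i}$; evaluating at $q=-1$ gives $\Rc_i(-1)\Sc_{i-1}(-1)-\Sc_i(-1)\Rc_{i-1}(-1)=\pm 1$. In particular the integer vectors $(\Rc_i(-1),\Sc_i(-1))$ and $(\Rc_{i-1}(-1),\Sc_{i-1}(-1))$ form a basis of $\Z^2$, so they are primitive vectors and consecutive ones are never proportional. Combined with the two-term recursion at $q=-1$, whose matrix is $\begin{pmatrix}\delta_{i+1}&-\varepsilon_i\\[2pt]1&0\end{pmatrix}$, an element of $\mathrm{GL}(2,\Z)$, and starting from a vector in $\{-1,0,1\}^2$, I would argue inductively that the orbit stays in $\{-1,0,1\}^2$: one checks case by case on the four possibilities for $(\delta_{i+1},\varepsilon_i)$ that the image of any $(x,y)\in\{-1,0,1\}^2$ with $\gcd$ condition (i.e.\ not $(0,0)$, and compatible with the running determinant $\pm1$) again lands in $\{-1,0,1\}^2$ — the determinant constraint is exactly what rules out the bad value $\pm2$.

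For (ii) and (iii) I would instead take a cleaner, more structural route using the matrix presentation (point (c) of the introduction, and the $q$-continuant formulas). The value $q=-1$ of the polynomials $\Rc,\Sc$ is obtained by multiplying the $q$-deformed elementary matrices $R_q,L_q$ at $q=-1$; at $q=-1$ these become honest matrices in $\mathrm{GL}(2,\Z)$ with a controlled reduction mod $2$. Concretely, $\Rc(-1)$ and $\Sc(-1)$ are the entries of a product of matrices, and reducing the whole product modulo $2$, one sees that $\bigl(\Rc(-1),\Sc(-1)\bigr)\equiv (r,s)\pmod 2$: indeed $\Rc(1)=r$, $\Sc(1)=s$, and $R_{-1}\equiv R_1$, $L_{-1}\equiv L_1\pmod 2$, since $q\equiv q^{-1}\equiv 1\pmod 2$ and the off-diagonal entries are unchanged. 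Hence $\Rc(-1)\equiv r$ and $\Sc(-1)\equiv s\pmod 2$. Since by (i) the left-hand sides are in $\{-1,0,1\}$, we conclude $\Rc(-1)=0$ forces $r$ even and, conversely, if $r$ is even then $\Rc(-1)$ is even, hence $0$; similarly for $s$. This simultaneously gives (ii) and (iii). The one thing I would double-check is that the passage "product at $q=-1$ reduces mod $2$ to the product at $q=1$" is literally valid for the specific matrix words used to compute $\Rc,\Sc$ — this is where I'd be careful, since the $q$-continued-fraction matrices involve both $q$ and $q^{-1}$, but modulo $2$ both are $1$, so there is no obstruction.

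The main obstacle I anticipate is purely organizational: ruling out the values $\pm 2$ in part (i). Everything hinges on having the right invariant, and the cleanest one is the determinant identity $\Rc_i\Sc_{i-1}-\Sc_i\Rc_{i-1}=\pm q^{c_1+\cdots+c_i-i}$ from the previous proposition, which at $q=-1$ says consecutive convergent-vectors form a $\Z$-basis; the boundedness then drops out by a finite case check on $\mathrm{GL}(2,\Z)$ acting on $\{-1,0,1\}^2$. Once (i) is in place, parts (ii)–(iii) are immediate from the mod-$2$ reduction together with $\Rc(1)=r$, $\Sc(1)=s$.
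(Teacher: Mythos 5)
Your reduction of parts (ii) and (iii) to part (i) is correct and in fact slicker than the paper's: since $\Rc,\Sc\in\Z[q]$, one has $P(-1)\equiv P(1)\pmod 2$ for every $P\in\Z[q]$, hence $\Rc(-1)\equiv r$ and $\Sc(-1)\equiv s\pmod 2$ directly, with no need to inspect the matrix words at all (the paper instead runs a separate induction over the weighted Farey sum for (ii)--(iii)). The problem is part (i), on which everything else rests. The inductive invariant you propose --- entries of $\bigl(\Rc_{i-1}(-1),\Rc_i(-1)\bigr)$ and $\bigl(\Sc_{i-1}(-1),\Sc_i(-1)\bigr)$ in $\{-1,0,1\}$, nonvanishing, and $\Rc_i(-1)\Sc_{i-1}(-1)-\Sc_i(-1)\Rc_{i-1}(-1)=\pm 1$ --- is \emph{not} strong enough to propagate. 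Concretely, take
$$
\bigl(\Rc_{i-1}(-1),\Rc_i(-1)\bigr)=(-1,1),\qquad \bigl(\Sc_{i-1}(-1),\Sc_i(-1)\bigr)=(1,0),
$$
which satisfies every one of your constraints (the determinant equals $1$), and suppose $c_i$ and $c_{i+1}$ are both odd, so that $\varepsilon_i=\delta_{i+1}=1$. The recursion then gives $\Rc_{i+1}(-1)=\Rc_i(-1)-\Rc_{i-1}(-1)=2$. So the case check you describe does not close, and the claim that the determinant constraint is ``exactly what rules out the bad value $\pm2$'' is false: this state happens to be unreachable, but nothing in your invariant records that, so the induction step fails on it.

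The missing idea is that the reachable states form a \emph{finite} set, and the paper gets this for free from the matrix presentation: at $q=-1$ one has $R_{-1}^2=S_{-1}^2=\Id$ and $(R_{-1}S_{-1})^3=\Id$, so each factor $R_{-1}^{c_j}S_{-1}$ of $M_{-1}(c_1,\ldots,c_k)=R_{-1}^{c_1}S_{-1}\cdots R_{-1}^{c_k}S_{-1}$ equals $S_{-1}$ or $R_{-1}S_{-1}$ according to the parity of $c_j$, and the entire product lies in the order-six group generated by $R_{-1}$ and $S_{-1}$, all of whose elements have entries in $\{-1,0,1\}$; since $\Rc(-1)$ and $\Sc(-1)$ are the first-column entries of $M_{-1}(c_1,\ldots,c_k)$ by Proposition~\ref{FacMM}, part (i) follows. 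If you insist on your recursion-based bookkeeping, the invariant must be strengthened from ``small entries and determinant $\pm1$'' to membership of the matrix $\begin{pmatrix}\Rc_i&\Rc_{i-1}\\ \Sc_i&\Sc_{i-1}\end{pmatrix}$ evaluated at $q=-1$ in an explicit finite list, which is the same finiteness statement in disguise. With part (i) repaired this way, your mod-$2$ argument does give (ii) and (iii) cleanly.
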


We will prove this statement in Section~\ref{Proof-1Sec}.

A corollary of Proposition~\ref{q-1Prop} Part (ii) is that, for even~$r$, the polynomial~$\Rc(q)$ is a multiple of~$(1+q)$,
and similarly for~$\Sc(q)$.

%%%%%%%%%%%%%%%%%%%%
%%%%%%%%%%%%%%%%%%%%
\section{Weighted triangulations and $q$-deformed continued fractions}\label{WTSec}
%%%%%%%%%%%%%%%%%%%%
%%%%%%%%%%%%%%%%%%%%

The goal of this section is to give an alternative, recurrent, way to define $q$-deformations of continued fractions.
We construct a combinatorial model for calculating the $q$-continued fractions
based on a simple triangulation of an $n$-gon.
The main difference comparing with the classical continued fractions is that the edges of the corresponding 
triangulation are weighted.
We show that the weights associated to $q$-deformed continued fractions can be read directly in the Farey graph.

%%%%%%%%%%%%%%%%%%%%
\subsection{Continued fractions and the triangulations~$\mathbb{T}_{r/s}$}\label{Trs}
%%%%%%%%%%%%%%%%%%%%
We briefly recall a relationship between
continued fractions and triangulations of $n$-gons
(for more details; see~\cite{FB1}).

Consider the following ``wrinkled triangulation'' of an $n$-gon with exactly two exterior triangles:
$$
\xymatrix @!0 @R=0.8cm @C=1.3cm
{
&c_1\ar@{-}[ldd]\ar@{-}[dd]\ar@{-}[rdd]\ar@{-}[rrdd]\ar@{-}[r]\ar@/^0.8pc/@{<->}[rrr]^{a_2}
&c_2\ar@{-}[r]
\ar@{-}[rdd]&c_3\ar@{-}[r]\ar@{-}[dd]
&\bullet\ar@{-}[r]\ar@{-}[ldd]\ar@{-}[dd]\ar@{-}[rdd]\ar@{-}[r]\ar@/^0.8pc/@{<->}[rr]^{a_4}
&\bullet\ar@{--}[rr]\ar@{-}[dd]&& c_k\ar@{-}[r]\ar@{-}[dd]& c_{k+1}\ar@{-}[ldd]\\
\\
c_0\ar@{-}[r]\ar@/_0.8pc/@{<->}[rrr]_{a_1}
&c_{n-1}\ar@{-}[r]&c_{n-2}\ar@{-}[r]
&\bullet\ar@{-}[r]\ar@/_0.8pc/@{<->}[rr]_{a_3}
&\bullet\ar@{-}[r]&\ar@{--}[rr]&&c_{k+2}&
}
$$
Every wrinkled triangulation is defined by a sequence of positive integers,
$(a_1,\ldots,a_{2m})$.
The integers $a_i$ count the number of consecutive equally positioned triangles
(i.e., the triangles ``base-down'', or ``base-up''),
cf. Example~\ref{ExT75} below.

A wrinkled triangulation defines another sequence of positive integers, $(c_1,\ldots,c_n)$.
The integer $c_i$ counts the number of triangles incident at the vertex~$i$.

Let $k$ be the number of vertices between the exterior vertices.
The following statement is a reformulation of a formula from~\cite{Hir,HZ}.
{\it The regular and negative continued fractions defined by the sequences 
$(a_1,\ldots,a_{2m})$ and $(c_1,\ldots,c_k)$ coming from a triangulation, are equal to each other:}
$$
[a_1,\ldots,a_{2m}]=
\llbracket{}c_1,\ldots,c_k\rrbracket.
$$

Equivalently, the explicit formula of conversion for the coefficients of regular and
negative continued fractions is:
\begin{equation}
\label{HZRegEqt}
(c_1,\ldots,c_k)=
\big(a_1+1,\underbrace{2,\ldots,2}_{a_2-1},\,
a_3+2,\underbrace{2,\ldots,2}_{a_4-1},\ldots,
a_{2m-1}+2,\underbrace{2,\ldots,2}_{a_{2m}-1}\big).
\end{equation}

\noindent
{\bf Notation}.
The constructed triangulation will be denoted by~$\mathbb{T}_{r/s}$, where~$\frac{r}{s}$ is the rational number
defined by the above continued fractions, i.e., $\frac{r}{s}:=[a_1,\ldots,a_{2m}]=
\llbracket{}c_1,\ldots,c_k\rrbracket$.

\begin{rem}
Note that the number~$n$ is related with the coefficients of the continued fractions by
$$
\begin{array}{rcl}
n &=& a_1+a_2+\cdots+a_{2m}+2\\[4pt]
&=& c_1+c_2+\cdots+c_k-k+3.
\end{array}
$$
\end{rem}

%%%%%%%%%%%%%%%%%%%%
\subsection{The Farey sum rule}\label{TrsFS}
%%%%%%%%%%%%%%%%%%%%
The triangulation~$\mathbb{T}_{r/s}$ encodes the full sequence of convergents
of the continued fractions via the following {\it Farey sum} rule:
\begin{equation}
\label{FarSEq}
\frac{r}{s}\oplus\frac{r'}{s'}
:=\frac{r+r'}{s+s'}.
\end{equation}
We label the vertices of the $n$-gon by rationals
starting from $\frac{0}{1}$ and $\frac{1}{0}$
at vertices $0$ and $1$, and extending this labeling 
by the rule~\eqref{FarSEq}.
$$
\xymatrix @!0 @R=0.8cm @C=1.5cm
{
&\frac{1}{0}\ar@{-}[ldd]\ar@{-}[dd]
&\frac{r}{s}\ar@{-}[r]
\ar@{-}[rdd]&\frac{r+r'}{s+s'}\ar@{-}[dd]&&\frac{r}{s}\ar@{-}[dd]\ar@{-}[rdd]
&\\
\\
\frac{0}{1}\ar@{-}[r]
&\frac{1}{1}&&\frac{r'}{s'}
&&\frac{r'}{s'}\ar@{-}[r]
&\frac{r+r'}{s+s'}
}
$$

The following statement is equivalent to a theorem of C.~Series; see~\cite{Ser}, 
and it was first formulated in the context of hyperbolic geometry
and a Farey tessellation:
{\it the vertex $k+1$ of the triangulation~$\mathbb{T}_{r/s}$ has the label~$\frac{r}{s}$.}

\begin{ex}
\label{ExT75}
The rationals
$\frac{5}{2}=[2,2],\,\frac{5}{3}=[1,1,1,1],\,\frac{7}{5}=[1,2,1,1]$
correspond to the following triangulations
$$
\xymatrix @!0 @R=0.8cm @C=1.3cm
{
&\frac{1}{0}\ar@{-}[ldd]\ar@{-}[dd]\ar@{-}[r]\ar@{-}[rdd]
&\frac{3}{1}\ar@{-}[r]\ar@{-}[dd]
&\frac{5}{2}\ar@{-}[ldd]
\\
\\
\frac{0}{1}\ar@{-}[r]
&\frac{1}{1}\ar@{-}[r]
&\frac{2}{1}
}
\xymatrix @!0 @R=0.8cm @C=1.3cm
{
&\frac{1}{0}\ar@{-}[ldd]\ar@{-}[dd]\ar@{-}[r]
&\frac{2}{1}\ar@{-}[ldd]\ar@{-}[r]\ar@{-}[dd]
&\frac{5}{3}\ar@{-}[ldd]
\\
\\
\frac{0}{1}\ar@{-}[r]
&\frac{1}{1}\ar@{-}[r]
&\frac{3}{2}
}
\xymatrix @!0 @R=0.8cm @C=1.3cm
{
&\frac{1}{0}\ar@{-}[ldd]\ar@{-}[dd]\ar@{-}[r]
&\frac{2}{1}\ar@{-}[ldd]\ar@{-}[r]
&\frac{3}{2}\ar@{-}[lldd]\ar@{-}[ldd]\ar@{-}[r]
&\frac{7}{5}\ar@{-}[lldd]\\
\\
\frac{0}{1}\ar@{-}[r]
&\frac{1}{1}\ar@{-}[r]
&\frac{4}{3}
}
$$
The coefficients in the negative continued fraction expansions
$\frac{5}{2}=\llbracket{}3,2\rrbracket,\,
\frac{5}{3}=\llbracket{}2,3\rrbracket,\,
\frac{7}{5}=\llbracket{}2,2,3\rrbracket$
are the numbers of the triangles attached to each vertex on the top line.
\end{ex}

%%%%%%%%%%%%%%%%%%%%
\subsection{Weighted triangulations~$\mathbb{T}^q_{r/s}$}\label{WeTr}
%%%%%%%%%%%%%%%%%%%%
Given a rational~$\frac{r}{s}$ and the corresponding triangulation~$\mathbb{T}_{r/s}$,
we define the {\it weighted triangulation} $\mathbb{T}^q_{r/s}$. 
Both, the vertices and the edges of~$\mathbb{T}^q_{r/s}$ are assigned a weight.

The edges of the triangulation are assigned a weight, 
which is a power of~$q$, while the vertices are labeled with $q$-rationals.

\begin{defn}
\label{WeightDef}
The edges are assigned weights according to the following rules.

(a)
The edges at the vertex with number~$1$ have the following weights:
$$
\xymatrix @!0 @R=0.8cm @C=1cm
{
&1\ar@{-}[ldd]_{q^{-1}}\ar@{-}[dd]^1\ar@{-}[rrr]^{q^{{c_1}-1}}\ar@{-}[rdd]^>>>>>q\ar@{-}[rrrdd]^{q^{{c_1}-2}}
&&&2
\\
\\
\bullet
&\bullet&\bullet&\cdots&\bullet
}
$$

(b)
The edges at an upper vertex with number~$i\geq2$ have the weights
defined according to the following rule:
$$
\xymatrix @!0 @R=0.8cm @C=1cm
{
i-1\ar@{-}[rrr]^{q^{c_{i-1}-1}}
&&&i\ar@{-}[ldd]^{q^2}\ar@{-}[lldd]^q\ar@{-}[llldd]_1\ar@{-}[rr]^{q^{c_{i}-1}}
\ar@{-}[rdd]^{q^{c_{i}-2}}
&&i+1
\\
\\
\bullet
&\bullet
&\bullet
&\cdots&\bullet
}
$$

(c)
All the edges between the bottom vertices have weight~$1$.

\end{defn}

%We will use the notation~$\mathbb{T}^q_{r/s}$ for the triangulation~$\mathbb{T}_{r/s}$ 
%equipped with the weights from Definition~\ref{WeightDef}.

\begin{rem}
(a)
The sides of the ``initial'' triangle $(\frac{0}{1},\frac{1}{1},\frac{1}{0})$ have weight~$1,1$ and~$q^{-1}$.
$$
\xymatrix @!0 @R=0.8cm @C=1cm
{
&\frac{1}{0}\ar@{-}[ldd]_{q^{-1}}\ar@{-}[dd]^1
\\
\\
\frac{0}{1}\ar@{-}[r]_1
&\frac{1}{1}
}
$$
Note that the side of weight~$q^{-1}$ is not used in the computation of the continued fractions.
The weight~$q^{-1}$ will become clear in Section~\ref{qFSBSec}.

(b)
It follows from Definition~\ref{WeightDef} that the edges at a bottom vertex have the following weights:
$$
\xymatrix @!0 @R=0.8cm @C=1cm
{
&\bullet\ar@{-}[rdd]_{q^\ell}
&\bullet\ar@{-}[dd]_{1}
&\bullet\ar@{-}[ldd]^1&\cdots&\bullet\ar@{-}[llldd]^1
\\
\\
\bullet\ar@{-}[rr]_1
&&\bullet\ar@{-}[rr]_1&&
}
$$
In particular, at most one edge at a bottom vertex is of weight different from~$1$.

(c)
Sides of every triangle have the weights $(1,q^\ell,q^{\ell-1})$:
$$
\xymatrix @!0 @R=0.8cm @C=1.5cm
{
\bullet\ar@{-}[rr]^{q^{\ell-1}}
\ar@{-}[rdd]_1&&\bullet\ar@{-}[ldd]^{q^\ell}
\\
\\
&\bullet
}
$$
for some~$\ell\geq1$.
\end{rem}

\begin{ex}
\label{ExT75W}
The weighted triangulations $\mathbb{T}^q_{5/2}, \mathbb{T}^q_{5/3},\mathbb{T}^q_{7/5}$ (see Example~\ref{ExT75}) are as follows
$$
\xymatrix @!0 @R=0.8cm @C=1.3cm
{
&\bullet\ar@{-}[ldd]_{q^{-1}}\ar@{-}[dd]_1\ar@{-}[r]^{q^2}\ar@{-}[rdd]_q
&\bullet\ar@{-}[r]^q\ar@{-}[dd]_1
&\bullet\ar@{-}[ldd]^1
\\
\\
\bullet\ar@{-}[r]_1
&\bullet\ar@{-}[r]_1
&\bullet
}
\xymatrix @!0 @R=0.8cm @C=1.5cm
{
&\bullet\ar@{-}[ldd]_{q^{-1}}\ar@{-}[dd]_1\ar@{-}[r]^q
&\bullet\ar@{-}[ldd]_1\ar@{-}[r]^{q^2}\ar@{-}[dd]_q
&\bullet\ar@{-}[ldd]^1
\\
\\
\bullet\ar@{-}[r]_1
&\bullet\ar@{-}[r]_1
&\bullet
}
\xymatrix @!0 @R=0.8cm @C=1.5cm
{
&\bullet\ar@{-}[ldd]_{q^{-1}}\ar@{-}[dd]_1\ar@{-}[r]^q
&\bullet\ar@{-}[ldd]_1\ar@{-}[r]^q
&\bullet\ar@{-}[lldd]_1\ar@{-}[ldd]_<<<<<<<<<<<<<<q\ar@{-}[r]^{q^2}
&\bullet\ar@{-}[lldd]_1\\
\\
\bullet\ar@{-}[r]_1
&\bullet\ar@{-}[r]_1
&\bullet
}
$$
\end{ex}

%%%%%%%%%%%%%%%%%%%%
\subsection{Labeling the vertices of~$\mathbb{T}^q_{r/s}$}\label{WeTrLV}
%%%%%%%%%%%%%%%%%%%%

We will also need to label the vertices of the weighted triangulation~$\mathbb{T}^q_{r/s}$.

\begin{defn}
Let us label by the $q$-rational $\left[\frac{r'}{s'}\right]_{q}$
the vertex of $\mathbb{T}^q_{r/s}$, that corresponds to the rational~$\frac{r'}{s'}$ in $\mathbb{T}_{r/s}$. 
\end{defn}

The weights of $\mathbb{T}^{q}_{r/s}$ then encode algebraic relations between the fractions attached to the vertices.
More precisely, we have the following statement.

\begin{thm}
\label{WeightThm}
Every triangle of $\mathbb{T}^{q}_{r/s}$ is of the form
\begin{equation}\label{WFsum}
\xymatrix @!0 @R=0.8cm @C=1.5cm
{
\frac{\Rc'}{\Sc'}\ar@{-}[rr]^{q^{\ell-1}}
\ar@{-}[rdd]_1&&\frac{\Rc''}{\Sc''}\ar@{-}[ldd]^{q^\ell}
\\
\\
&\frac{\Rc'+q^\ell{}\Rc''}{\Sc'+q^\ell{}\Sc''}
}.
\end{equation}
\end{thm}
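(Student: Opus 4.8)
The plan is to prove Theorem~\ref{WeightThm} by induction on the number of vertices $n$ of the polygon, building the weighted triangulation $\mathbb{T}^q_{r/s}$ one triangle at a time and tracking both the vertex labels (the $q$-rationals) and the edge weights simultaneously. The base case is the initial triangle $(\frac{1}{0},\frac{1}{1},\frac{0}{1})$, whose $q$-labels are $\frac{1}{0}$, $\frac{1}{1}=\frac{[1]_q}{[1]_q}$ and $\frac{0}{1}$, and whose weights are $1$, $1$, $q^{-1}$ by Remark (a) following Definition~\ref{WeightDef}; here one checks directly that $\frac{1}{1}=\frac{0+q^{-1}\cdot 1}{1+q^{-1}\cdot 0}$, so the claimed ``$q$-Farey sum'' relation \eqref{WFsum} holds with $\ell=0$ at the corner $\frac{1}{1}$ (the boundary case where the exponent $\ell$ is allowed to be $0$ rather than $\geq 1$; I would state this edge case carefully).

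Next I would describe the inductive step. Passing from $\mathbb{T}^q_{r/s}$ to a triangulation with one more vertex amounts to attaching one new triangle along an existing edge, either an ``upper'' edge or a ``bottom'' edge; in the new polygon the newly created vertex is $k{+}1$ in the re-indexed picture and by Theorem~\ref{WeightThm}'s statement (and the Series labeling recalled in \S\ref{TrsFS}) should carry the label $\left[\frac{r}{s}\right]_q$. The two types of attachment correspond exactly to the two moves in the Stern--Brocot / Farey construction, which at the level of matrices are multiplication by the generators $R_q$ and $L_q$ from item (c) of the introduction. I would make this explicit: encode the pair $(\Rc,\Sc)$ attached to a vertex as a column vector (or encode an edge as the $2\times 2$ matrix whose columns are the two endpoint vectors), show that adding a base-down triangle multiplies on the right by $R_q$ and adding a base-up triangle multiplies by $L_q$, and verify that these matrix actions reproduce precisely the vertex-labeling rule $\frac{\Rc'+q^\ell \Rc''}{\Sc'+q^\ell \Sc''}$ and the edge-weight rules (a),(b),(c) of Definition~\ref{WeightDef}, with the exponent $\ell$ being incremented exactly when Definition~\ref{WeightDef}(b) dictates. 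The key identity to verify in each case is that the new triangle again has side weights $(1,q^\ell,q^{\ell-1})$ for the appropriate $\ell\geq 1$, which is Remark (c) following Definition~\ref{WeightDef}; this is a finite case check (new vertex is a ``top'' vertex vs. a ``bottom'' vertex) using the local pictures in Definition~\ref{WeightDef}.

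An alternative, perhaps cleaner, organization is to avoid re-indexing: fix $\frac{r}{s}$, and induct along the path of convergents $\frac{r_1}{s_1},\frac{r_2}{s_2},\ldots$ running through the vertices $0,1,\ldots,k{+}1$ of $\mathbb{T}_{r/s}$, using the recursion \eqref{LinRecEq} together with Corollary~\ref{DegRSProp} to identify the polynomials $\Rc_i,\Sc_i$ attached to vertex $i$ with the numerator/denominator of $\llbracket c_1,\ldots,c_i\rrbracket_q$. The content of the theorem is then that each diagonal added in building up the triangulation — which connects a convergent $\frac{\Rc'}{\Sc'}$ to a convergent $\frac{\Rc''}{\Sc''}$ — satisfies $\Rc' + q^\ell \Rc'' = \Rc'''$ and $\Sc' + q^\ell \Sc'' = \Sc'''$ at the third vertex, with $\ell$ read off from the edge weight. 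This reduces the theorem to a statement purely about the recursions \eqref{LinRecEq}: writing $[c_{i+1}]_q = 1 + q + \cdots + q^{c_{i+1}-1}$ and telescoping, one sees that $\Rc_{i+1} = \Rc_i + q^{?}\Rc_i - \cdots$ decomposes as a sum of ``Farey-summed'' contributions indexed by the intermediate ``fan'' vertices between $i$ and $i{+}1$, each with the weight prescribed by Definition~\ref{WeightDef}(b); this is essentially the same manipulation already used in the proof of Proposition (iii) above (the splitting $\Rc_{i+1}=q[c_{i+1}-1]_q\Rc_i + (\Rc_i - q^{c_i-1}\Rc_{i-1})$).

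The main obstacle, I expect, is bookkeeping rather than conceptual depth: one must pin down precisely how the exponent $\ell$ in the edge weight behaves under each of the two elementary moves, keeping the two families of edges (``upper'' edges emanating from a top vertex, weighted $1,q,q^2,\ldots$, and ``bottom'' edges, almost all weighted $1$) straight, and handling the asymmetry at vertex $1$ (the $q^{-1}$ edge and the rule~(a) weights $q^{c_1-1},q^{c_1-2}$) as well as the degenerate $\ell=0$ case on the initial triangle. Once the correspondence with the matrices $R_q,L_q$ — equivalently, with the recursion \eqref{LinRecEq} — is set up correctly, the verification of \eqref{WFsum} for each newly attached triangle is a short computation, and the positivity and coprimality needed to make sense of the reduced fractions are supplied by the Proposition in \S\ref{propRS}.
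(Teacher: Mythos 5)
Your proposal is correct and, in its second ("alternative") organization, is essentially identical to the paper's proof: the paper identifies the three corner labels of each triangle with convergents $\llbracket c_1,\ldots,c_{i-1}\rrbracket_q$, $\llbracket c_1,\ldots,c_{i-1},\ell\rrbracket_q$, $\llbracket c_1,\ldots,c_{i-1},\ell+1\rrbracket_q$ (splitting into the two cases of a base-down versus base-up triangle) and verifies \eqref{WFsum} directly from the recurrence \eqref{LinRecEq}, with a short inner induction for the fan of triangles at an upper vertex — exactly your telescoping of $[c_{i+1}]_q$. Your first organization via $R_q,L_q$ is only a repackaging of the same recursion, so there is no substantive difference in approach.
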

\begin{proof}
We suppose that $\frac{\Rc'}{\Sc'}=[\frac{r'}{s'}]_{q}$ and $\frac{\Rc''}{\Sc''}=[\frac{r''}{s''}]_{q}$ are attached to the vertices $j>k$ and $i\leq k$ respectively. 

Case 1. The local picture is as follows:
$$
\xymatrix @!0 @R=0.8cm @C=1cm
{
i-1\ar@{-}[rrr]^{q^{c_{i-1}-1}}
&&&\mathbf{i}\ar@{-}[ldd]^{q^{\ell-1}}\ar@{-}[llldd]_{q^{\ell-2}}\ar@{-}[rr]^{q^{c_{i}-1}}
\ar@{-}[rdd]^{q^{\ell}}
&&i+1
\\
\\
j+1\ar@{-}[rr]^{1}
&
&\mathbf{j}\ar@{-}[rr]^{1}
&&\mathbf{j-1}
}
$$
We want to show that the fraction $\frac{\Rc'''}{\Sc'''}$ attached to the vertex $j-1$ is given by $\frac{\Rc'+q^\ell{}\Rc''}{\Sc'+q^\ell{}\Sc''}$.
Recall that the coefficients 
of the negative continued fractions are given by the number 
of triangles incident to the vertices (see \S \ref{Trs}). 
In particular, for the fraction $\frac{\Rc'}{\Sc'}$ or $\frac{\Rc'''}{\Sc'''}$ the coefficients are given 
by removing all the vertices and triangles between $i$ and $j$ or $i$ and $j-1$ respectively.
More precisely one gets
$$
\frac{\Rc''}{\Sc''}=\llbracket{}c_1,\ldots,c_{i-1}\rrbracket_{q}, \qquad 
\frac{\Rc'}{\Sc'}=\llbracket{}c_1,\ldots,c_{i-1},\ell\rrbracket_{q}, \qquad 
\frac{\Rc'''}{\Sc'''}=\llbracket{}c_1,\ldots,c_{i-1},\ell+1\rrbracket_{q}.
$$
 The desired relations between the numerators and between the denominators are then deduced from the recurrence relations~\eqref{LinRecEq}.

Case 2. The local picture is as follows:
$$
\xymatrix @!0 @R=0.8cm @C=1.4cm
{
\mathbf{h}\ar@{-}[rrdd]_{q^{c-1}}\ar@{-}[r]^{q^{c}}
&\bullet\ar@{-}[rdd]_{1}\ar@{-}[r]^{q}
&\bullet\ar@{-}[dd]_{1}\ar@{--}[r]
&\mathbf{i}\ar@{-}[ldd]^1\ar@{-}[r]^{q}&\mathbf{i+1}\ar@{-}[lldd]^1
\\
\\
\bullet\ar@{-}[rr]_1\ar@{-}[uu]^{q^{c-2}}
&&\mathbf{j}\ar@{-}[rr]_1&&
}
$$
Note that if $h=i-1$ one has $\ell=c=c_{i-1}-1>1$, otherwise $\ell=1$.

We want to show that the fraction $\frac{\Rc'''}{\Sc'''}$ attached to the vertex $i+1$
 is given by $\frac{\Rc'+q^\ell{}\Rc''}{\Sc'+q^\ell{}\Sc''}$. 
 We proceed by induction on $i-h$.
If $h=i-1$ one has
$$
\frac{\Rc'}{\Sc'}=\llbracket{}c_1,\ldots,c_{i-1}\rrbracket_{q}, \qquad 
\frac{\Rc''}{\Sc''}=\llbracket{}c_1,\ldots,c_{i-1}-1\rrbracket_{q}, \qquad 
\frac{\Rc'''}{\Sc'''}=\llbracket{}c_1,\ldots,c_{i-1},2\rrbracket_{q}.  
$$
Using the recurrence relations \eqref{LinRecEq}, one easily obtains the desired relation.

If we assumed established that  the fraction $\frac{\Rc^{(4)}}{\Sc^{(4)}}$ attached to the vertex $i-1$ satisfies $\frac{\Rc''}{\Sc''}=\frac{\Rc'+q\Rc^{(4)}}{\Sc'+q\Sc^{(4)}}$, then again one easily deduces the desired relation using the coefficients of the expansions and the recurrence relations.
\end{proof}

%%%%%%%%%%%%%%%%%%%%
\subsection{Weighted Farey sum}
%%%%%%%%%%%%%%%%%%%%
We generalize the Farey sum~\eqref{FarSEq} in the case of weighted triangulation,
using the formula that has already appeared in Theorem~\ref{WeightThm}.

\begin{defn}
\label{FSDef}
In the triangle \eqref{WFsum}, we call the fraction:
\begin{equation}
\label{WFSEq}
\frac{\Rc'}{\Sc'}\oplus_q\frac{\Rc''}{\Sc''}:=
\frac{\Rc'+q^\ell{}\Rc''}{\Sc'+q^\ell{}\Sc''},
\end{equation}
the \textit{{weighted Farey sum}} of 
$\frac{\Rc'}{\Sc'}$ and $\frac{\Rc''}{\Sc''}$.
 \end{defn} 

Theorem~\ref{WeightThm} then affirms that every
$q$-rational number~$\left[\frac{r}{s}\right]_q$ can be computed recursively
from the triangulation $\mathbb{T}^{q}_{r/s}$, applying the weighted Farey sum rule.

Proposition \ref{PosProp} and Corollary \ref{voisin} can be also obtained as consequences of Theorem~\ref{WeightThm}.

\begin{ex}
\label{ExqT75}
The $q$-rationals in $\mathbb{T}^q_{5/2}, \mathbb{T}^q_{5/3},\mathbb{T}^q_{7/5}$
can be obtained recursively. One obtains the following.
$$
\xymatrix @!0 @R=0.8cm @C=1.5cm
{
&\frac{1}{0}\ar@{-}[ldd]_{q^{-1}}\ar@{-}[dd]_{1}\ar@{-}[r]^>>>{q^2}\ar@{-}[rdd]_<<<<<<<{q}
&\frac{1+q+q^2}{1}\ar@{-}[rr]^q\ar@{-}[dd]_{1}
&&\frac{1+2q+q^2+q^3}{1+q}\ar@{-}[lldd]^1
\\
\\
\frac{0}{1}\ar@{-}[r]_1
&\frac{1}{1}\ar@{-}[r]_1
&\frac{1+q}{1}
}
\qquad
\xymatrix @!0 @R=0.8cm @C=1.5cm
{
&\frac{1}{0}\ar@{-}[ldd]_{q^{-1}}\ar@{-}[dd]_{1}\ar@{-}[r]^q
&\frac{1+q}{1}\ar@{-}[ldd]_1\ar@{-}[rr]^{q^2}\ar@{-}[dd]_q
&&\frac{1+q+2q^2+q^3}{1+q+q^2}\ar@{-}[lldd]^1
\\
\\
\frac{0}{1}\ar@{-}[r]_1
&\frac{1}{1}\ar@{-}[r]_1
&\frac{1+q+q^2}{1+q}
}
$$
The $q$-rational $\left[\frac{7}{5}\right]_{q}$ corresponds to
$$
\xymatrix @!0 @R=0.8cm @C=1.7cm
{
&\frac{1}{0}\ar@{-}[ldd]_{q^{-1}}\ar@{-}[dd]_{1}\ar@{-}[r]^q
&\frac{1+q}{1}\ar@{-}[ldd]_1\ar@{-}[r]^<<<<q
&\frac{1+q+q^2}{1+q}\ar@{-}[lldd]^1\ar@{-}[dd]^q\ar@{-}[rr]^<<<<<<<{q^2}
&&\frac{1+q+2q^2+2q^3+q^4}{1+q+2q^2+q^3}\ar@{-}[lldd]^{1}\\
\\
\frac{0}{1}\ar@{-}[r]_1
&\frac{1}{1}\ar@{-}[rr]_1
&&\frac{\;\;1+q+q^2+q^3}{1+q+q^2}
}
$$

\end{ex}

%%%%%%%%%%%%%%%%%%%%
\subsection{Weighted Farey graph}\label{qFSBSec}
%%%%%%%%%%%%%%%%%%%%

The classical {\it Farey graph} is the infinite graph with the set of vertices equal to
the set of rationals completed by the infinity~$\Q\cup\{\infty\}$,
the infinity being represented by~$\frac10$.
Two vertices,~$\frac{r}{s}$ and~$\frac{r'}{s'}$, are connected
if and only if $rs'-r's=\pm1$.
The Farey graph form an infinite triangulation of the hyperbolic plane,
called the {\it Farey tessellation}.
Every continued fraction can be understood as a walk along the edges of the Farey graph.
For the basic properties of the Farey graph and its relation to
continued fractions; see~\cite{HW}.

We assign a weight to every edge of the Farey graph,
and then label its vertices by the $q$-rational numbers
(instead of rational numbers).

\begin{defn}
\label{qFDef}
{\rm
(a)
The sides of the ``initial'' triangle $(\frac01,\frac11,\frac10)$ are weighted as follows
\begin{center}
\psscalebox{1.0 1.0} % Change this value to rescale the drawing.
{
\begin{pspicture}(0,-1.315)(3.385,1.315)
\definecolor{colour0}{rgb}{1.0,0.0,0.2}
\psarc[linecolor=black, linewidth=0.02, dimen=outer](0.87,-0.685){0.8}{0.0}{180.0}
\psarc[linecolor=black, linewidth=0.02, dimen=outer](2.47,-0.685){0.8}{0.0}{180.0}
\psarc[linecolor=black, linewidth=0.02, dimen=outer](1.67,-0.685){1.6}{0.0}{180.0}
\rput[tl](0.87,0.515){\textcolor{colour0}{1}}
\rput[tl](2.47,0.515){\textcolor{colour0}{$1$}}
\rput[tl](1.67,1.315){\textcolor{colour0}{$q^{-1}$}}
\rput(0.07,-1.085){$\frac01$}
\rput(1.67,-1.085){$\frac11$}
\rput(3.27,-1.085){$\frac10$}
\end{pspicture}
}
\end{center}

(b)
The sides of every other triangle of the Farey graph are labeled according to the following rule.

\begin{center}
\psscalebox{1.0 1.0} % Change this value to rescale the drawing.
{
\begin{pspicture}(0,-1.315)(3.385,1.315)
\definecolor{colour0}{rgb}{1.0,0.0,0.2}
\psarc[linecolor=black, linewidth=0.02, dimen=outer](0.87,-0.685){0.8}{0.0}{180.0}
\psarc[linecolor=black, linewidth=0.02, dimen=outer](2.47,-0.685){0.8}{0.0}{180.0}
\psarc[linecolor=black, linewidth=0.02, dimen=outer](1.67,-0.685){1.6}{0.0}{180.0}
\rput[tl](0.87,0.515){\textcolor{colour0}{1}}
\rput[tl](2.47,0.515){\textcolor{colour0}{$q^\ell$}}
\rput[tl](1.67,1.315){\textcolor{colour0}{$q^{\ell-1}$}}
\end{pspicture}
}
\end{center}
}
\end{defn}

Clearly, the rules~(a) and~(b) uniquely determine the weights of all edges of the Farey graph.
Indeed, every edge is adjacent to two triangles in the Farey tessellation, and any two edges are related by a finite
sequences of triangles.
We will essentially be interested in the interval $[0,\infty)$ of the Farey graph.

Definition~\ref{qFDef} clarifies the weights in Definition~\ref{WeightDef}.
Indeed, the triangulation $\T^{q}_{r/s}$ can be embedded into the Farey graph (cf.~\cite{FB1}),
so that its weights are obtained as the pull-back of the weights on the Farey graph.
Therefore vertices are labeled according to the weighted Farey sum rule:
\begin{center}
\psscalebox{1.0 1.0} % Change this value to rescale the drawing.
{
\begin{pspicture}(0,-1.315)(3.385,1.315)
\definecolor{colour0}{rgb}{1.0,0.0,0.2}
\psarc[linecolor=black, linewidth=0.02, dimen=outer](0.87,-0.685){0.8}{0.0}{180.0}
\psarc[linecolor=black, linewidth=0.02, dimen=outer](2.47,-0.685){0.8}{0.0}{180.0}
\psarc[linecolor=black, linewidth=0.02, dimen=outer](1.67,-0.685){1.6}{0.0}{180.0}
\rput[tl](0.87,0.515){\textcolor{colour0}{1}}
\rput[tl](2.47,0.515){\textcolor{colour0}{$q^\ell$}}
\rput[tl](1.67,1.315){\textcolor{colour0}{$q^{\ell-1}$}}
\rput(0.07,-1.085){$\frac{\Rc}{\Sc}$}
\rput(1.67,-1.085){$\frac{\Rc+q^{\ell}\Rc'}{\Sc+q^{\ell}\Sc'}$}
\rput(3.27,-1.085){$\frac{\Rc'}{\Sc'}$}
\end{pspicture}
}
\end{center}
Moreover, from the embedding of  $\T^{q}_{r/s}$ one deduces that $\ell$ is the last coefficient in the negative expansion of   $\frac{\Rc''}{\Sc''}:=\frac{\Rc+q^{\ell}\Rc'}{\Sc+q^{\ell}\Sc'}$, i.e. 
if  $\frac{\Rc''}{\Sc''}=\llbracket{}c_1,\ldots,c_{i}\rrbracket_{q}$ then $\ell=c_{i}$.

The top part of the weighted Farey graph is represented in Figure \ref{wtFg}.
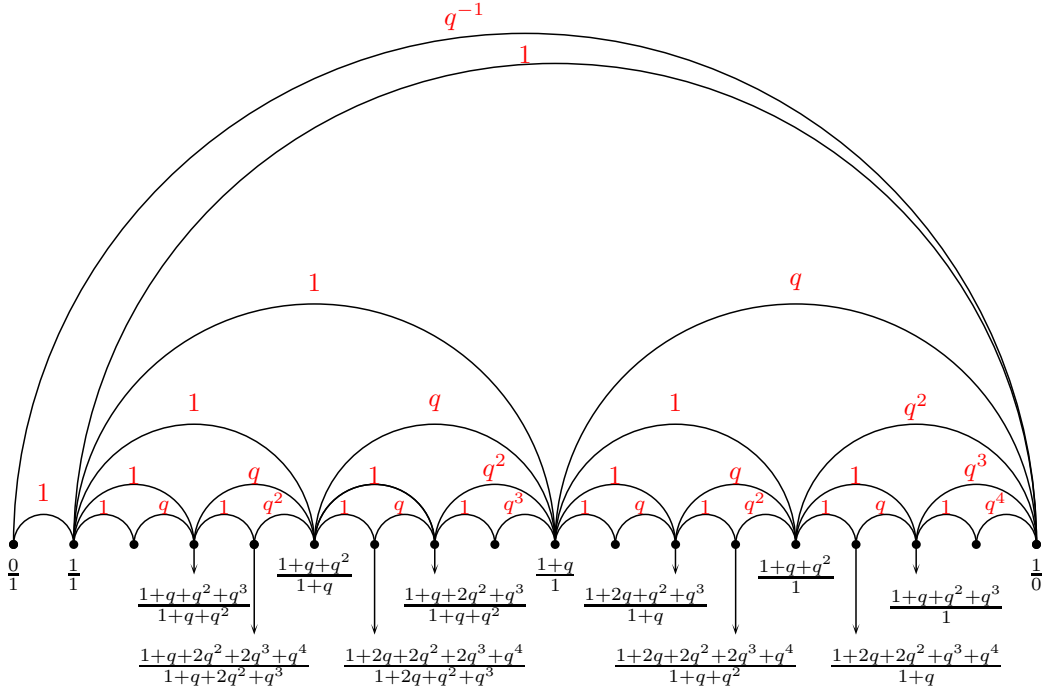
\begin{figure}[htbp]
\begin{center}
\psscalebox{1.0 1.0} % Change this value to rescale the drawing.
{
\begin{pspicture}(0,-4.535)(13.757692,4.535)
\psdots[linecolor=black, dotsize=0.12](3.2788463,-2.665)
\psdots[linecolor=black, dotsize=0.12003673](3.2788463,-2.665)
\psdots[linecolor=black, dotsize=0.12](0.07884616,-2.665)
\psdots[linecolor=black, dotsize=0.12](13.678846,-2.665)
\psdots[linecolor=black, dotsize=0.12](2.478846,-2.665)
\psdots[linecolor=black, dotsize=0.12](1.6788461,-2.665)
\psdots[linecolor=black, dotsize=0.12](0.87884617,-2.665)
\psdots[linecolor=black, dotsize=0.12](0.07884616,-2.665)
\psdots[linecolor=black, dotsize=0.12](4.078846,-2.665)
\psdots[linecolor=black, dotsize=0.12](4.878846,-2.665)
\psdots[linecolor=black, dotsize=0.12](5.6788464,-2.665)
\psdots[linecolor=black, dotsize=0.12](6.478846,-2.665)
\psdots[linecolor=black, dotsize=0.12](7.2788463,-2.665)
\psdots[linecolor=black, dotsize=0.12](8.078846,-2.665)
\psdots[linecolor=black, dotsize=0.12](8.878846,-2.665)
\psdots[linecolor=black, dotsize=0.12](9.678846,-2.665)
\psdots[linecolor=black, dotsize=0.12](10.478847,-2.665)
\psdots[linecolor=black, dotsize=0.12](11.278846,-2.665)
\psdots[linecolor=black, dotsize=0.12](12.078846,-2.665)
\psdots[linecolor=black, dotsize=0.12](12.878846,-2.665)
\psarc[linecolor=black, linewidth=0.02, dimen=outer](6.878846,-2.665){6.8}{0.0}{180.0}
\psdots[linecolor=black, dotsize=0.12](13.678846,-2.665)
\psarc[linecolor=black, linewidth=0.02, dimen=outer](6.878846,-2.665){0.4}{0.0}{180.0}
\psarc[linecolor=black, linewidth=0.02, dimen=outer](7.6788464,-2.665){0.4}{0.0}{180.0}
\psarc[linecolor=black, linewidth=0.02, dimen=outer](8.478847,-2.665){0.4}{0.0}{180.0}
\psarc[linecolor=black, linewidth=0.02, dimen=outer](9.278846,-2.665){0.4}{0.0}{180.0}
\psarc[linecolor=black, linewidth=0.02, dimen=outer](10.078846,-2.665){0.4}{0.0}{180.0}
\psarc[linecolor=black, linewidth=0.02, dimen=outer](10.878846,-2.665){0.4}{0.0}{180.0}
\psarc[linecolor=black, linewidth=0.02, dimen=outer](11.678846,-2.665){0.4}{0.0}{180.0}
\psarc[linecolor=black, linewidth=0.02, dimen=outer](12.478847,-2.665){0.4}{0.0}{180.0}
\psarc[linecolor=black, linewidth=0.02, dimen=outer](13.278846,-2.665){0.4}{0.0}{180.0}
\psarc[linecolor=black, linewidth=0.02, dimen=outer](0.47884616,-2.665){0.4}{0.0}{180.0}
\psarc[linecolor=black, linewidth=0.02, dimen=outer](1.2788461,-2.665){0.4}{0.0}{180.0}
\psarc[linecolor=black, linewidth=0.02, dimen=outer](2.0788462,-2.665){0.4}{0.0}{180.0}
\psarc[linecolor=black, linewidth=0.02, dimen=outer](2.8788462,-2.665){0.4}{0.0}{180.0}
\psarc[linecolor=black, linewidth=0.02, dimen=outer](3.6788461,-2.665){0.4}{0.0}{180.0}
\psarc[linecolor=black, linewidth=0.02, dimen=outer](4.478846,-2.665){0.4}{0.0}{180.0}
\psarc[linecolor=black, linewidth=0.02, dimen=outer](5.2788463,-2.665){0.4}{0.0}{180.0}
\psarc[linecolor=black, linewidth=0.02, dimen=outer](6.078846,-2.665){0.4}{0.0}{180.0}
\psarc[linecolor=black, linewidth=0.02, dimen=outer](1.6788461,-2.665){0.8}{0.0}{180.0}
\psarc[linecolor=black, linewidth=0.02, dimen=outer](3.2788463,-2.665){0.8}{0.0}{180.0}
\psarc[linecolor=black, linewidth=0.02, dimen=outer](4.878846,-2.665){0.8}{0.0}{180.0}
\psarc[linecolor=black, linewidth=0.02, dimen=outer](6.478846,-2.665){0.8}{0.0}{180.0}
\psarc[linecolor=black, linewidth=0.02, dimen=outer](8.078846,-2.665){0.8}{0.0}{180.0}
\psarc[linecolor=black, linewidth=0.02, dimen=outer](9.678846,-2.665){0.8}{0.0}{180.0}
\psarc[linecolor=black, linewidth=0.02, dimen=outer](11.278846,-2.665){0.8}{0.0}{180.0}
\psarc[linecolor=black, linewidth=0.02, dimen=outer](12.878846,-2.665){0.8}{0.0}{180.0}
\psarc[linecolor=black, linewidth=0.02, dimen=outer](2.478846,-2.665){1.6}{0.0}{180.0}
\psarc[linecolor=black, linewidth=0.02, dimen=outer](4.878846,-2.665){0.8}{0.0}{180.0}
\psarc[linecolor=black, linewidth=0.02, dimen=outer](5.6788464,-2.665){1.6}{0.0}{180.0}
\psarc[linecolor=black, linewidth=0.02, dimen=outer](8.878846,-2.665){1.6}{0.0}{180.0}
\psarc[linecolor=black, linewidth=0.02, dimen=outer](12.078846,-2.665){1.6}{0.0}{180.0}
\psarc[linecolor=black, linewidth=0.02, dimen=outer](4.078846,-2.665){3.2}{0.0}{180.0}
\psarc[linecolor=black, linewidth=0.02, dimen=outer](10.478847,-2.665){3.2}{0.0}{180.0}
\psarc[linecolor=black, linewidth=0.02, dimen=outer](7.2788463,-2.665){6.4}{0.0}{180.0}
\rput[b](6.878846,3.735){\textcolor{red}{1}}
\rput[t](4.078846,0.935){\textcolor{red}{1}}
\rput[t](10.478847,0.935){\textcolor{red}{$q$}}
\rput[t](2.478846,-0.665){\textcolor{red}{1}}
\rput[t](5.6788464,-0.665){\textcolor{red}{$q$}}
\rput[t](8.878846,-0.665){\textcolor{red}{1}}
\rput[t](12.078846,-0.665){\textcolor{red}{$q^2$}}
\rput[t](12.878846,-1.465){\textcolor{red}{$q^3$}}
\rput[b](11.278846,-1.865){\textcolor{red}{1}}
\rput[b](8.078846,-1.865){\textcolor{red}{1}}
\rput[b](9.678846,-1.865){\textcolor{red}{$q$}}
\rput[b](1.6788461,-1.865){\textcolor{red}{1}}
\rput[b](3.2788463,-1.865){\textcolor{red}{$q$}}
\rput[t](6.478846,-1.465){\textcolor{red}{$q^2$}}
\rput[b](4.878846,-1.865){\textcolor{red}{1}}
\rput[t](0.47884616,-1.865){\textcolor{red}{1}}
\rput[t](6.078846,4.535){\textcolor{red}{$q^{-1}$}}
\rput(0.07884616,-3.065){$\frac01$}
\rput(0.87884617,-3.065){$\frac11$}
\rput(13.678846,-3.065){$\frac10$}
\rput(7.2788463,-3.065){$\frac{1+q}{1}$}
\rput(4.078846,-3.065){$\frac{1+q+q^2}{1+q}$}
\rput(10.478847,-3.065){$\frac{1+q+q^2}{1}$}
\rput(2.478846,-3.465){$\frac{1+q+q^2+q^3}{1+q+q^2}$}
\rput(2.8788462,-4.265){$\frac{1+q+2q^2+2q^3+q^4}{1+q+2q^2+q^3}$}
\rput(6.078846,-3.465){$\frac{1+q+2q^2+q^3}{1+q+q^2}$}
\rput(8.478847,-3.465){$\frac{1+2q+q^2+q^3}{1+q}$}
\rput(12.478847,-3.465){$\frac{1+q+q^2+q^3}{1}$}
\psline[linecolor=black, linewidth=0.02, arrowsize=0.05291667cm 2.2,arrowlength=1.5,arrowinset=0.6]{<-}(3.2788463,-3.865)(3.2788463,-2.665)
\psline[linecolor=black, linewidth=0.02, arrowsize=0.05291667cm 2.2,arrowlength=1.5,arrowinset=0.6]{<-}(2.478846,-3.065)(2.478846,-2.665)
\psline[linecolor=black, linewidth=0.02, arrowsize=0.05291667cm 2.2,arrowlength=1.5,arrowinset=0.6]{<-}(4.878846,-3.865)(4.878846,-2.665)(4.878846,-2.665)
\psline[linecolor=black, linewidth=0.02, arrowsize=0.05291667cm 2.2,arrowlength=1.5,arrowinset=0.6]{<-}(5.6788464,-3.065)(5.6788464,-2.665)
\psline[linecolor=black, linewidth=0.02, arrowsize=0.05291667cm 2.2,arrowlength=1.5,arrowinset=0.6]{<-}(8.878846,-3.065)(8.878846,-2.665)
\psline[linecolor=black, linewidth=0.02, arrowsize=0.05291667cm 2.2,arrowlength=1.5,arrowinset=0.6]{<-}(12.078846,-3.065)(12.078846,-2.665)
\rput[b](1.2788461,-2.265){\textcolor{red}{\footnotesize$1$}}
\rput[b](2.0788462,-2.265){\textcolor{red}{\footnotesize$q$}}
\rput[b](2.8788462,-2.265){\textcolor{red}{\footnotesize$1$}}
\rput[b](4.478846,-2.265){\textcolor{red}{\footnotesize$1$}}
\rput[b](6.078846,-2.265){\textcolor{red}{\footnotesize$1$}}
\rput[b](7.6788464,-2.265){\textcolor{red}{\footnotesize$1$}}
\rput[b](9.278846,-2.265){\textcolor{red}{\footnotesize$1$}}
\rput[b](10.878846,-2.265){\textcolor{red}{\footnotesize$1$}}
\rput[b](12.478847,-2.265){\textcolor{red}{\footnotesize$1$}}
\rput[br](3.6788461,-2.265){\textcolor{red}{\footnotesize$q^2$}}
\rput[br](5.2788463,-2.265){\textcolor{red}{\footnotesize$q$}}
\rput[br](6.878846,-2.265){\textcolor{red}{\footnotesize$q^3$}}
\rput[br](8.478847,-2.265){\textcolor{red}{\footnotesize$q$}}
\rput[br](10.078846,-2.265){\textcolor{red}{\footnotesize$q^2$}}
\rput[br](11.678846,-2.265){\textcolor{red}{\footnotesize$q$}}
\rput[br](13.278846,-2.265){\textcolor{red}{\footnotesize$q^4$}}
\rput(9.278846,-4.265){$\frac{1+2q+2q^2+2q^3+q^4}{1+q+q^2}$}
\rput(12.078846,-4.265){$\frac{1+2q+2q^2+q^3+q^4}{1+q}$}
\psline[linecolor=black, linewidth=0.02, arrowsize=0.05291667cm 2.2,arrowlength=1.5,arrowinset=0.6]{<-}(9.678846,-3.865)(9.678846,-2.665)
\psline[linecolor=black, linewidth=0.02, arrowsize=0.05291667cm 2.2,arrowlength=1.5,arrowinset=0.6]{<-}(11.278846,-3.865)(11.278846,-2.665)
\rput(5.6788464,-4.265){$\frac{1+2q+2q^2+2q^3+q^4}{1+2q+q^2+q^3}$}
\end{pspicture}
}
\caption{Upper part of the weighted Farey graph between~$\frac01$ and~$\frac10$}
\label{wtFg}
\end{center}
\end{figure}

%%%%%%%%%%%%%%%%%%%%
\subsection{A comparison to the Gaussian $q$-binomial coefficients}\label{GaussSec}
%%%%%%%%%%%%%%%%%%%%

There is a certain analogy between the $q$-continued fractions and Gaussian $q$-binomial coefficients.
Indeed, the $q$-binomials ${r\choose s}_q$ satisfy the relations
$$
{r\choose s}_q=
q^s{r-1\choose s}_q+{r-1\choose s-1}_q,
$$
and can be calculated recurrently using the ``weighted Pascal triangle'' that starts like this:
$$
\xymatrix @!0 @R=0.6cm @C=0.8cm
{
&&&&{0\choose 0}_q\ar@{-}[rdd]^{1}\ar@{-}[ldd]_{1}
\\
\\
&&&{1\choose 0}_q\ar@{-}[rdd]^{1}\ar@{-}[ldd]_{1}
&&{1\choose 1}_q\ar@{-}[rdd]^{1}\ar@{-}[ldd]_{q}
\\
\\
&&{2\choose 0}_q\ar@{-}[rdd]^{1}\ar@{-}[ldd]_{1}
&&{2\choose 1}_q\ar@{-}[rdd]^{1}\ar@{-}[ldd]_{q}&&{2\choose 2}_q\ar@{-}[rdd]^{1}\ar@{-}[ldd]_{q^2}
\\
\\
&{3\choose 0}_q\ar@{-}[rdd]^{1}\ar@{-}[ldd]_{1}
&&{3\choose 1}_q\ar@{-}[rdd]^{1}\ar@{-}[ldd]_{q}
&&{3\choose 2}_q\ar@{-}[rdd]^{1}\ar@{-}[ldd]_{q^2}
&&{3\choose 3}_q\ar@{-}[rdd]^{1}\ar@{-}[ldd]_{q^3}
\\
\\
{4\choose 0}_q
&&{4\choose 1}_q&&{4\choose 2}_q&&{4\choose 3}_q&&{4\choose 4}_q
}
$$
with the first case ${4\choose 2}_q=1+q+2q^2+q^3+q^4$ where a $q$-binomial is not equal to a $q$-integer.

Let us mention that $q$-binomial coefficients do not occur as numerators 
(or denominators) of $q$-rationals,
except for the those $q$-binomials that are equal to $q$-integers.
The main resemblance between the $q$-rationals and $q$-binomials is the
recursion.
Note that every rational number is the middle point of exactly one Farey triangle,
and~\eqref{WFSEq} is then similar to the above recurrence for the $q$-binomials.
The main difference between $q$-rationals and $q$-binomials
is that, unlike the Pascal triangle, every vertex of the Farey graph is related with infinitely many other vertices.

%%%%%%%%%%%%%%%%%%%%
%%%%%%%%%%%%%%%%%%%%
\section{Counting on graphs}\label{EmunSec}
%%%%%%%%%%%%%%%%%%%%
%%%%%%%%%%%%%%%%%%%%

In this section we provide a combinatorial interpretation of the polynomials~$\Rc$ and~$\Qc$
of a $q$-rational~$\left[\frac{r}{s}\right]_q$.
Given a rational~$\frac{r}{s}$,
we consider an oriented graph of type~$A$ which is (projectively) dual to the
triangulation~$\T_{r/s}$.
We show that the coefficients of the polynomials~$\Rc(q)$ and~$\Qc(q)$ count 
some special subsets of vertices of the graph called ``the closures''.
Equivalently, this can be reformulated as counting of
subrepresentations of the maximal quiver representation of the associated oriented graph.

%%%%%%%%%%%%%%%%%%%%
\subsection{Oriented graph associated with a rational}\label{Grs}
%%%%%%%%%%%%%%%%%%%%
Given a rational~$\frac{r}{s}=[a_1,\ldots,a_{2m}]$,
consider the simple graph~$A_{n-3}$, where $n=a_1+\cdots+a_{2m}+2$, 
with $n-3$ vertices written in a row and connected by $n-4$ single edges:
$$
\xymatrix @!0 @R=0.8cm @C=1.3cm
{
\circ\ar@{-}[r]&\circ\ar@{-}[r]&\circ&\cdots
&\circ\ar@{-}[r]&\circ
}
$$
This graph is of course known as the Dynkin diagram of type $A_{n-3}$.

\begin{defn}
Let us choose the orientation of the edges according to the following rules:
\begin{enumerate}
\item[-]
first $a_1-1$ edges oriented 
left%~$\xymatrix @!0 @R=0.8cm @C=1cm{\circ\ar@{<-}[r]&\circ}$,
\item[-]
next $a_2$ edges oriented 
right%~$\xymatrix @!0 @R=0.8cm @C=1cm{\circ\ar@{->}[r]&\circ}$,
\item[-]
next $a_3$ edges oriented 
left, 
\item[]
etc.
\item[-]
last $a_{2m}-1$ edges oriented right:
\end{enumerate}
$$
\xymatrix @!0 @R=0.8cm @C=1.3cm
{
\circ\ar@{<-}[r]\ar@/_0.8pc/@{-}[rr]_{a_1-1}&\circ\cdots\circ\ar@{<-}[r]&
\circ\ar@{->}[r]\ar@/_0.8pc/@{-}[rr]_{a_2}&\circ\cdots\circ\ar@{->}[r]&
\circ\ar@{<-}[r]\ar@/_0.8pc/@{-}[rr]_{a_3}&\circ\cdots\circ\ar@{<-}[r]&
\circ&\cdots\cdots&
\circ\ar@{->}[r]\ar@/_0.8pc/@{-}[rr]_{a_{2m}-1}&\circ\cdots\circ\ar@{->}[r]&\circ
}
$$
We denote by~$\G_{r/s}$ the constructed oriented graph.
This construction is the same as in \cite[Rem 6.3]{LeSc}.
\end{defn}

\begin{ex}
\label{GGraphEx}
(a)
The rationals~$\frac{5}{2},\frac{5}{3}$ and~$\frac{7}{5}$ correspond to
the graphs
$$
\xymatrix @!0 @R=0.8cm @C=1.3cm
{
\circ\ar@{<-}[r]&\circ\ar@{->}[r]&\circ
}
\qquad
\xymatrix @!0 @R=0.8cm @C=1.3cm
{
\circ\ar@{->}[r]&\circ\ar@{<-}[r]&\circ
}
\qquad
\xymatrix @!0 @R=0.8cm @C=1.3cm
{
\circ\ar@{->}[r]&\circ\ar@{->}[r]&\circ\ar@{<-}[r]&\circ
}
$$
respectively.

(b)
The rational~$\frac{25}{11}=[2,3,1,2]$ has the graph
$$
\xymatrix @!0 @R=0.8cm @C=1.3cm
{
\circ\ar@{<-}[r]&\circ\ar@{->}[r]&\circ\ar@{->}[r]&\circ\ar@{->}[r]&\circ\ar@{<-}[r]&\circ\ar@{->}[r]&\circ
}
$$
\end{ex}

We will also consider a shorter graph obtained from~$\G_{r/s}$ by removing the first~$a_1$ arrows.
It has the form:
$$
\xymatrix @!0 @R=0.8cm @C=1.3cm
{
\circ\ar@{->}[r]\ar@/_0.8pc/@{-}[rr]_{a_2-1}&\circ\cdots\circ\ar@{->}[r]&
\circ\ar@{<-}[r]\ar@/_0.8pc/@{-}[rr]_{a_3}&\circ\cdots\circ\ar@{<-}[r]&
\circ&\cdots\cdots&
\circ\ar@{->}[r]\ar@/_0.8pc/@{-}[rr]_{a_{2m}-1}&\circ\cdots\circ\ar@{->}[r]&\circ
}
$$
and will be denoted by~$\G'_{r/s}$.

\begin{rem}
\label{ClustRem}
The graph~$\G_{r/s}$ is just the standard graph that is usually associated to a triangulation.
In our case, the triangulation~$\T_{r/s}$ is particularly simple, so that
the corresponding graph is of type~$A$.
Note that~$\G_{r/s}$ may be thought of as {\it projectively dual}\footnote{
In projective duality straight lines become points, and the points of intersection of lines become lines joining them.}
to the triangulation~$\T_{r/s}$ in the following sense.
Vertices of~$\G_{r/s}$ are identified with diagonals of~$\T_{r/s}$,
and its edges are identified with the vertices of~$\T_{r/s}$ where the diagonals cross:
$$
\xymatrix @!0 @R=0.8cm @C=0.8cm
{
&&&&
&&\bullet\ar@{-}[rr]\ar@{-}[dd]\ar@{-}[lldd]\ar@{-}[lllldd]\ar@{-}[lllllldd]
&&\bullet\ar@{-}[rr]\ar@{-}[lldd]&&\bullet\ar@{-}[lllldd]
\\
&&&&\circ\ar@{<-}[r]&\circ\ar@{<-}[r]&\circ\ar@{->}[r]&\circ\ar@{->}[r]&\circ&\cdots\\
\bullet\ar@{-}[rr]&&\bullet\ar@{-}[rr]&&\bullet\ar@{-}[rr]&&\bullet&\cdots&\cdots
}
\xymatrix @!0 @R=0.8cm @C=0.8cm
{
&\bullet\ar@{-}[rr]\ar@{-}[rrdd]
&&\bullet\ar@{-}[rr]\ar@{-}[dd]
&&\bullet\ar@{-}[lldd]\ar@{-}[rr]
&&\bullet\ar@{-}[lllldd]
\\
&\cdots&\circ\ar@{->}[r]&\circ\ar@{->}[r]&\circ\\
&\cdots&\cdots&\bullet
}
$$

The projectively dual graph associated with
a (more general) triangulation of an $n$-gon is constructed as follows.
Mark a puncture at every diagonal of the triangulation, and join by an edge
those punctures whose diagonals share a vertex of the $n$-gon.
Fix the orientation of every edge of the graph in such a way that the orientation of the 
triangle defined by the edge and the corresponding vertex of the $n$-gon be positive.
This leads to an oriented graph with $n-3$ vertices which is not necessarily bipartite and can contain cycles.

For example, the graph projectively dual to the following triangulated decagon
$$
\xymatrix @!0 @R=0.48cm @C=0.66cm
 {
&&&\ar@{-}[lld]\ar@{-}[rrd]\ar@{-}[rrrddd]\ar@{-}[dddddddd]\ar@{-}[llddddddd]&
\\
&\ar@{-}[ldd]\ar@{-}[ldddd] \ar@{-}[dddddd]&&&& \ar@{-}[rdd]\\\
&\circ\ar@{<-}[lddd]&&\circ\ar@{->}[rr]&&\circ\ar@{->}[ddd]&&&&\\
\ar@{-}[dd]&&&&&&\ar@{-}[dd]\ar@{-}[lllddddd]\\
\;\;\;\circ&&\!\!\circ\ar@{->}[luu]\ar@{->}[ruu]&&&&&\\
\ar@{-}[rdd]&&&&&\!\!\!\circ\ar@{->}[lluuu]\ar@{<-}[ldd]&\ar@{-}[ldd]\ar@{-}[lllddd]\\
&&&&&&&\\
&\ar@{-}[rrd]&&&\circ& \ar@{-}[lld]\\
&&&&
}
\quad
\xymatrix @!0 @R=0.8cm @C=1.3cm
{
\\
&&&&\circ\ar@{->}[rd]\ar@{<-}[ld]
\\
\circ\ar@{->}[r]&\circ\ar@{<-}[r]&\circ\ar@{->}[r]&\circ\ar@{<-}[rr]&&\circ\ar@{<-}[r]&\circ
}
$$
contains a $3$-cycle.
This is due to the interior triangle (yet the graph is ``mutation equivalent'' to~$A_7$ 
in the sense of cluster algebra mutations).

\end{rem}

%%%%%%%%%%%%%%%%%%%%
\subsection{Closure subsets}
%%%%%%%%%%%%%%%%%%%%

Let us recall a standard notion of graph theory.

\begin{defn}
\label{CloDef}
{\rm
Given an oriented graph~$\G$,
a set of vertices $\cC\subset\G$ is called
a {\it closure} if there are no outgoing edges from the elements of~$\cC$ to vertices in the complement~$\G\setminus\cC$.
}
\end{defn}
A closure with $\ell$ elements will be called an \textit{$\ell$-vertex closure}.
A single vertex forms a $1$-vertex closure if and only if it is a {\it sink} of~$\G$.
More generally, an $\ell$-vertex closure can be thought of as a 
``generalized sink''.
Note that there can be edges between the vertices that belong to the closure.

The following statement is our main combinatorial result.

\begin{thm}
\label{EnumThm}
Let $\frac{r}{s}=[a_{1}, \ldots, a_{2m}]$
and $\frac{\Rc(q)}{\Sc(q)}=\left[\frac{r}{s}\right]_q$, then

(i)
the numerator is a polynomial
$
\Rc(q)=1+\r_1q+\r_2q^2+\cdots+\r_{n-4}q^{n-4}+q^{n-3},
$
such that
$$ 
\r_i =\#\{ \,i\text{-vertex closures of the graph}~\G_{r/s}\},
$$
 for all $i$;

(ii)
the denominator is a polynomial
$
\Sc(q)=1+\s_1q+\s_2q^2+\cdots+\s_{n-a_1-4}q^{n-a_1-4}+q^{n-a_1-3}
$
such that
$$ 
\s_i =\#\{ \,i\text{-vertex closures of the graph}~\G'_{r/s}\},
$$
 for all $i$.

\end{thm}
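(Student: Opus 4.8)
The plan is to prove the two enumerative formulas simultaneously by induction, using the recursive structure of the weighted triangulation $\T^q_{r/s}$ from Theorem~\ref{WeightThm} together with the projective duality between $\T_{r/s}$ and $\G_{r/s}$ explained in Remark~\ref{ClustRem}. The key observation is that, under this duality, adding a triangle to the triangulation corresponds to adding a vertex (a new source or sink, depending on the local orientation) at one end of the graph of type $A$. First I would set up the right bookkeeping: for an oriented graph $\G$ of type $A_N$, let $P_\G(q) := \sum_\ell \#\{\ell\text{-vertex closures of }\G\}\,q^\ell$ be the closure-counting polynomial, with the empty set contributing the constant term $1$. The claim is then that $\Rc(q) = P_{\G_{r/s}}(q)$ and $\Sc(q) = P_{\G'_{r/s}}(q)$.

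The core of the argument is a local recursion for $P_\G$ when one vertex is appended. Suppose $\G'$ is obtained from $\G$ by adjoining a new vertex $v$ at the right end, joined to the former rightmost vertex $w$ of $\G$. There are two cases. If the new edge is oriented $w \to v$ (so $v$ is a sink relative to that edge), then every closure of $\G$ is still a closure of $\G'$, and moreover one may freely adjoin $v$ to any closure of $\G$, since $v$ has no outgoing edges; hence $P_{\G'}(q) = (1+q)\,P_\G(q)$. If instead the edge is oriented $v \to w$, then a set containing $v$ is a closure of $\G'$ only if it also contains $w$; writing closures according to whether they contain $v$, one gets $P_{\G'}(q) = P_\G(q) + q\cdot P_{\G/w}(q)$, where $\G/w$ denotes the graph $\G$ with $w$ (and, in the closure count, the constraint of reaching $w$) appropriately handled — concretely, $q\cdot P_{\G/w}$ counts the closures of $\G'$ that contain $v$, which are in bijection with closures of $\G$ containing $w$. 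The plan is to check that these two cases match exactly the two cases in the proof of Theorem~\ref{WeightThm} (the two local pictures, Case~1 and Case~2), where the weighted Farey sum $\frac{\Rc'+q^\ell\Rc''}{\Sc'+q^\ell\Sc''}$ is built: the factor $q^\ell$ on the new edge of $\T^q$ should correspond precisely to the number of new arrows of a given direction being appended to $\G$, i.e.\ to iterating the "sink" case $\ell-1$ times and then performing one "source" step, or vice versa. Translating the recursion \eqref{LinRecEq} for $\Rc_i,\Sc_i$ into this language, and matching initial conditions ($P$ of the empty graph is $1$, $P$ of a single sink is $1+q$, matching $\Rc_1 = [c_1]_q$, $\Sc_1 = 1$ after unwinding $c_1$ consecutive appended vertices), will give the result by induction on the number of triangles, equivalently on $n$.

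Concretely I would proceed as follows. Step 1: record the duality dictionary from Remark~\ref{ClustRem} precisely enough that "add a triangle at vertex $i+1$ of $\T_{r/s}$" becomes "append one vertex to $\G_{r/s}$, with orientation determined by whether the vertex is on the top or bottom arc." Step 2: prove the two local closure-polynomial identities above (a short combinatorial lemma about closures in type $A$). Step 3: match these identities against the two cases of the proof of Theorem~\ref{WeightThm}, identifying the exponent $\ell$ with a run of same-direction arrows, so that $a_i$ consecutive top (resp.\ bottom) triangles correspond to $a_i$ appended vertices of alternating source/sink type grouped appropriately; this is where the conversion formula \eqref{HZRegEqt} between the $a$'s and $c$'s, and the definition of $\G_{r/s}$ in terms of the $a_i$ (first $a_1-1$ arrows left, next $a_2$ right, etc.), get used to see that the orientation pattern of $\G_{r/s}$ is exactly the one produced. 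Step 4: handle $\Sc$ the same way, noting that $\G'_{r/s}$ is $\G_{r/s}$ with the first $a_1$ arrows removed, which on the triangulation side is exactly passing from $\frac{r}{s}$ to $\frac{1}{0}$ along the initial triangle, i.e.\ starting the recursion from $(\Sc_0,\Sc_1) = (0,1)$ instead of $(\Rc_0,\Rc_1) = (1,[c_1]_q)$; the degree count and the fact that the top coefficient is $1$ (unique maximal closure = the set of all sinks' upward-closure, here the whole relevant subgraph) come from Corollary~\ref{DegRSProp}.

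The main obstacle I anticipate is Step 3: getting the bijection between runs of arrows in $\G_{r/s}$ and the weight exponents $\ell$ in $\T^q_{r/s}$ exactly right, including the off-by-one shifts ($a_1-1$ versus $a_1$, the $+2$ in the $c$-to-$a$ conversion, the $q^{-1}$ edge of the initial triangle which is not used). In particular one must be careful that when a maximal run of, say, left-oriented arrows of length $a$ is appended, the closure polynomial picks up the factor corresponding to $q^a$ in exactly the place Theorem~\ref{WeightThm} predicts — this requires checking that appending a sink $a$ times multiplies $P$ by $(1+q+\cdots+q^{a-1})\cdot(\text{something})$ rather than $(1+q)^a$, because after the first appended sink the next appended vertex is a source relative to the previous one, so the two local cases genuinely alternate. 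Verifying that this alternation reproduces the $q$-continuant recursion \eqref{LinRecEq} termwise is the crux; once it is in place, the induction closes immediately and Proposition~\ref{PosProp} is recovered as the statement that a count of closures is a nonnegative integer.
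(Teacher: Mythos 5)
There is a genuine gap: your first local lemma is false. If $\G'$ is obtained from $\G$ by attaching a new vertex $v$ to the old right endpoint $w$ via an edge $w\to v$, it is \emph{not} true that every closure of $\G$ remains a closure of $\G'$ — any closure containing $w$ now has the outgoing edge $w\to v$ into the complement, so it is destroyed. The correct identity is
$$
P_{\G'}(q)=q\,P_{\G}(q)+P_{\G}^{\,w\notin}(q),
$$
where $P_{\G}^{\,w\notin}$ counts only the closures of $\G$ avoiding $w$; your claimed $P_{\G'}=(1+q)P_\G$ already fails for $\tfrac52$: building $\circ\leftarrow\circ\to\circ$ from $\circ\leftarrow\circ$ (whose closure polynomial is $1+q+q^2$) your formula gives $1+2q+2q^2+q^3$, whereas the numerator of $[\tfrac52]_q$ is $1+2q+q^2+q^3$. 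The same defect is hiding in your ``source'' case, where the term $q\,P_{\G/w}$ is the count of closures of $\G$ \emph{containing} $w$, again not the closure polynomial of any single subgraph. So a one-polynomial, vertex-by-vertex recursion cannot close; you must carry a second quantity (closures refined by whether they contain the current right endpoint). Your closing worry about $(1+q)^a$ versus $[a]_q$ is exactly the symptom of this, but the proposal never resolves it, and the proposed alternation of source/sink appends within a run of same-direction arrows is also not what happens (a run of $a$ right arrows appends $a$ consecutive sinks, not alternating types).

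The fix is essentially the paper's argument, and it is worth seeing why it works where yours stalls: instead of peeling off one vertex, peel off either the last vertex or the entire last run of $a_{2m}$ vertices. Closures of $\G_{r/s}$ containing the last vertex biject with closures of $\G_{r'/s'}$ ($\frac{r'}{s'}=[a_1,\dots,a_{2m}-1]$, one right edge removed), contributing $q\Rc'$; closures avoiding the last vertex must avoid the whole terminal right-oriented chain and biject with closures of $\G_{r''/s''}$ ($\frac{r''}{s''}=[a_1,\dots,a_{2m-1}-1,1]$), contributing $\Rc''$. This matches the weighted Farey sum $\Rc=q\Rc'+\Rc''$ term by term, and the auxiliary count you were missing is precisely the closure polynomial of the shorter graph $\G_{r''/s''}$, i.e.\ a previous convergent. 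With that replacement (and the two remaining cases $a_{2m}=2$, $a_{2m}=1$, the latter producing the factor $q^{a_{2m-1}+1}$), the induction you outline does close.
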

Theorem~\ref{EnumThm} will be proved in Section~\ref{PETSec}.

Let us give some examples.

\begin{ex}
(a)
Continuing Example~\ref{GGraphEx}, the graph~$\G_{5/2}$ has:
$$
\begin{array}{c}
\\[4pt]
\xymatrix @!0 @R=0.8cm @C=1.1cm
{
\circ\ar@{<-}[r]&\circ\ar@{->}[r]&\circ
}
\\[4pt]
\hbox{one $0$-closure}
 \end{array}
 \qquad
 \begin{array}{c}
\xymatrix @!0 @R=0.8cm @C=1.1cm
{
\bullet\ar@{<-}[r]&\circ\ar@{->}[r]&\circ
}
\\[4pt]
\xymatrix @!0 @R=0.8cm @C=1.1cm
{
\circ\ar@{<-}[r]&\circ\ar@{->}[r]&\bullet
}
\\[4pt]
\hbox{two $1$-closures}
 \end{array}
\qquad 
\begin{array}{c}
\\[4pt]
\xymatrix @!0 @R=0.8cm @C=1.1cm
{
\bullet\ar@{<-}[r]&\circ\ar@{->}[r]&\bullet
}
\\[4pt]
\hbox{one $2$-closure}
 \end{array}
 \qquad
\begin{array}{c}
\\[4pt]
\xymatrix @!0 @R=0.8cm @C=1.1cm
{
\bullet\ar@{<-}[r]&\bullet\ar@{->}[r]&\bullet
}
\\[4pt]
\hbox{one $3$-closure}
 \end{array}
$$
 where the vertices forming a closure are colored black.
 The corresponding polynomial is $1+2q+q^2+q^3$, the numerator of~$\left[\frac52\right]_q$.
 
 (b)
Similarly, the graph~$\G_{5/3}$ has:
$$
\begin{array}{c}
\\[4pt]
\xymatrix @!0 @R=0.8cm @C=1.1cm
{
\circ\ar@{->}[r]&\circ\ar@{<-}[r]&\circ
}
\\[4pt]
\hbox{one $0$-closure}
 \end{array}
 \qquad
 \begin{array}{c}
\\[4pt]
\xymatrix @!0 @R=0.8cm @C=1.1cm
{
\circ\ar@{->}[r]&\bullet\ar@{<-}[r]&\circ
}
\\[4pt]
\hbox{one $1$-closures}
 \end{array}
\qquad 
\begin{array}{c}
\xymatrix @!0 @R=0.8cm @C=1.1cm
{
\circ\ar@{->}[r]&\bullet\ar@{<-}[r]&\bullet
}
\\[4pt]
\xymatrix @!0 @R=0.8cm @C=1.1cm
{
\bullet\ar@{->}[r]&\bullet\ar@{<-}[r]&\circ
}
\\[4pt]
\hbox{two $2$-closures}
 \end{array}
 \qquad
\begin{array}{c}
\\[4pt]
\xymatrix @!0 @R=0.8cm @C=1.1cm
{
\bullet\ar@{->}[r]&\bullet\ar@{<-}[r]&\bullet
}
\\[4pt]
\hbox{one $3$-closure}
 \end{array}
$$
 The corresponding polynomial is $1+q+2q^2+q^3$, the numerator of~$\left[\frac53\right]_q$.
 
 (c)
 The graph~$\G_{25/11}$ (in which, for convenience, we number the vertices) is as follows
 $$
\xymatrix @!0 @R=0.8cm @C=1.3cm
{
1\ar@{<-}[r]&2\ar@{->}[r]&3\ar@{->}[r]&4\ar@{->}[r]&5\ar@{<-}[r]&6\ar@{->}[r]&7
}
$$
It obviously has one $0$-closure, one~$7$-closure, and three $1$-closures (three sinks, $1,5$ and $7$).
It furthermore has:
\begin{enumerate}
\item[-]
 four $2$-closures, namely the ``double sinks'' $(1,5),(1,7),(5,7)$ and~$(4,5)$;
\item[-]
 five $3$-closures $(1,5,7),(1,4,5),(3,4,5),(4,5,7),(5,6,7)$;
\item[-]
 five $4$-closures $(1,3,4,5),(1,4,5,6),(1,5,6,7),(3,4,5,7)$ and~$(4,5,6,7)$;
\item[-]
four $5$-closures obtained by removing one of the pairs of points:
$(1,2),(2,3),(1,6)$, or~$(6,7)$;
\item[-]
two $6$-closures, obtained by removing one of  the two sources $2$ or $6$.
 \end{enumerate}
The polynomial $\Rc$ in the numerator of $\left[\frac{25}{11}\right]_q$ is
indeed,
$$
\Rc(q)=1+3q+4q^2+5q^3+5q^4+4q^5+2q^6+q^7.
$$
(d) The graph $\G'_{25/11}$ is as follows
$$
\xymatrix @!0 @R=0.8cm @C=1.3cm
{
1\ar@{->}[r]&2\ar@{->}[r]&3\ar@{<-}[r]&4\ar@{->}[r]&5
}
$$
It has:
\begin{enumerate}
\item[-] 
one $0$-closure;
\item[-]
two $1$-closures (the two sinks, $3$ and $5$);
\item[-]
two  $2$-closures, namely the ``double sink'' $(3,5)$ and~$(2,3)$;
\item[-]
three $3$-closures $(1,2,3),(3,4,5), (2,3,5)$;
\item[-]
 two $4$-closures $(2,3,4,5),(1,2,3,5)$;
\item[-]
 one~$5$-closure.
 \end{enumerate}
On can compute the polynomial $\Sc$ corresponding to $\left[\frac{25}{11}\right]_q$ and check that
indeed,
$$
\Sc(q)=1+2q+2q^2+3q^3+2q^4+q^{5}.
$$
\end{ex}

\begin{rem}
(a)
As a corollary of Theorem~\ref{EnumThm}, we see that the total number of closure subsets 
of the graphs~$\G_{r/s}$ and~$\G'_{r/s}$
is equal to~$r$ and~$s$, respectively.
This observation provides an interesting combinatorial way to calculate the numerator and
denominator of a (classical) continued fraction.

(b)
It would be interesting to investigate counting of closure subsets of vertices
and the corresponding polynomials for more general graphs than~$\G_{r/s}$.
For instance, applied to the case of the graph projectively dual to a triangulation (see Remark~\ref{ClustRem})
this approach could lead to
a version of quantized Grassmannian~$G_{2,n}$.
\end{rem}

%%%%%%%%%%%%%%%%%%%%
\subsection{A reformulation with quiver representations}\label{RepSec}
%%%%%%%%%%%%%%%%%%%%

An equivalent reformulation of Theorem~\ref{EnumThm} uses the notion of quiver representations
(a ``quiver'' being another word for an oriented graph).
Let us recall a few standard definitions and facts.
We refer to~\cite{DW} for the general theory of quiver representations.

A {\it representation} of a quiver associates a vector space to each vertex, and a linear map to each edge.
A representation is {\it indecomposable} if it is not isomorphic to the direct sum of non-zero representations.
According to a theorem of Gabriel, indecomposable representations
of a quiver of finite (i.e., of Dynkin) type correspond to the positive roots of the root system of the Dynkin diagram.

The graph~$\G_{r/s}$ is a quiver of Dynkin type~$A$, it has one indecomposable representation of maximal dimension,
where every vector space is one-dimensional and every linear map is the identity map:
$$
\xymatrix @!0 @R=0.8cm @C=1.6cm
{
\C\ar@{<-}[r]^{\Id}&\C\cdots\C\ar@{<-}[r]^>>>{\Id}&
\C\ar@{->}[r]^<<<<{\Id}&\C\cdots\C\ar@{->}[r]^<<<<{\Id}&
\C&\cdots\cdots&
\C\ar@{->}[r]^<<<<{\Id}&\C\cdots\C\ar@{->}[r]^<<<<{\Id}&\C
}
$$
A {\it subrepresentation} of this representation replaces some of the spaces~$\C$ by~$\{0\}$ in such a way that
the identity maps remain consistent.

 \begin{prop}
 \label{ReProp}
There is a one-to-one correspondence between subrepresentations of the above maximal representation
and closure subsets.
\end{prop}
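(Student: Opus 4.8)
The plan is to unwind both sides of the claimed bijection explicitly and check they match. First I would recall precisely what a subrepresentation of the maximal representation of $\G_{r/s}$ looks like: since every vector space is $\C$ and every arrow is the identity, a subrepresentation is determined by the subset $V$ of vertices assigned the space $\C$ (the rest getting $\{0\}$), subject to the consistency condition that whenever $v\to w$ is an arrow of $\G_{r/s}$ and $v\in V$, the identity map $\C\to\C$ must restrict to a map on the chosen subspaces, forcing $w\in V$. In other words, $V$ is a subset closed under following arrows \emph{forward}. I would then observe that the complement $\cC:=\G_{r/s}\setminus V$ is exactly a closure in the sense of Definition~\ref{CloDef}: the condition ``$v\in V$ and $v\to w$ implies $w\in V$'' is the contrapositive of ``$w\in\cC$ and $v\to w$ implies $v\in\cC$,'' i.e.\ no arrow leaves $\cC$. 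Conversely, given any closure $\cC$, setting $V=\G_{r/s}\setminus\cC$ yields a consistent subrepresentation by the same contrapositive.

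The core of the argument is therefore this routine check of the consistency condition against the closure condition, together with the remark that the assignment $\cC\mapsto V=\G_{r/s}\setminus\cC$ is manifestly a bijection on subsets (its own inverse, up to complementation), and that it respects the two defining conditions in both directions. I would present it as: subrepresentations $\leftrightarrow$ "forward-closed" subsets $V$ $\leftrightarrow$ closures $\cC$ via complementation, with a one-line verification that each arrow of $\G_{r/s}$ imposes exactly matching constraints on the two sides.

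Honestly there is no serious obstacle here; the statement is essentially a restatement of definitions, and the only thing to be careful about is the direction of the arrows and the fact that subrepresentations are closed going \emph{along} arrows while closures forbid edges going \emph{out}, so that the correspondence is via complement, not identity. I would spend one sentence flagging that subtlety so the reader sees why "closure" and not "co-closure" appears. If one wants, one can also note in passing that under this bijection the number of vertices $\ell$ in the closure $\cC$ equals $(n-3)-\dim$ of the subrepresentation, which lines up the grading with the exponents of $q$ appearing in Theorem~\ref{EnumThm}, though that refinement is not needed for Proposition~\ref{ReProp} itself.
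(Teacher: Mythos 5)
Your first step is right: a subrepresentation of the maximal representation is determined by the set $V$ of vertices carrying $\C$, and since every arrow is the identity, $V$ must be closed under following arrows forward ($v\in V$ and $v\to w$ force $w\in V$). But your next step contains a genuine error. ``$V$ is closed under following arrows forward'' says precisely that no arrow goes from $V$ to its complement --- which is word for word the closure condition of Definition~\ref{CloDef} applied to $V$ \emph{itself}. The correspondence is therefore the identity on supports, exactly as the paper states, not complementation. Your contrapositive manipulation is logically fine up to the point where you conclude ``no arrow leaves $\cC$'': the condition ``$w\in\cC$ and $v\to w$ imply $v\in\cC$'' actually says that no arrow \emph{enters} $\cC$ from outside, i.e.\ that $\cC$ is closed under predecessors (a ``co-closure''), not a closure. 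Concretely, take $\G_{5/2}$, i.e.\ $1\leftarrow 2\rightarrow 3$: the support $V=\{1,3\}$ gives a valid subrepresentation and is a closure, but its complement $\{2\}$ is not a closure, since $2$ has outgoing arrows to $1$ and $3$.

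Because complementation is a bijection between closures and co-closures, the bare existence of \emph{some} bijection survives your error, but the map you construct is the wrong one: it sends an $\ell$-dimensional subrepresentation to an $(n-3-\ell)$-vertex set, whereas the statement is used (via Theorem~\ref{EnumThm} and the Corollary following Proposition~\ref{ReProp}) with the graded correspondence $\dim N=\#\cC$, and the polynomials $\Rc$, $\Sc$ are not palindromic in general, so the grading cannot be reversed. The fix is simply to delete the complementation: assign $\C$ to the vertices of the closure and $\{0\}$ to the rest, and observe that the two defining conditions coincide on the nose.
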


This statement is obvious from the definition.
Indeed, in this correspondence every vertex of a closure subset is assigned with~$\C$, and the vertices that
do not belong to the closure with~$\{0\}$.

As a consequence, we have the following.

\begin{cor}
The coefficients of the polynomials~$\Rc$ and~$\Sc$ corresponding to a $q$-rational
$\left[\frac{r}{s}\right]_q$ count subrepresentations
of the indecomposable representation of maximal dimension of~$\G_{r/s}$ and $\G'_{r/s}$,
respectively.
\end{cor}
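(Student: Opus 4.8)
The plan is to deduce the corollary directly from Theorem~\ref{EnumThm} together with Proposition~\ref{ReProp}, so that essentially nothing new needs to be proved; the work is one of assembly. First I would recall that Proposition~\ref{ReProp} establishes a bijection between the $\ell$-vertex closure subsets of an oriented graph~$\G$ of Dynkin type~$A$ and the subrepresentations of dimension~$\ell$ of the maximal indecomposable representation of~$\G$ (the one with~$\C$ at every vertex and $\Id$ along every arrow): a subrepresentation is precisely a choice of a subspace at each vertex compatible with the maps, and since the maps are identities the only choices are~$\C$ or~$\{0\}$, with compatibility forcing that the set of vertices carrying~$\C$ has no arrow leaving it, i.e.\ is a closure. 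Applying this to $\G=\G_{r/s}$ and to $\G=\G'_{r/s}$ gives, for each~$i$,
\begin{equation*}
\#\{\,i\text{-vertex closures of }\G_{r/s}\}=\#\{\text{subreps.\ of dimension }i\text{ of the maximal rep.\ of }\G_{r/s}\},
\end{equation*}
and likewise for~$\G'_{r/s}$.

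Next I would simply substitute these equalities into the formulas of Theorem~\ref{EnumThm}. That theorem states $\Rc(q)=1+\r_1q+\cdots+q^{n-3}$ with $\r_i=\#\{i\text{-vertex closures of }\G_{r/s}\}$, and the analogous statement for~$\Sc(q)$ with the graph~$\G'_{r/s}$. Replacing $\r_i$ and $\s_i$ by the corresponding counts of subrepresentations yields exactly the assertion of the corollary: the coefficient of~$q^i$ in~$\Rc$ (resp.\ in~$\Sc$) equals the number of $i$-dimensional subrepresentations of the maximal indecomposable representation of~$\G_{r/s}$ (resp.\ of~$\G'_{r/s}$). I would also remark, as a sanity check, that summing over~$i$ and evaluating at $q=1$ recovers $\Rc(1)=r$ and $\Sc(1)=s$, so the total numbers of subrepresentations are~$r$ and~$s$, matching the remark following Theorem~\ref{EnumThm}.

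There is no real obstacle here: the corollary is a formal consequence of the two cited results, and the only thing to be careful about is bookkeeping of dimensions, namely that "dimension of a subrepresentation" means the total dimension $\sum_v \dim V_v$ of the underlying graded vector space, which under the bijection of Proposition~\ref{ReProp} is the cardinality of the corresponding closure. If pressed to make the proof self-contained I would spell out the compatibility condition for subrepresentations once more and note that it is literally the defining condition of a closure (no outgoing edge from a "filled" vertex to an "empty" one), but this merely repeats the one-line argument already given after Proposition~\ref{ReProp}. The mild subtlety worth flagging is purely one of indexing conventions: $\G_{r/s}$ has $n-3$ vertices so $\deg\Rc=n-3$, while $\G'_{r/s}$ has $n-a_1-3$ vertices so $\deg\Sc=n-a_1-3$, consistent with Corollary~\ref{DegRSProp}; once these are aligned the identification of coefficients with subrepresentation counts is immediate.
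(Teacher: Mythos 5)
Your proposal is correct and is exactly the paper's route: the corollary is obtained by combining Theorem~\ref{EnumThm} with the bijection of Proposition~\ref{ReProp} between $\ell$-vertex closures and $\ell$-dimensional subrepresentations (the identity maps forcing the set of vertices carrying~$\C$ to have no outgoing arrows to its complement). Nothing further is needed.
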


This reformulation of Theorem~\ref{EnumThm} is more conceptual
and fits into the framework of $F$-polynomials and quiver representations;
see~\cite{DWZ} (for more details, cf. Remark~\ref{RB7} below).

%%%%%%%%%%%%%%%%%%%%
\subsection{Proof of Theorem~\ref{EnumThm}}\label{PETSec}
%%%%%%%%%%%%%%%%%%%%

We use the induction on~$n=a_1+\cdots+a_{2m}+2$.

The basis~$n=4$ is obvious: the graph~$\G_{2/1}$ consists in one point and has 
one empty and one $1$-vertex closure, that corresponds to the polynomial
$\Rc(q)=1+q$, while the graph~$\G'_{2/1}$ is empty and corresponds to the polynomial
$\Sc(q)=1$.

Let us prove the induction step.
Consider a rational~$\frac{r}{s}=[a_1,\ldots,a_{2m}]$.

Case {\bf A}. 
Assume that $a_{2m}\geq3$, then the triangulation~$\T_{r/s}$ ends with (at least) three
 base-up triangles, and the corresponding graph~$\G_{r/s}$ has (at least) two last edges
 oriented to the right.
 $$
\xymatrix @!0 @R=0.8cm @C=1cm
{
&\bullet\ar@{-}[rr]\ar@{-}[rrdd]
&&\bullet\ar@{-}[rr]^q\ar@{-}[dd]
&&\frac{\Rc'}{\Sc'}\ar@{-}[lldd]\ar@{-}[rr]^q
&&\frac{\Rc}{\Sc}\ar@{-}[lllldd]
\\
\cdots&&\circ\ar@{->}[r]&\circ\ar@{->}[r]&\circ\\
&&&\frac{\Rc''}{\Sc''}
}
$$
By Definition~\ref{FSDef}, one has
\begin{equation}
\label{CompCloEq}
\Rc=q\Rc'+\Rc''.
\end{equation}

Consider the rational
$$
\frac{r'}{s'}=[a_1,\ldots,a_{2m-1},a_{2m}-1]
$$
whose triangulation~$\T_{r'/s'}$ (resp. graph~$\G_{r'/s'}$) is obtained by removing
one right triangle (resp. one right edge).
In the case of the rational 
$$
\frac{r''}{s''}=[a_1,\ldots,a_{2m-1}-1,1],
$$
the triangulation~$\T_{r''/s''}$ is obtained by removing~$a_{2m}$ base-up triangles in the right
and reversion of the last base-down triangle in the series of~$a_{2m-1}$ base-down triangles.
The graph~$\G_{r''/s''}$ is obtained by removing the last~$a_{2m}$ vertices.
The three graphs can be drawn in the same picture.
$$
\xymatrix @!0 @R=1.2cm @C=1.2cm
{
&&&&&&&\ar@{}@<-2pt>[r]_{\G_{r''/s''}}&\ar@{--}@<4pt>[dd]&&\ar@{}@<-2pt>[r]_{\G_{r'/s'}}&\ar@{--}@<4pt>[dd]\ar@{}@<-2pt>[r]_{\G_{r/s}}&\\
\circ\ar@{<-}[r]\ar@/_0.8pc/@{-}[rr]_{a_1-1}&\circ\cdots\circ\ar@{<-}[r]&
\circ\ar@{->}[r]\ar@/_0.8pc/@{-}[rr]_{a_2}&\circ\cdots\circ\ar@{->}[r]&
\circ&\cdots\cdots&
\circ\ar@{<-}[r]\ar@/_0.8pc/@{-}[rr]_{a_{2m-1}-1}&\circ\cdots\circ\ar@{<-}[r]&\circ\ar@{<-}[r]&
\circ\ar@{->}[r]\ar@/_0.8pc/@{-}[rr]_{a_{2m}-2}&\circ\cdots\circ\ar@{->}[r]
& \circ\ar@{->}[r]&\circ\\
&&&&&&&&&&&&&&&
}
$$
Let us compare the closure subsets of~$\G_{r/s},\G_{r'/s'}$ and~$\G_{r''/s''}$.

\begin{lem}
One has:
\begin{equation}
\label{CompCloEqBis}
\r_{\ell+1}=\r'_{\ell}+\r''_{\ell+1},
\end{equation}
where $\r_{\ell+1},\r'_{\ell}$ and $r''_{\ell+1}$ are the numbers of
$(\ell+1)$-vertex closures of~$\G_{r/s}$, of $\ell$-vertex closures of~$\G_{r'/s'}$,
and $(\ell+1)$-vertex closures of~$\G_{r''/s''}$, respectively.
\end{lem}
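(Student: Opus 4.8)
The plan is to partition the closures of $\G_{r/s}$ according to whether or not they contain the rightmost vertex of the graph, and to identify one part with the closures of $\G_{r''/s''}$ and the other with those of $\G_{r'/s'}$.

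First I would fix notation: number the vertices of $\G_{r/s}$ by $1,\dots,N$ from left to right, with $N=n-3$. In the case $a_{2m}\ge3$ under consideration, the last $a_{2m}-1\ge2$ edges of $\G_{r/s}$ point to the right, so $N$ is a sink and the last $a_{2m}$ vertices $T:=\{N-a_{2m}+1,\dots,N\}$ span a directed path $N-a_{2m}+1\to\cdots\to N$; moreover the unique edge joining $T$ to the rest of the graph is the last edge of the leftward block of length $a_{2m-1}$, hence is oriented $N-a_{2m}+1\to N-a_{2m}$, that is, \emph{into} $\{1,\dots,N-a_{2m}\}$. From the description of the three graphs one reads off that $\G_{r'/s'}$ is the full subgraph of $\G_{r/s}$ on $\{1,\dots,N-1\}$ (removing one right edge), while $\G_{r''/s''}$ is the full subgraph on $\{1,\dots,N-a_{2m}\}$ (removing the tail $T$), both with the induced orientation, which coincides with the one prescribed by the continued fraction.

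Then I would establish the two identifications. If a closure $\cC$ of $\G_{r/s}$ does not contain $N$, then $\cC\cap T=\emptyset$, because any vertex $v\in\cC\cap T$ would force the whole path from $v$ to $N$ into $\cC$; thus $\cC\subseteq\{1,\dots,N-a_{2m}\}$ and, being closed in $\G_{r/s}$, it is closed in the full subgraph $\G_{r''/s''}$. Conversely, every closure of $\G_{r''/s''}$ is a closure of $\G_{r/s}$, since the only edge leaving $\{1,\dots,N-a_{2m}\}$ points inward, so no edge runs from such a set to $T$. This is a size-preserving bijection and accounts for the term $\r''_{\ell+1}$. For the closures $\cC$ of $\G_{r/s}$ that do contain $N$, the assignment $\cC\mapsto\cC\setminus\{N\}$ is a bijection onto the closures of $\G_{r'/s'}$, with inverse $\cE\mapsto\cE\cup\{N\}$: since $N$ is a sink, deleting or adjoining it creates no new outgoing edge, and any offending edge inside $\{1,\dots,N-1\}$ would already violate closedness in $\G_{r'/s'}$ or in $\G_{r/s}$. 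This drops the cardinality by one, accounting for $\r'_\ell$. Adding the two contributions gives $\r_{\ell+1}=\r'_\ell+\r''_{\ell+1}$.

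The only step I expect to require care is the bookkeeping in the second paragraph: checking that, on the level of quivers, passing from $\frac rs$ to $\frac{r'}{s'}$ and to $\frac{r''}{s''}$ really amounts to deleting vertex $N$, resp. the tail $\{N-a_{2m}+1,\dots,N\}$, with the orientations matching up --- in particular that the ``reversal of the last base-down triangle'' in the triangulation leaves the induced orientation of the dual quiver unchanged. Everything after that is the elementary case analysis above, with no computation involved.
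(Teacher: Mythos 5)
Your proof is correct and takes essentially the same route as the paper's: partition the closures of $\G_{r/s}$ according to whether they contain the last vertex (a sink), identifying those that do with closures of $\G_{r'/s'}$ via deletion of that vertex, and those that do not with closures of $\G_{r''/s''}$. You merely spell out the details the paper leaves implicit, namely that the tail $\{N-a_{2m}+1,\dots,N\}$ is a directed path which is entirely forced into any closure meeting it, and that the unique edge joining the tail to the rest of the quiver points inward, so the two identifications are genuine bijections.
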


\begin{proof}
Every $\ell$-vertex closure of~$\G_{r'/s'}$ corresponds to an $(\ell+1)$-vertex closure of~$\G_{r/s}$
by adding the last right vertex of~$\G_{r/s}$ (which is a sink).
Furthermore, every $\ell$-vertex closure of~$\G_{r''/s''}$ corresponds to 
an $\ell$-vertex closure of~$\G_{r/s}$,
that contains no vertex from the
right $a_{2m}-2$ vertices.
Hence the lemma.
\end{proof}

By the induction assumption, the coefficients of the polynomials~$R'(q)$ and~$R''(q)$ count
the closure subsets of the graphs~$\G_{r'/s'}$ and~$\G_{r''/s''}$, respectively.
Comparing formulas~\eqref{CompCloEq} and~\eqref{CompCloEqBis},
we conclude that the similar property also holds for~$\Rc(q)$.

The arguments in the case of the polynomial~$\Sc(q)$ are similar.
Theorem~\ref{EnumThm} is proved under the assumption~$a_{2m}\geq3$.

Case {\bf B}. 
Assume that $a_{2m}=2$, then the triangulation~$\T_{r/s}$ and the graph~$\G_{r/s}$
end as follows
 $$
\xymatrix @!0 @R=0.8cm @C=1cm
{
&
&&\bullet\ar@{-}[rr]\ar@{-}[dd]\ar@{-}[lldd]
&&\frac{R'}{S'}\ar@{-}[lldd]\ar@{-}[rr]^q
&&\frac{R}{S}\ar@{-}[lllldd]
\\
\cdots&&\circ\ar@{<-}[r]&\circ\ar@{->}[r]&\circ\\
&\bullet\ar@{-}[rr]&&\frac{R''}{S''}
}
$$
The proof is similar in this case and is again based on~\eqref{CompCloEq} and~\eqref{CompCloEqBis}.

Case {\bf C}. 
Assume that~$a_{2m}=1$, 
then the triangulation~$\T_{r/s}$ and the graph~$\G_{r/s}$
end as follows
 $$
\xymatrix @!0 @R=0.8cm @C=1cm
{
&&&
&&\frac{\Rc'}{\Sc'}\ar@{-}[rrr]^{q^{a_{2m-1}+1}}\ar@{-}[dd]\ar@{-}[lldd]\ar@{-}[lllldd]
&&&\frac{\Rc}{\Sc}\ar@{-}[llldd]
\\
\cdots&&&\circ\ar@{<-}[r]&\circ\ar@{<-}[r]&\circ&\\
&\bullet\ar@{-}[rr]&&\bullet\ar@{-}[rr]&&\frac{\Rc''}{\Sc''}
}
$$
so that
$$
\Rc(q)=q^{a_{2m-1}+1}\Rc'(q)+\Rc''.
$$
The graph~$\G_{r'/s'}$ is obtained from~$\G_{r/s}$ by removing
$a_{2m-1}+1$ last vertices. 
Therefore, every $\ell$-vertex closure of~$\G_{r'/s'}$ corresponds to
$(\ell+a_{2m-1}+1)$-vertex closure of~~$\G_{r/s}$ by adding these vertices back.
On the other hand, an $\ell$-vertex closure of~$\G_{r''/s''}$ corresponds to
an $\ell$-vertex closure of~$\G_{r/s}$ that does not contain the last vertex.
We obtain, similarly to Lemma~\ref{CompCloEqBis},
$$
\r_{\ell}=\r'_{\ell-a_{2m-1}-1}+\r''_{\ell}.
$$
Theorem~\ref{EnumThm} is proved.

%%%%%%%%%%%%%%%%%%%%
%%%%%%%%%%%%%%%%%%%%
\section{Matrices of continued fractions and their $q$-deformations}\label{qMatSec}
%%%%%%%%%%%%%%%%%%%%
%%%%%%%%%%%%%%%%%%%%

It is convenient to represent continued fractions via unimodular $2\times2$~matrices
with integer coefficients.
In this section, we develop a similar technique in the $q$-deformed case.
This in particular allows us to prove Theorems~\ref{Thmqa},~\ref{PosPropBis} and several other technical statements.

%%%%%%%%%%%%%%%%%%%%
\subsection{Matrices of convergents}\label{ClassMatSec}
%%%%%%%%%%%%%%%%%%%%

We briefly recall the way to represent classical continued fractions by $2\times2$~matrices.
Given a rational~$\frac{r}{s}$ and its continued fractions expansions
$$
\frac{r}{s}=[a_1,\ldots,a_{2m}]=
\llbracket{}c_1,\ldots,c_k\rrbracket,
$$
consider the matrices
$$
\begin{array}{rcl}
M^+(a_1,\ldots,a_{2m}) &:=&
\left(
\begin{array}{cc}
a_1&1\\[4pt]
1&0
\end{array}
\right)
\left(
\begin{array}{cc}
a_2&1\\[4pt]
1&0
\end{array}
\right)
\cdots
\left(
\begin{array}{cc}
a_{2m}&1\\[4pt]
1&0
\end{array}
\right),
\\[16pt]
M(c_1,\ldots,c_k) &:=&
\left(
\begin{array}{cc}
c_1&-1\\[4pt]
1&0
\end{array}
\right)
\left(
\begin{array}{cc}
c_{2}&-1\\[4pt]
1&0
\end{array}
\right)
\cdots
\left(
\begin{array}{cc}
c_k&-1\\[4pt]
1&0
\end{array}
\right).
\end{array}
$$
They are well-known as the {\it matrices of convergents} of the continued fractions for the following reason.
One easily obtains:
$$
M^+(a_1,\ldots,a_{2m})=
\left(
\begin{array}{cc}
r&r'_{2m-1}\\[4pt]
s&s'_{2m-1}
\end{array}
\right),
\qquad
M(c_1,\ldots,c_k)=
\left(
\begin{array}{cc}
r&-r_{k-1}\\[4pt]
s&-s_{k-1}
\end{array}
\right),
$$
where
$\frac{r'_{2m-1}}{s'_{2m-1}}=[a_1,\ldots,a_{2m-1}]$,
and  $\frac{r_{k-1}}{s_{k-1}}=
\llbracket{}c_1,\ldots,c_{k-1}\rrbracket$.

In terms of the elementary matrices
$$
R=\left(
\begin{array}{cc}
1&1\\[4pt]
0&1
\end{array}
\right),
\qquad
L=\left(
\begin{array}{cc}
1&0\\[4pt]
1&1
\end{array}
\right),
\qquad
S=\left(
\begin{array}{cc}
0&-1\\[4pt]
1&0
\end{array}
\right),
$$
(note that $(R,S)$, or $(L,S)$ is the standard choice of generators of the group~$\SL(2,\Z)$)
the above matrices are expressed as follows:
$$
\begin{array}{rcl}
M^+(a_1,\ldots,a_{2m})&=&
R^{a_1}L^{a_{2}}
R^{a_{3}}L^{a_{4}}\cdots{}
R^{a_{2m-1}}L^{a_{2m}},
\\[4pt]
M(c_1,\ldots,c_k)&=&R^{c_1}S\,R^{c_{2}}S\cdots{}R^{c_k}S.
\end{array}
$$

\begin{rem}
Interest in the matrices $M^+(a_1,\ldots,a_{2m})$ and $M(c_1,\ldots,c_k)$ exceeds
the context of continued fractions; see, e.g.,~\cite{Zag,Kat,Boc,Ovs,FB1}.
Indeed, they appear in the theory of linear difference equations, as the
monodromy matrices. 
They play important roles in the theory of discrete integrable systems.
\end{rem}

%%%%%%%%%%%%%%%%%%%%
\subsection{$q$-deformation of the matrices of convergents}\label{DefMatSec}
%%%%%%%%%%%%%%%%%%%%

We define $q$-deformations of the matrices of convergents $M^{+}(a_{1}, \ldots, a_{2m})$ and $M(c_{1},\ldots, c_{n})$.

\begin{defn}
(i)
The matrix
\begin{equation}
\label{qNegMat}
M_{q}(c_{1},\ldots, c_{k})=
\begin{pmatrix}
[c_{1}]_{q}&-q^{c_{1}-1}\\[6pt]
1&0
\end{pmatrix}
\begin{pmatrix}
[c_{2}]_{q}&-q^{c_{2}-1}\\[6pt]
1&0
\end{pmatrix}
\cdots
\begin{pmatrix}
[c_{k}]_{q}&-q^{c_{k}-1}\\[6pt]
1&0
\end{pmatrix}
\end{equation}
is the $q$-analogue of the matrix $M(c_{1},\ldots, c_{k})$.

(ii)
The matrix
\begin{equation}
\label{qRegMat}
M^{+}_{q}(a_{1},\ldots, a_{2m})=
\begin{pmatrix}
[a_{1}]_{q}&q^{a_{1}}\\[6pt]
1&0
\end{pmatrix}
\begin{pmatrix}
[a_{2}]_{q^{-1}}& q^{-a_{2}}\\[6pt]
1&0
\end{pmatrix}
\cdots
\begin{pmatrix}
[a_{2m-1}]_{q}&q^{a_{2m-1}}\\[6pt]
1&0
\end{pmatrix}
\begin{pmatrix}
[a_{2m}]_{q^{-1}}&q^{-a_{2m}}\\[6pt]
1&0
\end{pmatrix}
\end{equation}
is the $q$-analogue of the matrix  $M^{+}(a_{1},a_{2}, \ldots, a_{2m})$.
\end{defn}
Note that the entries of the matrix ${M}^{+}_{q}(a_{1},a_{2}, \ldots, a_{2m})$ are polynomials in $q^{\pm1}$. 
We also define the following normalized matrix in order to get entries that are polynomials in $q$:
$$
\widetilde{M}^{+}_{q}(a_{1},\ldots, a_{2m}):=
q^{a_{2}+a_{4}+\ldots+a_{2m}}\,M^{+}_{q}(a_{1},\ldots, a_{2m}).
$$

The above definition is justified by the fact that the defined matrices are the matrices of convergents
of the $q$-deformed continued fractions~\eqref{qc} and~\eqref{qa}, respectively.

\begin{prop}
\label{FacMM}
(i) 
One has
$$M_{q}(c_{1},\ldots, c_{k})=
\begin{pmatrix}
\Rc&-q^{c_{k}-1}\Rc_{k-1}\\[6pt]
\Sc&-q^{c_{k}-1}\Sc_{k-1}
\end{pmatrix},
$$
where $\frac{\Rc(q)}{\Sc(q)}=
\llbracket{}c_{1}, \ldots,c_{k}\rrbracket_{q}$,  
$\frac{\Rc_{k-1}(q)}{\Sc_{k-1}(q)}=
\llbracket{}c_{1}, \ldots,c_{k-1}\rrbracket_{q}$.

(ii)
One has
$$
\widetilde{M}^{+}_{q}(a_{1},\ldots, a_{2m})=
\begin{pmatrix}
q\Rc&\Rc'_{2m-1}\\[6pt]
q\Sc&\Sc'_{2m-1}
\end{pmatrix}
$$
where $\frac{\Rc(q)}{\Sc(q)}=[a_{1}, \ldots, a_{2m}]_{q}$,  
$\frac{\Rc'_{2m-1}(q)}{\Sc'_{2m-1}(q)}=[a_{1}, \ldots,a_{2m-1}]_{q}$.
\end{prop}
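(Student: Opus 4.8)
\emph{Strategy.} Both identities follow the classical mechanism for matrices of convergents: multiply the factors one at a time and identify the columns using a two-term recurrence for the numerators and denominators.

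\emph{Part (i).} Induct on $k$. The case $k=1$ is the definition of the matrix, using $(\Rc_0,\Rc_1)=(1,[c_1]_q)$ and $(\Sc_0,\Sc_1)=(0,1)$. For the step, write $M_q(c_1,\dots,c_k)=M_q(c_1,\dots,c_{k-1})\left(\begin{smallmatrix}{[c_k]_q}&-q^{c_k-1}\\ 1&0\end{smallmatrix}\right)$, insert the inductive form of $M_q(c_1,\dots,c_{k-1})$, and multiply out. The first column of the product is $\bigl([c_k]_q\Rc_{k-1}-q^{c_{k-1}-1}\Rc_{k-2},\,[c_k]_q\Sc_{k-1}-q^{c_{k-1}-1}\Sc_{k-2}\bigr)^{\top}=(\Rc_k,\Sc_k)^{\top}$ by~\eqref{LinRecEq}, while the second column is $-q^{c_k-1}$ times the first column of $M_q(c_1,\dots,c_{k-1})$, namely $(-q^{c_k-1}\Rc_{k-1},-q^{c_k-1}\Sc_{k-1})^{\top}$. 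This is the asserted matrix.

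\emph{Part (ii).} I would first record the elementary fact that for any product $\prod_{j=1}^{N}\left(\begin{smallmatrix}\beta_j&\gamma_j\\ 1&0\end{smallmatrix}\right)$ the first column is the pair (numerator, denominator) of the generalized continued fraction $\beta_1+\cfrac{\gamma_1}{\beta_2+\cfrac{\gamma_2}{\ddots+\cfrac{\gamma_{N-1}}{\beta_N}}}$ and the second column is $\gamma_N$ times the first column of $\prod_{j=1}^{N-1}\left(\begin{smallmatrix}\beta_j&\gamma_j\\ 1&0\end{smallmatrix}\right)$ --- the same one-step induction as in part (i). Applied to~\eqref{qRegMat}, this shows that the first column of $M^+_q(a_1,\dots,a_{2m})$ is the numerator and denominator of the rational function~\eqref{qa}, and the second column is $q^{-a_{2m}}$ times that of $M^+_q(a_1,\dots,a_{2m-1})$. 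It then remains to identify these Laurent polynomials after normalization. For that I would rerun the convergent recurrence for~\eqref{qa}, rewriting $[a_i]_{q^{-1}}=q^{-(a_i-1)}[a_i]_q$ at each even step, and show by induction that multiplying the $i$-th truncation by $q^{\varepsilon_i}$ --- with $\varepsilon_i=a_2+a_4+\cdots+a_i-1$ for $i$ even and $\varepsilon_i=a_2+a_4+\cdots+a_{i-1}$ for $i$ odd --- produces a polynomial with constant term $1$ obeying a recurrence whose coefficients are monomials with positive coefficients. Since $\varepsilon_{2m}=a_2+\cdots+a_{2m}-1$, $\varepsilon_{2m-1}=a_2+\cdots+a_{2m-2}$ and $q^{a_2+\cdots+a_{2m}}q^{-a_{2m}}=q^{\varepsilon_{2m-1}}$, scaling the whole matrix by $q^{a_2+a_4+\cdots+a_{2m}}$ turns the first column into $(q\Rc,q\Sc)^{\top}$ and the second into $(\Rc'_{2m-1},\Sc'_{2m-1})^{\top}$. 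These $\Rc,\Sc$ are the normalized polynomials of~\eqref{RcSc}: coprimality follows by telescoping the $2\times2$ determinants of the factors of $M^+_q$ exactly as in \S\ref{propRS} (which also shows $\Rc\Sc'_{2m-1}-\Sc\Rc'_{2m-1}$ is a power of $q$), and the normalization $\Rc(1)=r$, $\Sc(1)=s$ by evaluating that determinant identity at $q=1$.

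\emph{Main obstacle.} The only genuine work is the normalization bookkeeping in part (ii): following the interplay between the $q$- and $q^{-1}$-layers in~\eqref{qa} --- including the mildly exceptional first two steps, where $[a_1]_q$ sits over $q^{0}$ while $[a_2]_{q^{-1}}$ already carries $q^{-(a_2-1)}$ --- and confirming that the single scalar $q^{a_2+a_4+\cdots+a_{2m}}$ simultaneously clears the denominators of \emph{both} columns and lands precisely on the coprime polynomials rather than on a monomial multiple of them. Equivalently, one must check that the lowest-degree term of the $(1,1)$ and $(2,1)$ entries of $M^+_q(a_1,\dots,a_{2m})$ is $q^{-(a_2+\cdots+a_{2m})+1}$ with coefficient $1$. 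Once this is in place the two matrix multiplications are mechanical; as a bonus, the positivity of the normalized recurrence gives an independent proof of Proposition~\ref{PosProp} for regular $q$-continued fractions.
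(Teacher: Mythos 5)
Your proof is correct and follows essentially the same route as the paper, which simply declares the proposition ``an immediate consequence of the linear recurrence relations for the convergents'' of \S\ref{propRS}: in both cases the columns of the matrix product are identified with the convergents via the three-term recurrence. The only difference is that you spell out the normalization bookkeeping for part (ii) — tracking the powers of $q$ coming from the alternating $q$/$q^{-1}$ layers and checking that the single factor $q^{a_2+\cdots+a_{2m}}$ lands on the coprime polynomials — which the paper leaves implicit; your exponents $\varepsilon_i$ and the resulting identifications are correct.
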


\begin{proof}
This statement is an immediate consequence of the linear recurrence relations for the convergents;
cf. Section~\ref{propRS}.
\end{proof}

%%%%%%%%%%%%%%%%%%%%
\subsection{$q$-deformation of the generators}\label{MatGenSec}
%%%%%%%%%%%%%%%%%%%%

Let us introduce $q$-analogues of the matrices~$R,L$ and~$S$.

\begin{defn}
\label{RqLqSq}
{\rm
The $q$-analogues of the matrices~$R,L$ and~$S$ are
the following elements of~$\GL(2,\Z[q,q^{-1}])$
$$
R_{q}:=
\begin{pmatrix}
q&1\\[4pt]
0&1
\end{pmatrix},
\qquad
L_{q}:=
\begin{pmatrix}
1&0\\[4pt]
1&q^{-1}
\end{pmatrix},
\qquad
S_{q}:=
\begin{pmatrix}
0&-q^{-1}\\[4pt]
1&0
\end{pmatrix}.
$$
}
\end{defn}

\begin{prop}
\label{PropMM}
One has
\begin{eqnarray}
\label{MaGen}
M^{+}_{q}(a_{1},\ldots, a_{2m})&=&
R_{q}^{a_{1}}L_{q}^{a_{2}}\cdots R_{q}^{a_{2m-1}}L_{q}^{a_{2m}},\\[6pt]
\label{McGen}
M_{q}(c_{1},\ldots, c_{k})&=&
R_{q}^{c_{1}}S_{q}R_{q}^{c_{2}}S_{q} \cdots R_{q}^{c_{k}}S_{q}.
\end{eqnarray}
\end{prop}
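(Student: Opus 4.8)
The plan is to reduce both identities \eqref{MaGen} and \eqref{McGen} to elementary $2\times2$ matrix arithmetic, after first recording the closed forms of the powers of the generators. A one-line induction on the exponent (using $[a]_q=1+q[a-1]_q$ and $[b]_{q^{-1}}=1+q^{-1}[b-1]_{q^{-1}}$) gives
$$
R_q^{\,a}=\begin{pmatrix} q^{a} & [a]_q\\[4pt] 0 & 1\end{pmatrix},
\qquad
L_q^{\,b}=\begin{pmatrix} 1 & 0\\[4pt] [b]_{q^{-1}} & q^{-b}\end{pmatrix},
$$
valid for all non-negative integers $a,b$.

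For \eqref{McGen} I would simply compute
$$
R_q^{\,c}\,S_q=\begin{pmatrix} q^{c} & [c]_q\\[4pt] 0 & 1\end{pmatrix}\begin{pmatrix} 0 & -q^{-1}\\[4pt] 1 & 0\end{pmatrix}=\begin{pmatrix} [c]_q & -q^{c-1}\\[4pt] 1 & 0\end{pmatrix},
$$
and observe that the right-hand side is exactly the $c$-th elementary factor appearing in the definition \eqref{qNegMat} of $M_q(c_1,\dots,c_k)$. Substituting $R_q^{\,c_i}S_q$ for each factor in that product immediately yields $M_q(c_1,\dots,c_k)=R_q^{\,c_1}S_q R_q^{\,c_2}S_q\cdots R_q^{\,c_k}S_q$.

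For \eqref{MaGen} the factors of \eqref{qRegMat} alternate between $q$ and $q^{-1}$, so rather than matching single factors I would pair them two at a time. A direct computation gives
$$
R_q^{\,a}\,L_q^{\,b}=\begin{pmatrix} q^{a} & [a]_q\\[4pt] 0 & 1\end{pmatrix}\begin{pmatrix} 1 & 0\\[4pt] [b]_{q^{-1}} & q^{-b}\end{pmatrix}=\begin{pmatrix} q^{a}+[a]_q[b]_{q^{-1}} & [a]_q\,q^{-b}\\[4pt] [b]_{q^{-1}} & q^{-b}\end{pmatrix},
$$
and on the other hand multiplying a consecutive odd/even pair of elementary factors of \eqref{qRegMat} gives the same matrix,
$$
\begin{pmatrix} [a]_q & q^{a}\\[4pt] 1 & 0\end{pmatrix}\begin{pmatrix} [b]_{q^{-1}} & q^{-b}\\[4pt] 1 & 0\end{pmatrix}=\begin{pmatrix} [a]_q[b]_{q^{-1}}+q^{a} & [a]_q\,q^{-b}\\[4pt] [b]_{q^{-1}} & q^{-b}\end{pmatrix}.
$$
Since this identification holds for each of the $m$ pairs $(a_{2i-1},a_{2i})$ independently, grouping the $2m$ factors of $M^{+}_q(a_1,\dots,a_{2m})$ into these $m$ blocks and replacing each block by $R_q^{\,a_{2i-1}}L_q^{\,a_{2i}}$ yields \eqref{MaGen}. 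I do not expect any genuine obstacle: the argument is pure $2\times2$ bookkeeping, and the only point needing care is the alternation $q\leftrightarrow q^{-1}$, which is exactly why pairing the factors into $R_q^{\,a}L_q^{\,b}$ blocks — instead of trying to match one factor at a time — is the natural step.
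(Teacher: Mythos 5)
Your proposal is correct and follows essentially the same route as the paper: both establish the closed forms of $R_q^{\,a}$ and $L_q^{\,a}$, then verify $R_q^{\,c}S_q$ equals the elementary factor of~\eqref{qNegMat} and that $R_q^{\,a}L_q^{\,b}$ equals the product of a consecutive odd/even pair of factors of~\eqref{qRegMat}. All matrix computations check out.
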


\begin{proof}
One easily gets the following expressions for the iterates of $R_{q}$ and $L_{q}$:
$$
R^{a}_{q}:=
\begin{pmatrix}
q^{a}&[a]_{q}\\[4pt]
0&1
\end{pmatrix},
\qquad
L^{a}_{q}:=
\begin{pmatrix}
1&0\\[4pt]
[a]_{q^{-1}}&q^{-a}
\end{pmatrix}.
$$
By direct computations one checks
$$
R_{q}^{a_{i}}L_{q}^{a_{i+1}}=
\begin{pmatrix}
q^{a_{i}}+[a_{i}]_{q}[a_{i+1}]_{q^{-1}}&q^{-a_{i+1}}[a_{i}]_{q}\\[4pt]
[a_{i+1}]_{q^{-1}}&q^{-a_{i+1}}
\end{pmatrix}
=\begin{pmatrix}
[a_{i}]_{q}&q^{a_{i}}\\[6pt]
1&0
\end{pmatrix}
\begin{pmatrix}
[a_{i+1}]_{q^{-1}}& q^{-a_{i+1}}\\[6pt]
1&0
\end{pmatrix}.$$
Hence the formula for ${M}^{+}_{q}(a_{1},\ldots, a_{2m})$. 

Similarly, one easily checks
$$
R^{c_{i}}_{q}S_{q}=\begin{pmatrix}
[c_{i}]_{q}&-q^{c_{i}-1}\\[6pt]
1&0
\end{pmatrix}.
$$
Hence the formula for $M_{q}(c_{1},\ldots, c_{k})$. 
\end{proof}

%%%%%%%%%%%%%%%%%%%%
\subsection{Local surgery operations}
%%%%%%%%%%%%%%%%%%%%

We describe two useful ``local surgery'' operations.
These are $q$-analogues of the operations used in~\cite{Ovs,FB1}.

The first operation inserts~$1$ into the sequence $(c_1,c_2,\ldots,c_n)$, 
increasing the two neighboring entries by~$1$;
the second operation breaks one entry,~$c_i$, replacing it by
$c'_i,c''_i\in\Z_{>0}$ such that
$
c'_i+c''_i=c_i+1,
$ 
and inserts two consecutive~$1$'s between them.

\begin{lem}
\label{qLSLem}
The matrix $M_q(c_1,\ldots,c_n)$ has the following identities:
\begin{eqnarray}
\label{qFirstSurM}
 M_q(c_1,\ldots,c_i+1,\,1,\,c_{i+1}+1,\ldots,c_n) &=&
q\,M_q(c_1,\ldots,c_i,c_{i+1},\ldots,c_n),\\[6pt]
\label{qSecSurM}
M_q(c_1,\ldots,c'_i,\,1,\,\,1,\,c''_i,\ldots,c_n) &=&
-M_q(c_1,\ldots,c_i,\ldots,c_n),
\end{eqnarray}
where $c'_i+c''_i=c_i+1$.
\end{lem}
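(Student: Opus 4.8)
The plan is to prove both identities \eqref{qFirstSurM} and \eqref{qSecSurM} by reducing them to purely local matrix identities, using the factorization from Proposition~\ref{PropMM}. By \eqref{McGen}, each matrix $M_q(c_1,\ldots,c_n)$ is a product $R_q^{c_1}S_q\cdots R_q^{c_n}S_q$, so inserting entries or breaking one entry only changes a bounded-length consecutive block of this product. Everything outside the affected block is a common left/right factor that can be cancelled, so it suffices to verify the corresponding identity for the short products of $R_q$'s and $S_q$'s. Concretely, I would first record the elementary formula
$$
R_q^a S_q=\begin{pmatrix}[a]_q&-q^{a-1}\\[4pt]1&0\end{pmatrix},
$$
already established in the proof of Proposition~\ref{PropMM}, together with $R_q^1 S_q=\begin{pmatrix}q&-1\\ 1&0\end{pmatrix}=\begin{pmatrix}1&-1\\1&0\end{pmatrix}\cdot\mathrm{diag}(q,1)$ if that turns out convenient; in any case the key objects are the $2\times 2$ matrices $R_q^{c}S_q$.

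For \eqref{qFirstSurM}, the affected block on the left-hand side is $R_q^{c_i+1}S_q\,R_q^{1}S_q\,R_q^{c_{i+1}+1}S_q$ and on the right-hand side it is $R_q^{c_i}S_q\,R_q^{c_{i+1}}S_q$ (with the convention that if $i=0$ or $i+1=n+1$ one of the outer factors is replaced by the appropriate boundary matrix; these boundary cases I would check separately but they are of the same nature). So the single computation to perform is
$$
R_q^{\,c+1}S_q\,R_q^{\,1}S_q\,R_q^{\,d+1}S_q \;=\; q\,R_q^{\,c}S_q\,R_q^{\,d}S_q,
$$
which follows by multiplying out the three $2\times 2$ matrices on the left using the formula above and the identity $[c+1]_q=1+q[c]_q$, and comparing with the right-hand side. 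For \eqref{qSecSurM}, with $c'_i+c''_i=c_i+1$, the local identity to check is
$$
R_q^{\,c'}S_q\,R_q^{\,1}S_q\,R_q^{\,1}S_q\,R_q^{\,c''}S_q \;=\; -\,R_q^{\,c}S_q,\qquad c=c'+c''-1,
$$
again a direct product of four explicit $2\times 2$ matrices; here one uses $R_q^{1}S_q\,R_q^{1}S_q=\begin{pmatrix}q-1&-q\\ 1&-1\end{pmatrix}$ (or rather its correct value, to be computed from the displayed formula) and the additivity of the $q$-bracket in the form $[c'+c''-1]_q=[c']_q+q^{c'}[c''-1]_q=[c']_q+q^{c'}([c'']_q-1)$ or a suitable rearrangement, to reconcile the two sides.

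The only real work is the bookkeeping in these two finite matrix computations, and the main potential obstacle is handling the boundary terms $i=0$ and $i+1=n+1$ correctly, where the ``inserted'' entry sits at the very beginning or very end of the sequence: there the block being modified abuts no $R_q^{c}S_q$ factor on one side, so one must either interpret $c_0$ or $c_{n+1}$ via the standard convention for the matrices of convergents, or argue directly on the first/last factor. I would dispose of this by noting that $M_q$ is defined for any sequence of positive integers and that the insertion moves stay within the class of such sequences, so the reduction to a local identity is legitimate in all cases, and then simply checking the two or three degenerate short products by hand. Once the local identities hold, \eqref{qFirstSurM} and \eqref{qSecSurM} follow immediately by cancelling the common prefix $R_q^{c_1}S_q\cdots R_q^{c_{i-1}}S_q$ and suffix $R_q^{c_{i+1}}S_q\cdots R_q^{c_n}S_q$ (suitably relabelled) on both sides.
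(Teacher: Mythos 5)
Your proposal is correct and follows essentially the same route as the paper: the paper also reduces both identities to local $2\times2$ computations on the blocks $R_q^{c}S_q=\begin{pmatrix}[c]_q&-q^{c-1}\\ 1&0\end{pmatrix}$, verifying exactly the two products you describe (using $[c+1]_q=[c]_q+q^{c}$ and $[c'+c''+1]_q=[c'+1]_q+q^{c'}[c'']_q$ to reconcile the entries). Two minor points: the boundary cases you worry about do not actually arise, since both surgery operations require the modified entries to be interior to the sequence as written; and your tentative value of $(R_qS_q)^2$ should be $\begin{pmatrix}0&-1\\ 1&-1\end{pmatrix}$, as you anticipated it would need correcting.
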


\begin{proof}
Formula~\eqref{qFirstSurM} follows from the following simple computation:
$$
\begin{pmatrix}
[c+1]_{q}&-q^{c}\\[4pt]
1&0
\end{pmatrix}
\begin{pmatrix}
1&-1\\[4pt]
1&0
\end{pmatrix}
\begin{pmatrix}
[d+1]_{q}&-q^{d}\\[4pt]
1&0
\end{pmatrix}=
q
\begin{pmatrix}
[c]_{q}&-q^{c-1}\\[4pt]
1&0
\end{pmatrix}
\begin{pmatrix}
[d]_{q}&-q^{d-1}\\[4pt]
1&0
\end{pmatrix}.
$$
To prove~\eqref{qSecSurM}, consider the product of matrices
$$
\begin{pmatrix}
[c'+1]_{q}&-q^{c'}\\[4pt]
1&0
\end{pmatrix}
\begin{pmatrix}
0&-1\\[4pt]
1&-1
\end{pmatrix}
\begin{pmatrix}
[c''+1]_{q}&-q^{c''}\\[4pt]
1&0
\end{pmatrix}=
\begin{pmatrix}
q^{c'}-[c'+1]_{q}-q^{c'}[c''+1]_{q}&q^{c'+c''}\\[4pt]
-1&0
\end{pmatrix}
$$
and observe that $q^{c'}-[c'+1]_{q}-q^{c'}[c''+1]_{q}=-[c'+c''+1]_{q}$.
\end{proof}

\begin{rem}
The operation~\eqref{qSecSurM} and the corresponding computation is a variation on the
following analogue of the ``sum of $q$-integers'':
$$
[a]_q\,+_q\,[b]_q:=
[a]_q+q^a[b]_q=
q^b[a]_q+[b]_q=
[a+b]_q,
$$
sometimes used in the $q$-differential calculus.
\end{rem}

%%%%%%%%%%%%%%%%%%%%
\subsection{Solving the equation $M_q(c_1,\ldots,c_n)=
\pm{}q^\ell\,\Id$}\label{qCoCoIntSec}
%%%%%%%%%%%%%%%%%%%%

The theorem of Conway and Coxeter~\cite{CoCo} and its generalization~\cite{Ovs}
classify positive integer solutions of the equation
$M(c_1,\ldots,c_n)=\pm\Id$.
They are formulated in terms of dissections of a convex $n$-gon. 
We give a $q$-analogue of these theorems.

We believe that Lemma~\ref{qLSLem} may have an independent interest.
Here we use it in order to solve the equation
$$
M_q(c_1,\ldots,c_n)=
\pm{}q^\ell\,\Id,
$$
where $c_i$ are positive integers.

Consider a convex $n$-gon and its dissection by non-crossing diagonals.
A class of dissections, such that the number of vertices of every sub-polygon is a multiple of~$3$
was considered in~\cite{Ovs} and called ``$3d$-dissections''.
Following~\cite{CoCo}, call a ``quiddity'' of a dissection the sequence of positive integers,
$(c_{1}, \ldots, c_{n})$, such that every~$c_i$ is the number of sub-polygons incident with the $i$-th vertex.

\begin{cor}
\label{qCoCoOvThm}
Let $(c_{1}, \ldots, c_{n})$ be a sequence of positive integers.

(i) If $(c_{1}, \ldots, c_{n})$ is the quiddity sequence of a triangulated $n$-gon then
$
M_{q}(c_{1},\ldots, c_{n})=
-q^{n-3}\,
\Id
$
.

(ii)
The equality
$
M_{q}(c_{1},\ldots, c_{n})=
\pm q^{n-3}\,
\Id
$
holds if and only if $(c_{1}, \ldots, c_{n})$ is the quiddity sequence of a $3d$-dissected $n$-gon.
\end{cor}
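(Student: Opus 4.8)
\textbf{Proof plan for Corollary~\ref{qCoCoOvThm}.}
The plan is to bootstrap from the classical theorem of Conway--Coxeter and its generalization in~\cite{Ovs}, carrying the group-theoretic arguments into the $q$-deformed setting by means of Lemma~\ref{qLSLem}. First I would establish part~(i) by induction on $n$. The base case $n=3$ is the computation
$$
M_q(1,1,1)=
\begin{pmatrix}[1]_q&-1\\1&0\end{pmatrix}^3
=\begin{pmatrix}1&-1\\1&0\end{pmatrix}^3
=-\,\Id=-q^{0}\,\Id,
$$
and since $\frac13\cdot(q^{c_1-1})$-type factors all reduce to the classical ones at the base, this is just the classical identity $M(1,1,1)=-\Id$. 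For the inductive step, given a triangulated $n$-gon with $n\geq 4$, there is always an ``ear'', i.e.\ a vertex $i$ with $c_i=1$; its two neighbours $c_{i-1}$ and $c_{i+1}$ then have values $\geq 2$, and removing the ear yields the quiddity $(\ldots,c_{i-1}-1,c_{i+1}-1,\ldots)$ of a triangulated $(n-1)$-gon. Applying~\eqref{qFirstSurM} in reverse gives
$$
M_q(c_1,\ldots,c_{i-1},1,c_{i+1},\ldots,c_n)=q\,M_q(c_1,\ldots,c_{i-1}-1,c_{i+1}-1,\ldots,c_n),
$$
so by the induction hypothesis the right-hand side equals $q\cdot(-q^{n-4}\,\Id)=-q^{n-3}\,\Id$, which closes the induction.

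For part~(ii), the ``only if'' direction follows a similar inductive scheme but now applied to an arbitrary $3d$-dissection. Here I would use the structure of $3d$-dissections established in~\cite{Ovs}: any $3d$-dissected $n$-gon either has an ear (a triangular cell cut off by a single diagonal, contributing a vertex of quiddity $1$), in which case~\eqref{qFirstSurM} again reduces $n$ to a smaller $3d$-dissected polygon multiplying the matrix by $q$; or else it can be cut along a diagonal into two $3d$-dissected polygons with an overlap of $2$ vertices, and one combines the two factorizations, the $S_q$-type bookkeeping producing the extra $-q$ factors exactly as in the classical case of~\cite{Ovs}. Tracking the power of $q$ through these reductions, one checks that a $3d$-dissection with $n$ vertices always yields $\pm q^{n-3}\,\Id$, with the sign determined by the number of cells (equivalently, by whether the number of diagonals is even or odd), matching the classical sign rule.

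The converse (``if''): suppose $M_q(c_1,\ldots,c_n)=\pm q^{n-3}\,\Id$ with all $c_i\geq 1$ positive integers. I would specialize $q=1$ to recover $M(c_1,\ldots,c_n)=\pm\Id$, and invoke the generalized Conway--Coxeter theorem of~\cite{Ovs} to conclude that $(c_1,\ldots,c_n)$ is the quiddity sequence of some $3d$-dissected $n$-gon. Then part~(ii) ``only if'' (just proved) pins down the power of $q$ and the sign, ensuring consistency; the only subtlety is to argue that no spurious solutions are introduced by the $q$-deformation, but since the $q=1$ specialization is faithful on the level of ``$\pm\Id$ up to a power of $q$'' (the power is forced to be $n-3$ by the degree/determinant bookkeeping: $\det M_q(c_1,\ldots,c_n)=\prod q^{c_i-1}=q^{\sum c_i-n}$, and for a dissection $\sum c_i = n + 2\cdot(\#\text{cells}-1)$, while $(n-3)\cdot 2$ matches $\sum c_i - n$ only for triangulations — in general one uses $\det(\pm q^{n-3}\Id)=q^{2n-6}$ to read off the cell count), there is nothing extra to check.

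The step I expect to be the main obstacle is the ``only if'' direction of part~(ii): the inductive decomposition of a general $3d$-dissection (as opposed to a triangulation) requires the full combinatorial analysis of~\cite{Ovs} — identifying ears versus separating diagonals — and one must verify that the two surgery identities~\eqref{qFirstSurM} and~\eqref{qSecSurM} suffice to reduce \emph{every} $3d$-dissection to the base case while correctly accumulating the $q$-powers and signs. The classical arguments of~\cite{CoCo,Ovs} should transfer essentially verbatim, with Lemma~\ref{qLSLem} playing the role of the classical surgery lemmas, but the bookkeeping of the exponent $\ell=n-3$ is where care is needed.
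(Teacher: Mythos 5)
Your treatment of part (i) is correct and is exactly what the paper intends: the paper's proof is a one-line deferral to the induction of~\cite{Ovs} carried out with Lemma~\ref{qLSLem}, and your ear-removal induction via~\eqref{qFirstSurM}, with base case $M_q(1,1,1)=-\Id$, fills that in. (One implicit point: the ear you remove must sit at an interior position $2\leq i\leq n-1$ for~\eqref{qFirstSurM} to apply literally; this is always arrangeable, since a triangulated $n$-gon with $n\geq4$ has at least two ears and no two ears are cyclically adjacent.) Your $q=1$ specialization in the converse of (ii), followed by an appeal to the classical classification of~\cite{Ovs}, is also a clean and legitimate shortcut that avoids redoing the combinatorics in the $q$-deformed setting.

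The genuine gap is in the ``if'' direction of part (ii): the step ``one checks that a $3d$-dissection with $n$ vertices always yields $\pm q^{n-3}\,\Id$'' cannot be carried out, because it is not true. Each application of~\eqref{qFirstSurM} contributes one factor of $q$ and one new cell, \eqref{qSecSurM} contributes only a sign, and a single undissected $3d$-gon contributes $M_q(1,\ldots,1)=(R_qS_q)^{3d}=(-1)^d\,\Id$ with no power of $q$ at all; so the accumulated exponent is $k-1$, where $k$ is the number of cells of the dissection, not $n-3$. Your own determinant bookkeeping already forces this: $\det M_q(c_1,\ldots,c_n)=q^{\sum c_i-n}$ and $\sum c_i=n+2(k-1)$ for a dissection into $k$ cells, so an identity $M_q=\pm q^{\ell}\,\Id$ can only hold with $\ell=k-1$, and $\ell=n-3$ occurs exactly when $k=n-2$, i.e.\ for triangulations. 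A concrete failure of the claimed bookkeeping: the hexagon with no diagonals is a $3d$-dissection with quiddity $(1,1,1,1,1,1)$, yet $M_q(1,1,1,1,1,1)=(R_qS_q)^6=\Id$, which is not $\pm q^{3}\,\Id$. You noticed this tension in your parenthetical remark about the cell count but did not resolve it. What your argument actually establishes is the statement with a variable exponent --- $M_q(c_1,\ldots,c_n)=\pm q^{\ell}\,\Id$ for some $\ell\geq0$ if and only if $(c_1,\ldots,c_n)$ is the quiddity of a $3d$-dissected $n$-gon, with $\ell$ then equal to the number of cells minus one (this is the equation announced in the section heading); the fixed exponent $n-3$ singles out the triangulations, which is part (i). Any write-up must either adjust the exponent in (ii) accordingly or acknowledge that the ``if'' direction with exponent $n-3$ reduces to the triangulated case.
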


The proof of this statement is similar to the proof of Theorem~1 of~\cite{Ovs}, using the relations of Lemma~\ref{qLSLem}.

%%%%%%%%%%%%%%%%%%%%
\subsection{Proof of Theorem~\ref{Thmqa}}\label{PT1Sec}
%%%%%%%%%%%%%%%%%%%%

Our proof of Theorem~\ref{Thmqa} is a direct computation with the matrices of convergents.

\begin{prop}
\label{PropMMBis}
 If $\frac{r}{s}=[a_{1}, \ldots, a_{2m}]=\llbracket{}c_{1}, \ldots,c_{k}\rrbracket$, then
 the $q$-deformed matrices of the continued fractions satisfy
$$
\widetilde{M}^{+}_{q}(a_{1},\ldots, a_{2m})=M_{q}(c_{1},\ldots, c_{k})R_{q}.$$
\end{prop}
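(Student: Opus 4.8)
The statement to prove is the matrix identity
$$
\widetilde{M}^{+}_{q}(a_{1},\ldots,a_{2m})=M_{q}(c_{1},\ldots,c_{k})\,R_{q},
$$
where $(c_1,\ldots,c_k)$ is obtained from $(a_1,\ldots,a_{2m})$ by the Hirzebruch--Jung type formula~\eqref{HZRegEqt}. The plan is to reduce both sides to products of the $q$-deformed generators $R_q,L_q,S_q$ using Proposition~\ref{PropMM}, and then to transform one word into the other using the local surgery relation~\eqref{qFirstSurM} of Lemma~\ref{qLSLem}. Once this proposition is established, Theorem~\ref{Thmqa} follows at once: by Proposition~\ref{FacMM}, the first column of $\widetilde{M}^{+}_{q}$ is $(q\Rc,q\Sc)^{t}$ with $\frac{\Rc}{\Sc}=[a_1,\ldots,a_{2m}]_q$, while the first column of $M_q(c_1,\ldots,c_k)R_q$ is $q$ times the first column of $M_q(c_1,\ldots,c_k)$, i.e.\ $(q\widetilde{\Rc},q\widetilde{\Sc})^{t}$ with $\frac{\widetilde{\Rc}}{\widetilde{\Sc}}=\llbracket c_1,\ldots,c_k\rrbracket_q$; equating gives $[a_1,\ldots,a_{2m}]_q=\llbracket c_1,\ldots,c_k\rrbracket_q$.

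\textbf{Key steps.} First I would record the elementary identity $R_q^{a}L_q=$ (something explicit); more precisely the right tool is the factorization, valid for $a\geq 1$,
$$
R_q^{\,a}L_q = q^{-1}\,S_q^{-1}R_q^{\,a+1}S_q \cdot (\text{correction}),
$$
but rather than guess this I would instead work from the known conversion at the level of words. Recall (Section~\ref{ClassMatSec}) that in the classical case $R^{a_1}L^{a_2}\cdots R^{a_{2m-1}}L^{a_{2m}}=R^{c_1}SR^{c_2}S\cdots R^{c_k}S$ up to the single extra factor $R$ on the right; the cleanest route is to prove the $q$-analogue of this word identity directly. Using Proposition~\ref{PropMM}, the left side $\widetilde{M}^{+}_{q}=q^{a_2+a_4+\cdots+a_{2m}}R_q^{a_1}L_q^{a_2}\cdots R_q^{a_{2m-1}}L_q^{a_{2m}}$ and the right side $M_q(c_1,\ldots,c_k)R_q=R_q^{c_1}S_qR_q^{c_2}S_q\cdots R_q^{c_k}S_qR_q$. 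So it suffices to check the single ``block'' identity
$$
q^{a_{2j}}\,R_q^{\,a_{2j-1}}L_q^{\,a_{2j}} \;=\; \Bigl(\text{the corresponding sub-word in }R_q,S_q\Bigr),
$$
i.e.\ the piece of~\eqref{HZRegEqt} reading $a_{2j-1}\mapsto(\ldots,2,\ldots)$ with $a_{2j}-1$ twos. Concretely, I expect to verify
$$
q\,L_q = R_q^{\,2}S_q^{-1}\quad\text{(or an equivalent rearrangement)},\qquad
R_q^{a}S_q = \begin{pmatrix}[a]_q & -q^{a-1}\\ 1 & 0\end{pmatrix},
$$
the latter already being in the proof of Proposition~\ref{PropMM}. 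Then induction on $m$: peel off the last block $R_q^{a_{2m-1}}L_q^{a_{2m}}$ from the left side, rewrite it as $R_q^{a_{2m-1}+2}S_qR_q^{2}S_q\cdots R_q^{2}S_q$ ($a_{2m}-1$ copies of $R_q^2S_q$) times the needed normalization power of $q$, absorb the trailing $S_q$ and the final $R_q$ correctly, and invoke the inductive hypothesis on $[a_1,\ldots,a_{2m-2}]$. The surgery relation~\eqref{qFirstSurM} is exactly what guarantees that inserting a $1$ between entries $c_i$ and $c_{i+1}$ (raising both by one) multiplies $M_q$ by $q$ — this is the $q$-bookkeeping that produces the prefactor $q^{a_2+a_4+\cdots+a_{2m}}$ matching the definition of $\widetilde{M}^{+}_{q}$.

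\textbf{Main obstacle.} The routine part is the $2\times 2$ matrix algebra; the subtle part is tracking the powers of $q$. The classical identity is a statement in $\SL(2,\Z)$ with no scalars, but in the $q$-world each ``insert a $1$'' surgery contributes a factor $q$ and each ``break an entry'' surgery contributes $-1$, so the bookkeeping of the scalar must be done carefully to land exactly on $q^{a_2+a_4+\cdots+a_{2m}}$ with the correct sign (the problem statement has no sign, so all the $-1$'s must cancel — they do because~\eqref{HZRegEqt} only uses the ``insert $1$'' move~\eqref{qFirstSurM}, never the ``break'' move~\eqref{qSecSurM}). Getting the conjugation by $S_q$ right when converting each $L_q$-block into an $R_q,S_q$-word, and checking that the single leftover $R_q$ at the very end is correctly placed (it comes from the fact that the regular expansion has length $2m$ whereas the negative one produces $2k$ letters $R_q,S_q$ plus one trailing $R_q$), is where I expect to spend the most care; an explicit small check with $m=1$, say $\frac{r}{s}=[a_1,a_2]$ versus $\llbracket a_1+1,2,\ldots,2\rrbracket$ with $a_2-1$ twos, should pin down all conventions before running the induction.
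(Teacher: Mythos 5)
Your overall strategy is the same as the paper's: apply the conversion formula~\eqref{HZRegEqt}, use Proposition~\ref{PropMM} to write $M_q(c_1,\ldots,c_k)R_q$ as a word in $R_q$ and $S_q$, and transform that word into $q^{a_2+a_4+\cdots+a_{2m}}R_q^{a_1}L_q^{a_2}\cdots R_q^{a_{2m-1}}L_q^{a_{2m}}$. However, the pivotal identity you write down, $qL_q=R_q^2S_q^{-1}$, is false (already at $q=1$: $R^2S^{-1}=\left(\begin{smallmatrix}-2&1\\-1&0\end{smallmatrix}\right)\neq L$); the correct relation is $R_qS_qR_q=qL_q$, hence $(R_qS_qR_q)^a=q^aL_q^a$. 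With that identity the conversion is a one-line regrouping of parentheses, $R_q^{a_1+1}S_q(R_q^2S_q)^{a_2-1}\cdots R_q^{a_{2m-1}+2}S_q(R_q^2S_q)^{a_{2m}-1}R_q=R_q^{a_1}(R_qS_qR_q)^{a_2}\cdots R_q^{a_{2m-1}}(R_qS_qR_q)^{a_{2m}}$, and no induction on $m$ nor any appeal to the surgery relation~\eqref{qFirstSurM} is needed; the prefactor $q^{a_2+a_4+\cdots+a_{2m}}$ falls out automatically. Since you flag the identity as a guess to be pinned down by a small check, this is a recoverable slip rather than a structural flaw, but as written the key step of your argument does not hold.
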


\begin{proof}
Suppose that $\frac{r}{s}=[a_{1}, \ldots, a_{2m}]=\llbracket{}c_{1}, \ldots,c_{k}\rrbracket$  and  use
the conversion formula \eqref{HZRegEqt}. One has
\begin{eqnarray*}
M_{q}(c_{1},\ldots, c_{k})&=&
M_{q}\big(a_1+1,\underbrace{2,\ldots,2}_{a_2-1},\,
a_3+2,\underbrace{2,\ldots,2}_{a_4-1},\ldots,
a_{2m-1}+2,\underbrace{2,\ldots,2}_{a_{2m-1}}\big)
\end{eqnarray*}
This can be rewritten using Proposition~\ref{PropMM} and changing the parentheses as
\begin{eqnarray*}
M_{q}(c_{1},\ldots, c_{k})R_{q}
&=&
R_{q}^{a_{1}+1}S_{q}\left(R^{2}_{q}S_{q}\right)^{a_{2}-1}
R_{q}^{a_{3}+2}S_{q}\left(R^{2}_{q}S_{q}\right)^{a_{4}-1}
\cdots 
R_{q}^{a_{2m-1}+2}S_{q}\left(R^{2}_{q}S_{q}\right)^{a_{2m}-1}R_{q}\\[4pt]
&=&
R_{q}^{a_{1}}{\left(R_{q}S_{q}R_{q}\right)^{a_{2}} }R_{q}^{a_{3}}
{\left(R_{q}S_{q}R_{q}\right)^{a_{4}} }
\cdots R_{q}^{a_{2m-1}}
{\left(R_{q}S_{q}R_{q}\right)^{a_{2m}} }.
\end{eqnarray*}
One then checks by a direct computation that
$$
R_{q}S_{q}R_{q}=qL_{q},
$$
and therefore $(R_{q}S_{q}R_{q})^{a}=q^{a}L_{q}^{a}$. This yields
\begin{eqnarray*}
M_{q}(c_{1},\ldots, c_{k})R_{q}
&=&q^{a_{2}+a_{4}+\ldots+a_{2m}}R_{q}^{a_{1}}L_{q}^{a_{2}}
R_{q}^{a_{3}}L_{q}^{a_{4}}\cdots R_{q}^{a_{2m-1}}L_{q}^{a_{2m}}\\[4pt]
&=&q^{a_{2}+a_{4}+\ldots+a_{2m}}{M}^{+}_{q}(a_{1},\ldots, a_{2m})\\[4pt]
&=&\widetilde{M}^{+}_{q}(a_{1},\ldots, a_{2m}).
\end{eqnarray*}
Proposition~\ref{PropMMBis} is proved.
\end{proof}

Theorem~\ref{Thmqa} readily follows from Proposition~\ref{PropMMBis} and Proposition~\ref{FacMM}.

%%%%%%%%%%%%%%%%%%%%
\subsection{Proof of Theorem~\ref{PosPropBis}}\label{PT2Sec}
%%%%%%%%%%%%%%%%%%%%
We now prove Theorem~\ref{PosPropBis} by establishing an explicit formula 
for the polynomial 
$$
\cX_{\frac{r}{s},\frac{r'}{s'}}=\Rc\Sc'-\Sc\Rc',
$$ 
up to a power of $q$.

Denote by $\mathbb{T}_{\frac{r}{s},\frac{r'}{s'}}$ the complete subgraph of the weighted 
Farey graph which is the union of ~$\mathbb{T}_{\frac{r}{s}}^{}$ and~$\mathbb{T}_{\frac{r'}{s'}}^{}$. 
The graph $\mathbb{T}_{\frac{r}{s},\frac{r'}{s'}}$ is a triangulated $N$-gon 
containing both fractions, $ \frac{r'}{s'}$  and $\frac{r}{s}$.
Figure \ref{doubletriang} below gives an example for $\frac{r}{s}=\frac75$, $\frac{r'}{s'}=\frac83$.

\begin{figure}[htbp]
\begin{center}
\psscalebox{1.0 1.0} % Change this value to rescale the drawing.
{\psset{unit=0.75cm}
\begin{pspicture}(0,-3.7125)(13.757692,3.7125)
\definecolor{colour0}{rgb}{0.8,0.6,1.0}
\definecolor{colour1}{rgb}{0.8,0.8,1.0}
\definecolor{colour2}{rgb}{1.0,0.8,1.0}
\psarc[linecolor=black, linewidth=0.04, fillstyle=solid,fillcolor=colour0, dimen=outer](6.878846,-3.1075){6.8}{0.0}{180.0}
\psarc[linecolor=black, linewidth=0.04, fillstyle=solid, dimen=outer](0.47884616,-3.1075){0.4}{0.0}{180.0}
\psarc[linecolor=black, linewidth=0.04, fillstyle=solid,fillcolor=colour0, dimen=outer](7.2788463,-3.1075){6.4}{0.0}{180.0}
\rput(0.07884616,-3.5075){$\frac01$}
\rput(0.87884617,-3.5075){$\frac11$}
\rput(2.478846,-3.5075){$\frac43$}
\rput(3.2788463,-3.5075){$\frac75$}
\rput(4.078846,-3.5075){$\frac32$}
\rput(7.2788463,-3.5075){$\frac21$}
\rput(13.678846,-3.5075){$\frac10$}
\psarc[linecolor=black, linewidth=0.04, fillstyle=solid,fillcolor=colour1, dimen=outer](4.078846,-3.1075){3.2}{0.0}{180.0}
\psarc[linecolor=black, linewidth=0.04, fillstyle=solid,fillcolor=colour1, dimen=outer](2.478846,-3.1075){1.6}{0.0}{180.0}
\psarc[linecolor=black, linewidth=0.04, fillstyle=solid, dimen=outer](1.6788461,-3.1075){0.8}{0.0}{180.0}
\psarc[linecolor=black, linewidth=0.04, fillstyle=solid,fillcolor=colour1, dimen=outer](3.2788463,-3.1075){0.8}{0.0}{180.0}
\psarc[linecolor=black, linewidth=0.04, fillstyle=solid,fillcolor=colour2, dimen=outer](10.478847,-3.1075){3.2}{0.0}{180.0}
\psarc[linecolor=black, linewidth=0.04, fillstyle=solid, dimen=outer](5.6788464,-3.1075){1.6}{0.0}{180.0}
\psarc[linecolor=black, linewidth=0.04, fillstyle=solid, dimen=outer](2.8788462,-3.1075){0.4}{0.0}{180.0}
\psarc[linecolor=black, linewidth=0.04, fillstyle=solid, dimen=outer](3.6788461,-3.1075){0.4}{0.0}{180.0}
\psdots[linecolor=black, dotsize=0.16](2.478846,-3.1075)
\psdots[linecolor=black, dotsize=0.16](4.078846,-3.1075)
\psdots[linecolor=black, dotsize=0.16](0.87884617,-3.1075)
\psdots[linecolor=black, dotsize=0.16](0.07884616,-3.1075)
\psdots[linecolor=black, dotsize=0.16003673](3.2788463,-3.1075)
\psline[linecolor=black, linewidth=0.04, linestyle=dashed, dash=0.17638889cm 0.10583334cm](3.2788463,-3.1075)(3.2788463,3.6925)
\psarc[linecolor=black, linewidth=0.04, fillstyle=solid,fillcolor=colour2, dimen=outer](8.878846,-3.1075){1.6}{0.0}{180.0}
\psarc[linecolor=black, linewidth=0.04, fillstyle=solid,fillcolor=colour2, dimen=outer](9.678846,-3.1075){0.8}{0.0}{180.0}
\psarc[linecolor=black, linewidth=0.04, fillstyle=solid, dimen=outer](12.078846,-3.1075){1.6}{0.0}{180.0}
\psarc[linecolor=black, linewidth=0.04, fillstyle=solid, dimen=outer](9.278846,-3.1075){0.4}{0.0}{180.0}
\psarc[linecolor=black, linewidth=0.04, fillstyle=solid, dimen=outer](10.078846,-3.1075){0.4}{0.0}{180.0}
\psarc[linecolor=black, linewidth=0.04, fillstyle=solid, dimen=outer](8.078846,-3.1075){0.8}{0.0}{180.0}
\psdots[linecolor=black, dotsize=0.16](8.878846,-3.1075)
\psdots[linecolor=black, dotsize=0.16](10.478847,-3.1075)
\psdots[linecolor=black, dotsize=0.16](9.678846,-3.1075)
\psdots[linecolor=black, dotsize=0.16](7.2788463,-3.1075)
\psdots[linecolor=black, dotsize=0.16](13.678846,-3.1075)
\rput(10.478847,-3.5075){$\frac31$}
\psline[linecolor=black, linewidth=0.04, linestyle=dashed, dash=0.17638889cm 0.10583334cm](9.678846,-3.1075)(9.678846,3.6925)
\rput(8.878846,-3.5075){$\frac52$}
\rput(9.678846,-3.5075){$\frac83$}
\end{pspicture}
}
\caption{The triangulation $\mathbb{T}_{\frac75, \frac83}$ is a decagon with the quiddity sequence 
$(c_{1}, \ldots, c_{10})=(3,3,1,2,4,3,1,2,4,1)$, the polynomial $\cX_{\frac75,\frac83}$ 
is given up to a power of~$q$ by the numerator of $\llbracket 2, 4,3 \rrbracket_q$. }
\label{doubletriang}
\end{center}
\end{figure}
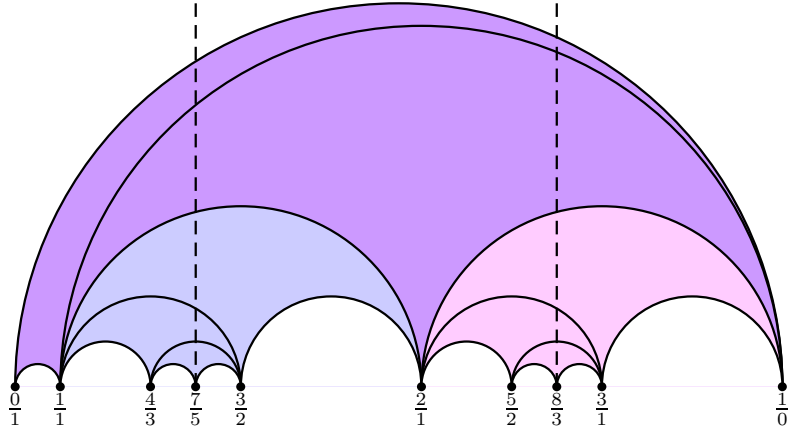

The vertices of the $N$-gon are numbered in decreasing order 
(the first vertex is $\frac10$ and $N^{\rm th}$ vertex is $\frac01$).

Let $k$ and $\ell$ be the numbers of the vertices corresponding to the fractions 
$ \frac{\Rc'}{\Sc'}$  and $\frac{\Rc}{\Sc}$, respectively. 
We assume that $\ell>k+1$ (if $\ell=k+1$ the fractions are linked in the Farey graph 
and this case is covered separetely with Corollary \ref{voisin} and  Proposition \ref{FormVoisin} below).

Let $(c_{1}, \ldots, c_{N})$ be the quiddity of  $\mathbb{T}_{\frac{r}{s},\frac{r'}{s'}}$ ,
 i.e., $c_{i}$ is the number of triangles incident with the $i$-th vertex. 
 By definition of  $\mathbb{T}_{\frac{r}{s},\frac{r'}{s'}}$,  one has 
 $$
 c_{k}=c_{\ell}=c_{N}=1,
 \qquad\hbox{and}\qquad
 c_{i}\geq 2
  \quad\hbox{for}\quad
  i\not=k,\ell,N.
  $$
 Denote by $\Rc_{k,\ell}$ the numerator of the $q$-rational defined as:
 $$
\frac{\Rc_{k,\ell}}{\Sc_{k,\ell}}:=\llbracket c_{k+1}, \ldots, c_{\ell-1}\rrbracket_q.
$$

The main ingredient of our proof is the following.

\begin{lem}
\label{NoLabPro}
There exists an integer $c\geq 0$ such that 
$$
\cX_{\frac{r}{s},\frac{r'}{s'}}=q^{c}\,\Rc_{k,\ell}.
$$
\end{lem}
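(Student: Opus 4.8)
The plan is to compute the product of $q$-deformed convergent matrices associated to the $N$-gon $\mathbb{T}_{\frac{r}{s},\frac{r'}{s'}}$, and to interpret the off-diagonal entries of this product as the quantities $\cX_{\frac{r}{s},\frac{r'}{s'}}$ and $\Rc_{k,\ell}$ respectively, up to an explicit power of $q$.

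First I would set up notation: write $M_q := M_q(c_1,\ldots,c_N) = R_q^{c_1}S_q\cdots R_q^{c_N}S_q$ for the full product, and split it at the three special vertices $k,\ell,N$ (where $c_i=1$). Concretely, $M_q = A \cdot (R_qS_q) \cdot B \cdot (R_qS_q) \cdot C \cdot (R_qS_q)$, where $A = R_q^{c_1}S_q\cdots R_q^{c_{k-1}}S_q$, $B = R_q^{c_{k+1}}S_q\cdots R_q^{c_{\ell-1}}S_q = M_q(c_{k+1},\ldots,c_{\ell-1})$, and $C = R_q^{c_{\ell+1}}S_q\cdots R_q^{c_{N-1}}S_q$. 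By Corollary~\ref{qCoCoOvThm}(i), $M_q = -q^{N-3}\Id$, which will be the key constraint relating these blocks. On the other hand, by Proposition~\ref{FacMM}(i), the first column of $A$ is $(\Rc',\Sc')^{\mathsf t}$ up to the conventions of the decreasing numbering (the subgraph $\mathbb{T}_{\frac{r}{s}}$ accounts for the portion from vertex $1$ to vertex $k$), and similarly the first column of the product up to vertex $\ell$ recovers $(\Rc,\Sc)^{\mathsf t}$. The block $B$ is exactly $M_q(c_{k+1},\ldots,c_{\ell-1})$ whose $(1,1)$-entry is $\Rc_{k,\ell}$ again by Proposition~\ref{FacMM}(i).

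The core computation is then the following: since $R_qS_q = \begin{pmatrix} q & -q^{-1} \\ 1 & 0\end{pmatrix}$ has determinant $1$ (it is a $q$-deformed generator), and since $\det M_q(c_1,\ldots,c_i) = q^{c_1+\cdots+c_i - i}$ as in the determinant computation of \S\ref{propRS}, one can solve the relation $M_q = -q^{N-3}\Id$ for the entries. Writing $A = \begin{pmatrix} \Rc' & \ast \\ \Sc' & \ast \end{pmatrix}$ up to scaling and $C$ similarly, and using that the equation $M_q = \pm q^{N-3}\Id$ forces $ABC\cdots$ to be scalar, one extracts that a particular $2\times 2$ minor built from $A$'s columns and $B$ equals (a power of $q$ times) $\Rc'\Sc - \Sc'\Rc = -\cX_{\frac{r}{s},\frac{r'}{s'}}$, while the same minor, viewed through the scalar constraint, equals (a power of $q$ times) the $(1,1)$-entry of $B$, namely $\Rc_{k,\ell}$. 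I would carry this out by multiplying $M_q = -q^{N-3}\Id$ on the left by $A^{-1}$ and on the right by $C^{-1}(R_qS_q)^{-1}$, obtaining $(R_qS_q)B(R_qS_q)C(R_qS_q) = -q^{N-3}A^{-1}$; then comparing the relevant entry using Cramer's rule for $A^{-1}$ (whose denominator is $\det A = q^{c_1+\cdots+c_{k-1}-(k-1)}$) produces the identity $\cX_{\frac{r}{s},\frac{r'}{s'}} = q^c\,\Rc_{k,\ell}$ with $c\geq 0$ an explicit exponent.

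The main obstacle I anticipate is bookkeeping the powers of $q$ and the sign, and in particular verifying that the resulting exponent $c$ is nonnegative — this requires carefully tracking the normalization powers ($q^{a_2+a_4+\cdots}$ from $\widetilde{M}^+_q$ versus the $q^{c_i-1}$'s in $M_q$) through the decreasing-numbering convention, and using the degree formulas from Corollary~\ref{DegRSProp} together with $\deg\Rc_{k,\ell}=c_{k+1}+\cdots+c_{\ell-1}-(\ell-1-k)$ to confirm the exponent lands in the correct range. An alternative, cleaner route I would consider as a fallback is a direct induction on $\ell-k$ using the weighted Farey sum rule (Theorem~\ref{WeightThm} / Definition~\ref{FSDef}): if $\frac{\Rc}{\Sc}$ is obtained from its two Farey neighbors $\frac{\Rc_1}{\Sc_1}\oplus_q\frac{\Rc_2}{\Sc_2}$ inside $\mathbb{T}_{\frac{r}{s},\frac{r'}{s'}}$, then $\cX_{\frac{r}{s},\frac{r'}{s'}}$ expands linearly in $\cX_{\frac{r_1}{s_1},\frac{r'}{s'}}$ and $\cX_{\frac{r_2}{s_2},\frac{r'}{s'}}$ with $q$-power coefficients, and the recursion on $\Rc_{k,\ell}$ coming from \eqref{LinRecEq} matches it termwise. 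Once Lemma~\ref{NoLabPro} is established, Theorem~\ref{PosPropBis} follows immediately: $\Rc_{k,\ell}$ has positive integer coefficients by Proposition~\ref{propRS}(iii) (part (iii) of the unnumbered proposition in \S\ref{propRS}), being the numerator of a genuine $q$-rational, hence so does $q^c\,\Rc_{k,\ell} = \cX_{\frac{r}{s},\frac{r'}{s'}}$.
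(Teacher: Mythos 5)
Your core idea --- split the convergent matrix of the $N$-gon at the special vertices and read off $\cX_{\frac{r}{s},\frac{r'}{s'}}$ as a power of $q$ times the $(1,1)$-entry of the middle block $B=M_q(c_{k+1},\ldots,c_{\ell-1})$ --- is exactly what the paper does. But your main route has a gap in the step where the identity is supposed to emerge. The paper never uses $M_q(c_1,\ldots,c_N)=-q^{N-3}\,\Id$; it works only with the product up to vertex $\ell-1$, writes $\bigl(\begin{smallmatrix}\Rc'&*\\ \Sc'&*\end{smallmatrix}\bigr)=q^{-c'}A\,(R_qS_q)\,B$ via the surgery relation \eqref{qFirstSurM} (which absorbs the entry $c_k=1$ at the cost of an explicit $q$-power), and then multiplies \emph{this} matrix on the left by $\bigl(A(R_qS_q)\bigr)^{-1}$, whose first row is $q^{-c''}(-\Sc,\;\Rc)$; the $(1,1)$-entry of the product is then literally $q^{-c''}(\Rc\Sc'-\Sc\Rc')$ on one side and $q^{-c'}\Rc_{k,\ell}$ on the other. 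In your version, multiplying $M_q=-q^{N-3}\Id$ on the left by $A^{-1}$ gives $(R_qS_q)B(R_qS_q)C(R_qS_q)=-q^{N-3}A^{-1}$, and the entries of $-q^{N-3}A^{-1}$ involve only the columns of $A$ (i.e.\ only $\Rc',\Sc'$ and the second column of $A$) --- the minor $\Rc\Sc'-\Sc\Rc'$, which pairs the first column of $A$ against the first column of $A(R_qS_q)B$, never appears in that equation. You would still have to perform the multiplication of $A^{-1}$ (or $(A(R_qS_q))^{-1}$) against the matrix carrying the \emph{other} fraction's column, at which point the Conway--Coxeter relation and the block $C$ are irrelevant. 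Two smaller points: your displayed $R_qS_q$ is wrong --- since $[1]_q=1$ and $q^{1-1}=1$ one has $R_qS_q=\bigl(\begin{smallmatrix}1&-1\\ 1&0\end{smallmatrix}\bigr)=M_q(1)$, not $\bigl(\begin{smallmatrix}q&-q^{-1}\\ 1&0\end{smallmatrix}\bigr)$ (your determinant claim only holds for the correct matrix); and the identification of the first column of $A(R_qS_q)B$ with $(\Rc,\Sc)$ up to a $q$-power is precisely where Lemma~\ref{qLSLem} must be invoked, since $(c_1,\ldots,c_{\ell-1})$ contains the entry $1$ and is not a genuine negative expansion --- you gesture at this (``up to the conventions'') but it is the actual mechanism, not a bookkeeping afterthought. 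Your fallback (induction on $\ell-k$ via the weighted Farey sum) is a legitimately different and workable route, and your final observation that $c\geq 0$ is automatic (compare constant terms: $\Rc_{k,\ell}(0)=1$ while $\cX\in\Z[q]$) is cleaner than worrying about degree formulas.
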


\begin{proof}
By construction of $\mathbb{T}_{\frac{r}{s},\frac{r'}{s'}}$, one has 
$ \frac{\Rc}{\Sc}=\llbracket c_{1}, \ldots, c_{k-1}\rrbracket_q$ and 
$ \frac{\Rc'}{\Sc'}=\llbracket c'_{1}, \ldots, c'_{k'}\rrbracket_q$, where  
$(c'_{1}, \ldots, c'_{k'})$ is a sequence that can be obtained from $(c_{1}, \ldots, c_{\ell-1})$ 
by applying a sequence of local surgery operations~\eqref{qFirstSurM}. 
This corresponds to removing one after the other exterior triangles in~$\mathbb{T}_{\frac{r}{s},\frac{r'}{s'}}$ 
in order to obtain the triangulation $\mathbb{T}_{\frac{r'}{s'}}^{}$. 

We now use $2\times 2$-matrix computations. 
(When an entry in the matrix is not relevant for the computation we replace it by a ``star'' symbol.)
By Proposition \ref{FacMM} and formula \eqref{qFirstSurM}, one has 
$$
\begin{pmatrix}
\Rc&*\\[6pt]
\Sc&*
\end{pmatrix}=M_{q}(c_{1},\ldots, c_{k-1})
$$
and
$$
\begin{pmatrix}
\Rc'&*\\[6pt]
\Sc'&*
\end{pmatrix}=M_{q}(c'_{1},\ldots, c'_{k'})=q^{-c'}M_{q}(c_{1},\ldots, c_{k-1},1,c_{k+1},\ldots, c_{\ell-1}),
$$
where $c'$ is the number of local transformations applied on the sequence $(c_{1},\ldots, c_{\ell-1})$ (i.e. the number of triangles removed from $\mathbb{T}_{\frac{r}{s},\frac{r'}{s'}}$ to obtain $\mathbb{T}_{\frac{r'}{s'}}^{}$).
Thus, one has
$$
\begin{pmatrix}
\Rc'&*\\[6pt]
\Sc'&*
\end{pmatrix}=
q^{-c'}\begin{pmatrix}
\Rc&*\\[6pt]
\Sc&*
\end{pmatrix}
\begin{pmatrix}
1&-1\\[6pt]
1&0
\end{pmatrix}
M_{q}(c_{k+1},\ldots, c_{\ell-1}).
$$
Now, in the above matrix equation multiplying on the left by 
$$\left(\begin{pmatrix}
\Rc&*\\[6pt]
\Sc&*
\end{pmatrix}
\begin{pmatrix}
1&-1\\[6pt]
1&0
\end{pmatrix}\right)^{-1}
=\begin{pmatrix}
*&-\Rc\\[6pt]
*&-\Sc
\end{pmatrix}
^{-1}
=q^{-c''}
\begin{pmatrix}
-\Sc&\Rc\\[6pt]
*&*
\end{pmatrix},
$$ 
where $q^{c''}$ is the determinant of $M_{q}(c_{1},\ldots, c_{k-1})$, gives
$$
\begin{pmatrix}
\cX_{\frac{r}{s},\frac{r'}{s'}}&*\\[6pt]
*&*
\end{pmatrix}=q^{c}
M_{q}(c_{k+1},\ldots, c_{\ell-1}).
$$
Hence the result, according to Proposition \ref{FacMM}.
\end{proof}

By Proposition~\ref{PosProp}, $\Rc_{k,\ell}$ is a polynomial with positive coefficients. 
Lemma~\ref{NoLabPro} then implies that 
$\cX_{\frac{r}{s},\frac{r'}{s'}}$ is a polynomial with positive integer coefficients.

Theorem \ref{PosPropBis} is proved.

%%%%%%%%%%%%%%%%%%%%
\subsection{The weight of two neighbours}\label{WVSec}
%%%%%%%%%%%%%%%%%%%%

The exact value for the power of $q$ in the case where the fractions are connected by an edge in the Farey graph 
(i.e., the value of $\alpha$ in Corollary \ref{voisin}) 
can be computed explicitly using the coefficients of the expansions of the continued fractions and is given in the following proposition.

\begin{prop}\label{FormVoisin}
Let $\frac{\Rc}{\Sc}$ 
and $ \frac{\Rc'}{\Sc'}$ 
be two $q$-rationals, corresponding to the rationals 
$\frac{r}{s}=\llbracket c_{1}, \ldots, c_{k-1}\rrbracket$ 
and $\frac{r'}{s'}=\llbracket c'_{1}, \ldots, c'_{k'}\rrbracket$, respectively.
If $\frac{r}{s}$ and $\frac{r'}{s'}$ are linked in the Farey graph, and $\frac{r}{s}<\frac{r'}{s'}$,
then
$$\Rc\Sc'-\Sc\Rc' =
\left\{
\begin{array}{lcl}
  q^{c_{1}+\ldots+c_{k}-k},& \text{if}  & r>r',  \\[6pt]
 q^{c'_{1}+\ldots+c_{k'-1}-k'+1},& \text{if}  & r'>r.
 \end{array}\right.
$$
\end{prop}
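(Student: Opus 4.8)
\smallskip
\noindent\emph{Sketch of proof.}
The plan is to read $\Rc\Sc'-\Sc\Rc'$ off the $q$-deformed matrices of convergents. We may assume $\frac rs>\frac{r'}{s'}$, so that $rs'-r's=1$ and $\Rc\Sc'-\Sc\Rc'$ is the non-negative power of $q$ produced by Corollary~\ref{voisin} (the opposite ordering only changes a sign). Apart from Proposition~\ref{FacMM}(i), the only ingredient will be the elementary identity
\[
\det M_q(d_1,\ldots,d_n)=\prod_{i=1}^{n}\det\begin{pmatrix}[d_i]_q&-q^{d_i-1}\\[2pt]1&0\end{pmatrix}=q^{\,d_1+\cdots+d_n-n},
\]
immediate from~\eqref{qNegMat}. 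Everything reduces to the single statement $\Rc\Sc'-\Sc\Rc'=q^{\deg\Rc}$: indeed, by Corollary~\ref{DegRSProp} one has $\deg\Rc=c_1+\cdots+c_{k-1}-(k-1)$, which is the claimed exponent when $r>r'$ (with the convention $c_k=1$), while when $r'>r$ the fraction $\frac rs$ will turn out to be the penultimate convergent of $\frac{r'}{s'}=\llbracket c'_1,\ldots,c'_{k'}\rrbracket$, so that $\deg\Rc=c'_1+\cdots+c'_{k'-1}-(k'-1)$ is the other claimed exponent.

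First I would pin down how the two neighbours are related as continued fractions. Since $\gcd(r,s)=1$, the fractions $\frac xy<\frac rs$ with $ry-sx=1$ are exactly the iterated mediants $\frac{x_0+tr}{y_0+ts}$, $t\ge0$, whose $t=0$ term $\frac{x_0}{y_0}$ (the one of least denominator) is the lower Farey parent of $\frac rs$; dually, the fractions $\frac xy>\frac{r'}{s'}$ with $s'x-r'y=1$ are the iterated mediants of $\frac{r'}{s'}$ with its upper Farey parent. Comparing numerators in these two families forces: if $r>r'$ then $\frac{r'}{s'}$ is the lower parent of $\frac rs$, hence $\frac{r'}{s'}=\llbracket c_1,\ldots,c_{k-2},c_{k-1}-1\rrbracket$ (reduced to an all-$\ge2$ expansion via~\eqref{qFirstSurM} when $c_{k-1}=2$); and if $r'>r$ then $\frac rs$ is the upper parent of $\frac{r'}{s'}$, i.e.\ $\frac rs=\llbracket c'_1,\ldots,c'_{k'-1}\rrbracket$. (That consecutive convergents are Farey neighbours, used here, is the $2\times2$ determinant computation in~\S\ref{propRS}.)

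The computation is then short. If $r'>r$, Proposition~\ref{FacMM}(i) gives
\[
M_q(c'_1,\ldots,c'_{k'})=\begin{pmatrix}\Rc'&-q^{c'_{k'}-1}\Rc\\[2pt]\Sc'&-q^{c'_{k'}-1}\Sc\end{pmatrix},
\]
so equating determinants yields $q^{c'_{k'}-1}(\Rc\Sc'-\Sc\Rc')=q^{c'_1+\cdots+c'_{k'}-k'}$, hence $\Rc\Sc'-\Sc\Rc'=q^{c'_1+\cdots+c'_{k'-1}-(k'-1)}$. If $r>r'$, put $\frac{\Rc_i}{\Sc_i}=\llbracket c_1,\ldots,c_i\rrbracket_q$; then $(\Rc,\Sc)=(\Rc_{k-1},\Sc_{k-1})$ and, by~\eqref{LinRecEq}, $\Rc'=\Rc-q^{c_{k-1}-1}\Rc_{k-2}$ and $\Sc'=\Sc-q^{c_{k-1}-1}\Sc_{k-2}$, whence
\[
\Rc\Sc'-\Sc\Rc'=q^{c_{k-1}-1}\bigl(\Sc\,\Rc_{k-2}-\Rc\,\Sc_{k-2}\bigr)=q^{c_{k-1}-1}\cdot q^{\,c_1+\cdots+c_{k-2}-(k-2)}=q^{\,c_1+\cdots+c_{k-1}-(k-1)},
\]
the middle equality being the determinant recursion of~\S\ref{propRS} at $i=k-2$. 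Since all the $c_i$ and $c'_i$ are $\ge2$, these exponents are $\ge0$, which also recovers the non-negativity in Corollary~\ref{voisin}; alternatively $c$ can be extracted from Lemma~\ref{NoLabPro} by specializing to $\ell=k+1$, where the intermediate continued fraction is empty. The step I expect to require genuine care is the second paragraph — deducing from $r\lessgtr r'$ alone which fraction is the Farey parent, together with the $c_{k-1}=2$ renormalization; the matrix algebra around it is routine.
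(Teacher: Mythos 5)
Your proof is correct and follows essentially the same route as the paper's: both cases reduce to Proposition~\ref{FacMM}(i) together with the identity $\det M_q(d_1,\ldots,d_n)=q^{d_1+\cdots+d_n-n}$, after identifying the neighbour as either the penultimate convergent (when $r'>r$) or the fraction obtained by decreasing the last coefficient by one (when $r>r'$). The only presentational difference is that the paper reads this identification off the local picture in the weighted Farey graph instead of your mediant arithmetic, and the paper's proof, like yours, implicitly works with the ordering $\frac{r}{s}>\frac{r'}{s'}$ that makes the stated formula sign-correct; the determinant computations are identical.
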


\begin{proof}
Let the fractions~$\frac{r}{s}$ and~$\frac{r'}{s'}$ be linked in the Farey graph.
  In the case where $r>r'$, the edge between the two fractions forms a left side of a triangle, 
 and the two fractions belong to $\mathbb{T}_{r/s}^{}$. In the weighted Farey graph $\mathbb{T}_{r/s}^{q}$ 
 the local picture is:
\begin{center}
\psscalebox{1.0 1.0} % Change this value to rescale the drawing.
{
\begin{pspicture}(-1,-1.315)(3.385,1.315)
\definecolor{colour0}{rgb}{1.0,0.0,0.2}
\psarc[linecolor=black, linewidth=0.02, dimen=outer](0.87,-0.685){0.8}{0.0}{180.0}
\psarc[linecolor=black, linewidth=0.02, dimen=outer](2.47,-0.685){0.8}{0.0}{180.0}
\psarc[linecolor=black, linewidth=0.02, dimen=outer](1.67,-0.685){1.6}{0.0}{180.0}
\rput[tl](0.87,0.515){\textcolor{colour0}{1}}
\rput[tl](2,0.515){\textcolor{colour0}{$q^{c_{k}-1}$}}
\rput[tl](1.67,1.315){\textcolor{colour0}{$q^{c_{k}-2}$}}
\rput(0.07,-1.085){$\frac{\Rc'_{}}{\Sc'_{}}$}
\rput(1.67,-1.085){$\frac{\Rc_{}}{\Sc_{}}$}
\rput(3.27,-1.085){$\frac{\Rc''_{}}{\Sc''_{}}$}
\end{pspicture}\hspace{1cm}
}
\end{center}
One has by Proposition \ref{FacMM}
$$
M_{q}(c_{1},\ldots, c_{k})=
\begin{pmatrix}
\Rc&-q^{c_{k}-1}\Rc''_{}\\[6pt]
\Sc&-q^{c_{k}-1}\Sc''
\end{pmatrix}.
$$
Taking the determinants of the above matrices, one obtains
$$
\det M_{q}(c_{1},\ldots, c_{k})=
q^{c_k-1}\left(\Rc''\Sc-\Rc\Sc''\right)
= q^{c_{1}+\cdots+c_{k}-k}.
$$
On the other hand, according to the weighted Farey sum~\eqref{WFSEq}, one has
$$
\Rc'_{}=\Rc_{}-q^{c_{k}-1}\Rc''_{}\quad \text{and } \quad 
\Sc'_{}=\Sc_{}-q^{c_{k}-1}\Sc''_{}.
$$
So that we deduce
$$
\Rc\Sc'-\Rc'\Sc= q^{c_{1}+\cdots+c_{k}-k}.
$$

 In the case where $r'>r$, the edge between the two fractions forms 
 a right side of a triangle, and the two fractions belong to $\mathbb{T}_{\frac{r'}{s'}}^{q}$. 
 The local picture is:
\begin{center}
\psscalebox{1.0 1.0}
{
\begin{pspicture}(0,-1.315)(3.385,1.315)
\definecolor{colour0}{rgb}{1.0,0.0,0.2}
\psarc[linecolor=black, linewidth=0.02, dimen=outer](0.87,-0.685){0.8}{0.0}{180.0}
\psarc[linecolor=black, linewidth=0.02, dimen=outer](2.47,-0.685){0.8}{0.0}{180.0}
\psarc[linecolor=black, linewidth=0.02, dimen=outer](1.67,-0.685){1.6}{0.0}{180.0}
\rput[tl](0.87,0.515){\textcolor{colour0}{1}}
\rput[tl](2,0.515){\textcolor{colour0}{$q^{c'_{k'}-1}$}}
\rput[tl](1.67,1.315){\textcolor{colour0}{$q^{c'_{k'}-2}$}}
\rput(0.07,-1.085){$\frac{\Rc''}{\Sc''}$}
\rput(1.67,-1.085){$\frac{\Rc_{}'}{\Sc'_{}}$}
\rput(3.27,-1.085){$\frac{\Rc_{}}{\Sc_{}}$}
\end{pspicture}
}
\end{center}
Computing the determinant of the matrix 
$$M_{q}(c'_{1},\ldots, c'_{k'})=
\begin{pmatrix}
\Rc'&-q^{c'_{k'}-1}\Rc_{}\\[6pt]
\Sc'&-q^{c'_{k'}-1}\Sc
\end{pmatrix},
$$
one gets $\Rc\Sc'-\Rc'\Sc= q^{c'_{1}+\cdots+c'_{k'-1}-(k'-1)}$.

Hence the result.
\end{proof}

%%%%%%%%%%%%%%%%%%%%
\subsection{Proof of Proposition~\ref{q-1Prop}}\label{Proof-1Sec}
%%%%%%%%%%%%%%%%%%%%

In the special case where~$q=-1$, the matrices~$R_q,L_q$ and~$S_q$ are as follows
$$
R_{-1}=
\begin{pmatrix}
-1&1\\[4pt]
0&1
\end{pmatrix},
\qquad
L_{-1}=
\begin{pmatrix}
1&0\\[4pt]
1&-1
\end{pmatrix}.
\qquad
S_{-1}:=
\begin{pmatrix}
0&1\\[4pt]
1&0
\end{pmatrix}.
$$
Therefore, $R^2_{-1}=L^2_{-1}=S^2_{-1}=\Id$ and $(R_{-1}S_{-1})^3=\Id$.
Formula~\eqref{McGen} then implies that the matrix
$M_{-1}(c_1,\ldots,c_k)$ is equal to either  one of the matrices
$$
\id,
\qquad
R_{-1},
\qquad
S_{-1},
\qquad
R_{-1}S_{-1},
\qquad
(R_{-1}S_{-1})^2.
$$
Each of these matrices has coefficients~$-1,0$, or~$1$. 
By Proposition~\ref{FacMM}, the numerator~$\Rc(-1)$ of a $q$-rational
equals the upper left coefficient of $M_{-1}(c_1,\ldots,c_k)$.
Hence Proposition~\ref{q-1Prop}, Part~(i).

To prove Part~(ii) we proceed by induction on the depth of the Farey graph.
Every $q$-rational~$\frac{\Rc}{\Sc}=\left[\frac{r}{s}\right]_q$ is obtained as the weighted Farey sum:
$$
\frac{\Rc}{\Sc}=\frac{\Rc'}{\Sc'}\oplus_q\frac{\Rc''}{\Sc''}.
$$
In particular, $\Rc=\Rc'+q^\ell\Rc''$. Let $r',r''$ be the nmerators of the corresponding rationals.
By induction assumption, we have:

(a) $\Rc'(-1)=\pm1$ and $\Rc''(-1)=\pm1$  if and only if $r'$ and $r''$ are odd.

(b) $\Rc'(-1)=0$ and $\Rc''(-1)=0$ if and only if $r'$ and $r''$ are even.
The integer $r$ is even if and only if $r',r''$ are either both odd, or both even.
In view of Part (i), this implies that $\Rc(-1)=0$  if and only if $r$ is even. 
And similarly for $\Sc$.

Proposition~\ref{q-1Prop} is proved.

%%%%%%%%%%%%%%%%%%%%
%%%%%%%%%%%%%%%%%%%%
\section{$q$-analogues of the continuants}\label{DetSec}
%%%%%%%%%%%%%%%%%%%%
%%%%%%%%%%%%%%%%%%%%

Continued fractions are related to the polynomials called the continuants.
These polynomials express the numerator and denominator of a continued fraction
$$
\frac{r}{s}=[a_1,\ldots,a_{2m}]=
\llbracket{}c_1,\ldots,c_k\rrbracket
$$
in terms of the coefficients~$a_i$ and~$c_i$.
They are calculated as determinants of certain tridiagonal matrices.
In this section we describe the $q$-continuants that provide similar formulas for calculating
the numerator and denominator of a $q$-continued fraction.
We prove a $q$-analogue of the Euler identity for the continuants.

%%%%%%%%%%%%%%%%%%%%
\subsection{Euler's continuants}
%%%%%%%%%%%%%%%%%%%%

Consider the tridiagonal determinants
$$
K_{k}(c_1,\ldots,c_k):=
\det\left(
\begin{array}{cccccc}
c_1&1&&&\\[4pt]
1&c_{2}&1&&\\[4pt]
&\ddots&\ddots&\!\!\ddots&\\[4pt]
&&1&c_{k-1}&\!\!\!\!\!1\\[4pt]
&&&\!\!\!\!\!1&\!\!\!\!c_{k}
\end{array}
\right)
$$
where the empty space of the matrix is filled by zero entries,
and
$$
K^+_{2m}(a_1,\ldots,a_{2m}):=
\det\left(
\begin{array}{cccccc}
a_1&1&&&\\[4pt]
-1&a_{2}&1&&\\[4pt]
&\ddots&\ddots&\!\!\ddots&\\[4pt]
&&-1&a_{2m-1}&\!\!\!\!\!1\\[6pt]
&&&\!\!\!\!\!-1&\!\!\!\!a_{2m}
\end{array}
\right).
$$
One then has:
$$
\left\{
\begin{array}{rcccl}
r&=&K^+_{2m}(a_1,\ldots,a_{2m})&=&K_{k}(c_1,\ldots,c_k),\\[4pt]
s&=&K^{+}_{2m-1}(a_{2}, \ldots,a_{2m})&=&K_{k-1}(c_2,\ldots,c_k).
\end{array}
\right.
$$

The polynomials $K_{k}(c_1,\ldots,c_k)$ and $K^+_{2m}(a_1,\ldots,a_{2m})$ are usually
called the {\it continuants}; see~\cite{BR,Concr}.
They were studied by Euler who proved a series of identities involving them.
The identities found by Euler are as follows
$$
\begin{array}{rcl}
K_{k-i-1}(c_{i+1},\ldots,c_{k-1})\,K_{\ell-j-1}(c_{j+1},\ldots,c_{\ell-1})
&=&K_{j-i-1}(c_{i+1},\ldots,c_{j-1})\,K_{\ell-k-1}(c_{k+1},\ldots,c_{\ell-1})\\[4PT]
&+&K_{i-\ell-1}(c_{i+1},\ldots,c_{\ell-1})\,K_{k-j-1}(c_{j+1},\ldots,c_{k-1}).
\end{array}
$$
provided~$1\leq{}i<j<k<\ell\leq{}n$.

\begin{rem}
The above identity can also be called the ``Euler-Ptolemy-Pl\"ucker relation''
since it is a particular case of a general class of relations
unified by the Pl\"ucker relations for the coordinates of the Grassmannian~$Gr_{2,N}$
(for more details; see~\cite{FB1} and references therein).
\end{rem}

%%%%%%%%%%%%%%%%%%%%
\subsection{Introducing $q$-deformed continuants}\label{DefDetSec}
%%%%%%%%%%%%%%%%%%%%

We call $q$-{\it continuants} the following 
determinants
\begin{equation}
\label{KEq}
K_{k}(c_1,\ldots,c_k)_{q}:=
\left|
\begin{array}{cccccccc}
[c_1]_{q}&q^{c_{1}-1}&&&\\[6pt]
1&[c_{2}]_{q}&q^{c_{2}-1}&&\\[4pt]
&\ddots&\ddots&\!\!\ddots&\\[4pt]
&&1&\!\!\![c_{k-1}]_{q}&q^{c_{k-1}-1}\\[6pt]
&&&\!\!\!\!\!\!1&\!\!\!\!\!\!\!\![c_{k}]_{q}
\end{array}
\right|,
\end{equation}
and 
\begin{equation}
\label{K+Eq}
K^{+}_{2m}(a_1,\ldots,a_{2m})_{q}:=
\left|
\begin{array}{ccccccccc}
[a_1]_{q}&q^{a_{1}}&&&\\[6pt]
-1&[a_{2}]_{q^{-1}}&q^{-a_{2}}&&\\[4pt]
&-1&[a_3]_{q}&q^{a_{3}}&&\\[4pt]
&&-1&[a_{4}]_{q^{-1}}&q^{-a_{4}}&&\\[4pt]
&&&\ddots&\ddots&\!\!\ddots&\\[4pt]
&&&&-1&[a_{2m-1}]_{q}&\!\!\!\!\!q^{a_{2m-1}}\\[6pt]
&&&&&-1&\!\!\!\![a_{2m}]_{q^{-1}}
\end{array}
\right|,
\end{equation}
where $c_{i}$ and $a_i$ are positive integers.
We will need to consider also the continuant $K^{+}$ over an odd number of variables. We denote

$$
K^{+}_{2m-1}(a_2,\ldots,a_{2m})_{q}:=
\left|
\begin{array}{ccccccccc}
[a_{2}]_{q^{-1}}&q^{-a_{2}}&&\\[4pt]
-1&[a_3]_{q}&q^{a_{3}}&&\\[4pt]
&-1&[a_{4}]_{q^{-1}}&q^{-a_{4}}&&\\[4pt]
&&\ddots&\ddots&\!\!\ddots&\\[4pt]
&&&-1&[a_{2m-1}]_{q}&\!\!\!\!\!q^{a_{2m-1}}\\[6pt]
&&&&-1&\!\!\!\![a_{2m}]_{q^{-1}}
\end{array}
\right|.
$$

One easily gets by induction the following relations between the $q$-continuants and the matrices of convergents $M_{q}$ and $M^{+}_{q}$.
\begin{prop}
\label{MqCont}(i)
Let $(c_{1}, \ldots, c_{k})$ be positive integers. One has
$$
M_{q}(c_{1}, \ldots, c_{k})=
\left(
\begin{array}{ccc}
K_{k}(c_1,\ldots,c_k)_{q}  &  -q^{c_{k}-1}K_{k-1}(c_1,\ldots,c_{k-1})_{q} \\[8pt]
K_{k-1}(c_2,\ldots,c_k)_{q}  &  -q^{c_{k}-1}K_{k-2}(c_2,\ldots,c_{k-1})_{q} 
\end{array}
\right).
$$
(ii) 
Let $(a_{1}, \ldots, a_{2m})$ be positive integers. One has
$$
M^{+}_{q}(a_{1}, \ldots, a_{2m})=
\left(
\begin{array}{ccc}
K^{+}_{2m}(a_1,\ldots,a_{2m})_{q} &  q^{a_{2m}}K^{+}_{2m-1}(a_1,\ldots,a_{2m-1})_{q^{-1}} \\[8pt]
K^{+}_{2m-1}(a_2,\ldots,a_{2m})_{q} & q^{a_{2m}}K^{+}_{2m-2}(a_2,\ldots,a_{2m-1})_{q^{-1}}\end{array}
\right).
$$
\end{prop}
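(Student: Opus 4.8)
The plan is to establish both identities in parallel by induction on the number of elementary factors in the matrix of convergents, recognising the inductive step as an instance of the three-term recurrence satisfied by the $q$-continuants.

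First I would record the recurrences for the $q$-continuants. Expanding the tridiagonal determinant in \eqref{KEq} along its last column (Laplace expansion) gives
$$
K_{k}(c_1,\ldots,c_k)_{q}=[c_k]_{q}\,K_{k-1}(c_1,\ldots,c_{k-1})_{q}-q^{c_{k-1}-1}\,K_{k-2}(c_1,\ldots,c_{k-2})_{q},
$$
which has the same shape as \eqref{LinRecEq}; the same expansion applied to the truncated determinants gives the identical recurrence for $K_{k-1}(c_2,\ldots,c_k)_q$ and $K_{k-2}(c_2,\ldots,c_{k-1})_q$, since the recurrence in the last index does not see where the sequence begins. An entirely analogous expansion of $K^{+}_{j}(a_1,\ldots,a_j)_{q}$ in \eqref{K+Eq} produces a recurrence of the same form, with two features to track: the subdiagonal entries $-1$ turn the minus sign into a plus sign, and the diagonal and superdiagonal entries are $q$- or $q^{-1}$-weighted according to the parity of the index. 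I also fix the boundary conventions that an empty continuant equals $1$ and a continuant with a negative index equals $0$, which is what makes the stated matrix identities hold at the base of the induction.

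For part (i) I would induct on $k$. The base case $k=1$ is the one-factor equality $M_q(c_1)=\bigl(\begin{smallmatrix}[c_1]_q&-q^{c_1-1}\\ 1&0\end{smallmatrix}\bigr)$, which agrees with the right-hand side once $K_0=1$ and $K_{-1}=0$ are used. For the inductive step I write $M_q(c_1,\ldots,c_{k+1})=M_q(c_1,\ldots,c_k)\,\bigl(\begin{smallmatrix}[c_{k+1}]_q&-q^{c_{k+1}-1}\\ 1&0\end{smallmatrix}\bigr)$, insert the induction hypothesis for $M_q(c_1,\ldots,c_k)$, and carry out the $2\times2$ multiplication: the left column of the product is furnished by the recurrence above (it becomes $K_{k+1}(c_1,\ldots,c_{k+1})_q$ on top and $K_k(c_2,\ldots,c_{k+1})_q$ below), while the right column is simply $-q^{c_{k+1}-1}$ times the left column of $M_q(c_1,\ldots,c_k)$ read off from the hypothesis, which is exactly the claimed form. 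Part (ii) is handled the same way, except that it is cleaner to prove the statement first for a product of an arbitrary number $j$ of convergent factors --- so that the even-length matrices $M^{+}_q$ and the odd-length truncations occurring in their right columns are covered simultaneously --- and then to set $j=2m$. Peeling off the last factor on the right again expresses the new left column through the three-term recurrence and the new right column as a power of $q$ times the old left column; matching this against the claimed entries completes the induction.

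The only genuine difficulty is in part (ii) and is bookkeeping rather than conceptual: one must propagate the alternation between the $q$- and $q^{-1}$-weighted factors correctly through both the Laplace expansion of $K^{+}$ and the iterated matrix products, and in particular must pin down the exact power of $q$ (and whether it carries $q$ or $q^{-1}$) prefixing the second column of $M^{+}_q$. Here the identity $[a]_{q^{-1}}=q^{1-a}[a]_q$ together with a consistent choice of conventions for the odd-length continuants $K^{+}$ are what do the work; once these are in place, both parts reduce to the mechanical $2\times2$ computations indicated above, in agreement with Proposition~\ref{FacMM}.
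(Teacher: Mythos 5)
Your proposal is correct and is essentially the paper's own argument: the paper offers no proof beyond the remark that these identities are easily obtained by induction, and your induction on the number of elementary factors --- left column governed by the Laplace-expansion three-term recurrence of the $q$-continuants, right column equal to a $q$-power multiple of the preceding left column --- is precisely that induction. The bookkeeping issue you flag in part (ii) is real (the odd-length continuant $K^{+}_{2m-1}(a_1,\ldots,a_{2m-1})_{q^{-1}}$ is not separately defined in the paper, and one must fix its convention, together with the identity $[a]_{q^{-1}}=q^{1-a}[a]_q$, to reconcile the computed prefactor of the second column with the stated $q^{a_{2m}}$), but you correctly identify both the difficulty and the tools that resolve it.
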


Combining the above proposition with Proposition \ref{PropMMBis}, one gets the following formulas for the numerators and denominators of $q$-rationals.

\begin{prop}\label{fracont}
If
$
\left[\frac{r}{s}\right]_{q}=[a_1,\ldots,a_{2m}]_{q}=
\llbracket{}c_1,\ldots,c_k\rrbracket_{q}=\frac{\Rc(q)}{\Sc(q)}$, then
$$
\begin{array}{rcccl}
\Rc(q)&=&K_{k}(c_1,\ldots,c_k)_{q}&=&q^{a_{2}+a_{4}+\ldots+a_{2m}-1}K^{+}_{2m}(a_{1}, \ldots,a_{2m})_{q},\\[4pt]
\Sc(q)&=&K_{k-1}(c_2,\ldots,c_k)_{q}&=&q^{a_{2}+a_{4}+\ldots+a_{2m}-1}K^{+}_{2m-1}(a_{2}, \ldots,a_{2m})_{q}.
\end{array}
$$
(Note that $k=a_{2}+a_{4}+\ldots+a_{2m}$.)
\end{prop}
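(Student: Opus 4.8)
The plan is to deduce both identities by comparing the first columns of the matrices of convergents expressed in two ways: on one hand through the numerators/denominators of the $q$-continued fractions (Proposition~\ref{FacMM}), on the other hand through the $q$-continuants (Proposition~\ref{MqCont}). First I would handle the negative continued fraction. By Proposition~\ref{FacMM}(i) the first column of $M_q(c_1,\ldots,c_k)$ is $\left(\begin{smallmatrix}\Rc\\ \Sc\end{smallmatrix}\right)$ with $\frac{\Rc}{\Sc}=\llbracket c_1,\ldots,c_k\rrbracket_q$; by the Proposition proved in \S\ref{propRS} the pair $(\Rc,\Sc)$ coming from the convergent recursion~\eqref{LinRecEq} is already coprime with $\Rc(1)=r$, $\Sc(1)=s$, hence is exactly the reduced pair of~\eqref{RcSc}. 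On the other hand Proposition~\ref{MqCont}(i) identifies that same first column with $\left(\begin{smallmatrix}K_k(c_1,\ldots,c_k)_q\\ K_{k-1}(c_2,\ldots,c_k)_q\end{smallmatrix}\right)$. Matching entries gives $\Rc=K_k(c_1,\ldots,c_k)_q$ and $\Sc=K_{k-1}(c_2,\ldots,c_k)_q$, which is the first stated equality.

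Next I would pass to the regular expansion using the factorisation $\widetilde M^+_q(a_1,\ldots,a_{2m})=M_q(c_1,\ldots,c_k)\,R_q$ of Proposition~\ref{PropMMBis}. Since $R_q=\left(\begin{smallmatrix}q&1\\ 0&1\end{smallmatrix}\right)$, right multiplication by $R_q$ simply multiplies the first column by $q$, so by the previous step the first column of $\widetilde M^+_q(a_1,\ldots,a_{2m})$ equals $\left(\begin{smallmatrix}q\Rc\\ q\Sc\end{smallmatrix}\right)$ (in agreement with Proposition~\ref{FacMM}(ii)). On the other hand, by definition $\widetilde M^+_q(a_1,\ldots,a_{2m})=q^{a_2+a_4+\cdots+a_{2m}}M^+_q(a_1,\ldots,a_{2m})$, and Proposition~\ref{MqCont}(ii) gives the first column of $M^+_q(a_1,\ldots,a_{2m})$ as $\left(\begin{smallmatrix}K^+_{2m}(a_1,\ldots,a_{2m})_q\\ K^+_{2m-1}(a_2,\ldots,a_{2m})_q\end{smallmatrix}\right)$. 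Equating the two expressions for the first column of $\widetilde M^+_q$ and cancelling the factor $q$ (legitimate in $\Z[q]$, both sides being visibly divisible by $q$) yields $\Rc=q^{a_2+a_4+\cdots+a_{2m}-1}K^+_{2m}(a_1,\ldots,a_{2m})_q$ and $\Sc=q^{a_2+a_4+\cdots+a_{2m}-1}K^+_{2m-1}(a_2,\ldots,a_{2m})_q$.

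It remains to record the identity $k=a_2+a_4+\cdots+a_{2m}$ appearing in the exponent: by the conversion formula~\eqref{HZRegEqt}, the sequence $(c_1,\ldots,c_k)$ consists of the $m$ entries $a_1+1,a_3+2,\ldots,a_{2m-1}+2$ together with $\sum_{i=1}^{m}(a_{2i}-1)$ entries equal to $2$, so its length is $m+\bigl(\sum_{i=1}^{m}a_{2i}-m\bigr)=a_2+a_4+\cdots+a_{2m}$. I do not expect a genuine obstacle here; the whole argument is a repackaging of results already in place. The only point requiring a word of care is the identification used in the first paragraph of the $\Rc,\Sc$ produced by the matrices of convergents with the reduced numerator and denominator of~\eqref{RcSc} — this is precisely what the coprimality statement of \S\ref{propRS} provides (and Theorem~\ref{Thmqa} ensures that both continued-fraction expansions indeed define the same $q$-rational, so the two descriptions of $\Rc,\Sc$ are consistent).
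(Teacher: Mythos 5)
Your proposal is correct and follows exactly the route the paper takes: the paper's proof is the one-line remark ``Combining the above proposition [Proposition~\ref{MqCont}] with Proposition~\ref{PropMMBis}, one gets the following formulas,'' and your argument is precisely that combination (via Proposition~\ref{FacMM}) spelled out, including the correct accounting of the factor $q$ coming from $R_q$ and the length count $k=a_2+a_4+\cdots+a_{2m}$ from~\eqref{HZRegEqt}.
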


\begin{rem}\label{mirform}
(i) Computing $
M_{q}(c_{1}, \ldots, c_{k})^{-1}$ one easily obtains  the ``mirror formula'':
$$K_{k}(c_1,\ldots,c_k)_{q}=q^{c_{1}+\ldots+c_{k}-k}K_{k}(c_k,\ldots,c_1)_{q^{-1}}.$$

(ii) By Corollary \ref{qCoCoOvThm},
if $(c_{1}, \ldots, c_{n})$ is the quiddity sequence of a triangulated $n$-gon one has
$$ 
K_{k}(c_1,\ldots,c_k)_{q}=K_{n-k-2}(c_{k+2},\ldots,c_n)_{q}.$$
\end{rem}

%%%%%%%%%%%%%%%%%%%%
\subsection{Euler's identities for the $q$-deformed continuants}\label{EIdSec}
%%%%%%%%%%%%%%%%%%%%

We give an analogue of Euler's identity for the $q$-continuants. 

Fix a sequence of integers $(c_{i})_{1\leq i\leq n}$ and denote by~$K^{q}_{i,j}$
the continuant of order~$j-i-1$ generated by the segment between $c_{i+1}$ and~$c_{j-1}$:
$$
K^{q}_{i,j}:=K_{j-i-1}(c_{i+1},\ldots,c_{j-1})_{q} ,
$$
with the convention $K^{q}_{i,i}=0$ and $K^{q}_{i,i+1}=1$.

\begin{prop}
\label{EulqProp}
The sequence $(K^{q}_{i,j})_{1\leq i \leq j \leq n}$ satisfies the relation:
\begin{equation}\label{qPto}
K^{q}_{i,k}\,K^{q}_{j,l}=q^{c_{j}+\ldots+c_{k-1}-(k-j)}\,K^{q}_{i,j}\,K^{q}_{k,l}\,+\,K^{q}_{j,k}\,K^{q}_{i,l}
\end{equation}
provided~$1\leq{}i<j<k<\ell\leq{}n$.
\end{prop}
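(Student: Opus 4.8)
The plan is to translate \eqref{qPto} into an identity for products of $2\times2$ matrices, where it reduces to a polynomial identity valid for \emph{arbitrary} matrices. Recall from \eqref{qNegMat} and the proof of Proposition~\ref{PropMM} that $M_q(c_1,\dots,c_m)=T_1T_2\cdots T_m$, where $T_p:=\bigl(\begin{smallmatrix}[c_p]_q & -q^{c_p-1}\\ 1 & 0\end{smallmatrix}\bigr)=R_q^{c_p}S_q$ and $\det T_p=q^{c_p-1}$. For $1\le a\le b\le n$ I set $\cM_{a,b}:=M_q(c_{a+1},\dots,c_{b-1})=T_{a+1}\cdots T_{b-1}$ (the empty product $\Id$ when $b=a+1$). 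Proposition~\ref{MqCont}(i) then gives the dictionary
\[
(\cM_{a,b})_{11}=K^{q}_{a,b},\qquad (\cM_{a,b})_{21}=K^{q}_{a+1,b},\qquad \det\cM_{a,b}=q^{\,c_{a+1}+\cdots+c_{b-1}-(b-1-a)}.
\]

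Since each $\cM_{a,b}$ is a \emph{contiguous} subproduct of the $T_p$'s, for $i<j<k<\ell$ one reads off at once the concatenation relations
\[
\cM_{i,k}=\cM_{i,j}\,\cM_{j-1,k},\qquad \cM_{j-1,\ell}=\cM_{j-1,k}\,\cM_{k-1,\ell},\qquad \cM_{i,\ell}=\cM_{i,j}\,\cM_{j-1,k}\,\cM_{k-1,\ell},
\]
e.g.\ $\cM_{i,j}\cM_{j-1,k}=(T_{i+1}\cdots T_{j-1})(T_j\cdots T_{k-1})=T_{i+1}\cdots T_{k-1}$; the index shift to $j-1$ on the right-hand block just records that cutting at $c_j$ leaves $c_j$ as the first entry of that block.

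The algebraic core of the argument is the identity, valid for arbitrary $2\times2$ matrices $P,A,B$ over a commutative ring,
\[
(PA)_{11}\,(AB)_{21}-A_{21}\,(PAB)_{11}=\det(A)\,P_{11}\,B_{21}.
\]
I would prove it by expanding both sides in the entries of $P$, $A$, $B$; every monomial not proportional to $\det A$ cancels in pairs, so it is a short direct check (and the whole point is that the ``cross'' products $(PA)_{11}$, $(AB)_{21}$, $A_{21}$, $(PAB)_{11}$ are exactly the four continuants that appear on the left of \eqref{qPto}).

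Finally I specialize $P:=\cM_{i,j}$, $A:=\cM_{j-1,k}$, $B:=\cM_{k-1,\ell}$. By the concatenations $PA=\cM_{i,k}$, $AB=\cM_{j-1,\ell}$ and $PAB=\cM_{i,\ell}$, so the dictionary reads off $(PA)_{11}=K^{q}_{i,k}$, $(AB)_{21}=K^{q}_{j,\ell}$, $A_{21}=K^{q}_{j,k}$, $(PAB)_{11}=K^{q}_{i,\ell}$, $P_{11}=K^{q}_{i,j}$, $B_{21}=K^{q}_{k,\ell}$, whereas $\det A=\det\cM_{j-1,k}=q^{\,c_j+\cdots+c_{k-1}-(k-j)}$. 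Substituting into the universal identity yields $K^{q}_{i,k}K^{q}_{j,\ell}-K^{q}_{j,k}K^{q}_{i,\ell}=q^{\,c_j+\cdots+c_{k-1}-(k-j)}K^{q}_{i,j}K^{q}_{k,\ell}$, which is precisely \eqref{qPto} (and at $q=1$ recovers Euler's classical continuant identity). The one step that needs care — and where sign/index slips are easy — is pinning down the index shifts in the matrix--continuant dictionary and in the concatenations; once those conventions are fixed there is no genuine obstacle.
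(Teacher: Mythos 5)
Your proof is correct. All the ingredients check out: the dictionary $(\cM_{a,b})_{11}=K^{q}_{a,b}$, $(\cM_{a,b})_{21}=K^{q}_{a+1,b}$ follows from Proposition~\ref{MqCont}(i) applied to the subsequence $(c_{a+1},\ldots,c_{b-1})$ (and is consistent with the conventions $K^q_{i,i}=0$, $K^q_{i,i+1}=1$ via the empty product); the concatenation relations are immediate; the universal identity $(PA)_{11}(AB)_{21}-A_{21}(PAB)_{11}=\det(A)\,P_{11}B_{21}$ does hold for arbitrary $2\times2$ matrices over a commutative ring (direct expansion, as you say); and $\det\cM_{j-1,k}=q^{c_j+\cdots+c_{k-1}-(k-j)}$ gives exactly the right power of $q$. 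The route is genuinely different from the paper's, though both rest on the matrix calculus of Section~\ref{qMatSec}. The paper forms the $2\times n$ matrix of convergents $(\Rc_i,\Sc_i)$, invokes the Pl\"ucker relation $\Delta_{i,k}\Delta_{j,\ell}=\Delta_{i,j}\Delta_{k,\ell}+\Delta_{j,k}\Delta_{i,\ell}$ for its $2\times2$ minors, and then proves a renormalization $\Delta_{i,j}=q^{c_1+\cdots+c_{i-1}-(i-1)}K^q_{i,j}$ by multiplying $M_q(c_1,\ldots,c_{j-1})$ on the left by the inverse of $M_q(c_1,\ldots,c_{i-1})$; the powers of $q$ then cancel in pairs. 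You instead work entirely with the local subproducts $\cM_{a,b}$, so no renormalization or matrix inversion is needed and the $q$-power appears transparently as $\det\cM_{j-1,k}$; the price is that you must verify the three-matrix entry identity by hand rather than quoting Pl\"ucker. Your version is arguably more self-contained and less prone to bookkeeping errors with the exponents, while the paper's version makes the connection to Pl\"ucker--Ptolemy relations (and the Grassmannian picture the authors emphasize elsewhere) explicit.
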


\begin{proof}
We consider a $2\times n$ matrix whose first row is given by $(\Rc_{i})_{1\leq i \leq n}$ and second row by $(\Sc_{i})_{1\leq i \leq n}$ where
$$
\Rc_{i}:=K_{i-1}(c_{1},\ldots,c_{i-1})_{q} ,\quad \Sc_{i}:=K_{i-2}(c_{2},\ldots,c_{i-1})_{q} .
$$
The $2\times 2$-minors of this matrix 
$$
\Delta_{i,j}:=\Rc_{i}\Sc_{j}-\Rc_{j}\Sc_{i},
$$
satisfy the Pl\"ucker-Ptolemy relations
\begin{equation}\label{DelPto}
\Delta_{i,k}\Delta_{j,l}=\Delta_{i,j}\Delta_{k,l}+\Delta_{j,k}\Delta_{i,l}
\end{equation}
provided~$1<{}i<{}j<k<{}\ell\leq{}n$.
We now express these minors in terms of $q$-continuants to obtain the desired indentity.
We use $2\times 2$-matrix computations. When an entry in the matrix is not relevant for the computation we replace it by a `star' symbol.
By Proposition \ref{FacMM} one has 
$$
\begin{pmatrix}
\Rc_{j}&*\\[6pt]
\Sc_{j}&*
\end{pmatrix}=M_{q}(c_{1},\ldots, c_{j-1}) =
M_{q}(c_{1},\ldots, c_{i-1})M_{q}(c_{i},\ldots, c_{j-1}).
$$
We multiply the above matrix equation on the left by 
$$
M_{q}(c_{i},\ldots, c_{i-1})^{-1}=
\begin{pmatrix}
\Rc_{i}&*\\[6pt]
\Sc_{i}&*
\end{pmatrix}
^{-1}
=
q^{-(c_{1}+\ldots+c_{i-1}-(i-1))}
\begin{pmatrix}
*&*\\[6pt]
-\Sc_{i}&\Rc_{i}
\end{pmatrix}
$$ 
and obtain, according to Proposition \ref{MqCont}
$$
\begin{pmatrix}
*&*\\[6pt]
\Delta_{i,j}&*
\end{pmatrix}=q^{c_{1}+\ldots+c_{i-1}-(i-1)}
M_{q}(c_{i},\ldots, c_{j-1})
=
q^{c_{1}+\ldots+c_{i-1}-(i-1)}\begin{pmatrix}
*&*\\[6pt]
K_{i,j}^{q}&*
\end{pmatrix}
$$
Rewriting \eqref{DelPto} using $\Delta_{i,j}=q^{c_{1}+\ldots+c_{i-1}-(i-1)}K_{i,j}^{q}$ gives \eqref{qPto}. 
\end{proof}
%%%%%%%%%%%%%%%%%%%%
%%%%%%%%%%%%%%%%%%%%
\section{Two examples of infinite sequences: $q$-Fibonacci and $q$-Pell numbers}\label{FiboSec}
%%%%%%%%%%%%%%%%%%%%
%%%%%%%%%%%%%%%%%%%%

In this section we consider $q$-analogues of two examples of
infinite sequences of rationals.

\begin{enumerate}
\item[{\bf A}]
The sequence of
convergents of the simplest infinite continued fraction representing the golden ratio
$\frac{1+\sqrt{5}}{2}=[1,1,1,1,\ldots]$.
More precisely, we have
$\frac{r_{n}}{s_n}=\frac{F_{n+1}}{F_n},$
where~$F_n$ is the~$n^{\rm th}$ Fibonacci number.

\item[{\bf B}]
The sequence of
convergents of the continued fraction
$1+\sqrt{2}=[2,2,2,2,\ldots]$ called the ``silver ratio''.
These are the ratios of consecutive Pell numbers:
$\frac{r_{n}}{s_n}=\frac{P_{n+1}}{P_n}$
(recall that the sequence~$(P_n)_{n\geq0}=(0, 1, 2, 5, 12, 29, 70, 169, 408,\ldots)$ is A000129 of OEIS).
\end{enumerate}

We calculate the $q$-deformations of these two sequences, according to our Definition~\ref{qDefn},
and this leads to $q$-deformations of Fibonacci and Pell numbers.
It turns out that the $q$-deformation of the Fibonacci numbers standing in the denominator is the
well-known Sequence A079487, while the $q$-deformation of the Fibonacci numbers
in the numerator is the ``mirror''  Sequence A123245 of OEIS.
Surprisingly, the $q$-deformation of the Pell numbers appears to be a new version.

Let us mention that $q$-deformations of Fibonacci and Pell numbers is an active subject
related to several branches of combinatorics and number theory
(see, e.g.,~\cite{Car,ABF,And,Pan} and references therein).

%%%%%%%%%%%%%%%%%%%%
\subsection{Sequence A079487 and its mirror A123245}
%%%%%%%%%%%%%%%%%%%%

Consider the infinite triangle of positive integers $\left(f_{i,j}\right)_{i,j\geq2}$, 
with the initial conditions $f_{2,0}=f_{3,0}=f_{3,1}=1$ and the convention $f_{i,j}=0$, for $j<0$,
and defined by the recurrence relation
\begin{equation}
\label{FibiRecEq}
\begin{array}{rcl}
f_{2i+1,j}&=&f_{2i,j-1}+f_{2i-1,j},\\[4pt]
f_{2i+2,j}&=& f_{2i+1,j} + f_{2i,j-2},
\end{array}
\end{equation}
for~$i\geq1$.
Sequence A079487 is known under the name of the second kind of Whitney numbers of Fibonacci lattices.
The sum of the numbers in~$i^{\rm th}$ row is equal to the Fibonacci number~$F_{i+1}$.

Sequence A123245 is the {\it mirror} of A079487, 
i.e., this is the triangle A079487 with reversed rows.
Using the notation~$\left(\tilde f_{i,j}\right)_{i,j\geq0}$, 
the recurrence for A123245 is the same as of~\eqref{FibiRecEq}
with the parity exchanged:
\begin{equation}
\label{FibiRecBisEq}
\begin{array}{rcl}
\tilde f_{2i+2,j}&=&\tilde f_{2i+1,j-1} +\tilde f_{2i,j},\\[4pt]
\tilde f_{2i+1,j}&=&\tilde f_{2i,j}+\tilde f_{2i-1,j-2},
\end{array}
\end{equation}
for~$i\geq1$.

The triangles A079487 and  A123245 start as follows
$$
\begin{array}{rcccccccc}
1\\
1&1\\
1&1&1\\
1&2&1&1\\
1&2&2&2&1\\
1&3&3&3&2&1\\
1&3&4&5&4&3&1\\
1&4&6&7&7&5&3&1\\
1&4&7&10&11&10&7&4&1\\
\cdots
\end{array}
\qquad\qquad
\begin{array}{rcccccccc}
1\\
1&1\\
1&1&1\\
1&1&2&1\\
1&2&2&2&1\\
1&2&3&3&3&1\\
1&3&4&5&4&3&1\\
1&3&5&7&7&6&4&1\\
1&4&7&10&11&10&7&4&1\\
\cdots
\end{array}
$$

Consider the polynomials
\begin{equation}
\label{Fibop}
\F_{n+1}(q):=\sum_{0\leq{}i\leq{}n}f_{n,i}\,q^i
\qquad\hbox{and}\qquad
\tilde \F_{n+1}(q):=\sum_{0\leq{}i\leq{}n}\tilde f_{n,i}\,q^i.
\end{equation}
This recurrences~\eqref{FibiRecEq} and~\eqref{FibiRecBisEq} then read
\begin{equation}
\label{FibopRel}
\begin{array}{rcl}
\F_{2\ell+1} &=& q\F_{2\ell}+\F_{2\ell-1},\\[4pt]
\F_{2\ell+2} &=& \F_{2\ell+1}+q^2\F_{2\ell},
\end{array}
\qquad
\begin{array}{rcl}
\tilde\F_{2\ell+1} &=& \tilde\F_{2\ell}+q^2\tilde\F_{2\ell-1},\\[4pt]
\tilde\F_{2\ell+2} &=& q\tilde\F_{2\ell+1}+\tilde\F_{2\ell},
\end{array}
\end{equation}
for~$\ell\geq1$, respectively.

The numbers~$f_{i,j}$ and~$\tilde f_{i,j}$ have many interesting combinatorial interpretations~\cite{MZS} 
(for a nice survey; see~\cite{KK}).
In particular, the numbers~$f_{2i,j}=\tilde f_{2i,j}$ are related to the following graph
\begin{equation}
\label{FenceOneEq}
\xymatrix @!0 @R=1.2cm @C=0.7cm
{
&\circ\ar@{->}[rd]\ar@{->}[ld]&&\circ\ar@{->}[ld]
\ar@{->}[rd]&&\circ\ar@{->}[ld]
\ar@{->}[rd]&&\circ\ar@{->}[ld]\ar@{->}[rd]&&\cdots&&
\circ\ar@{->}[rd]\ar@{->}[ld]&&\circ\ar@{->}[ld]\\
\circ&&\circ&&\circ&&\circ&&&\cdots&&
&\circ&
}
\end{equation}
with $2i-2$ vertices, called the ``fence''.
More precisely,
the number~$f_{2i,j}$ is equal to the number of $j$-point closures (cf. Definition~\ref{CloDef}) of this graph.
Similarly, the number~$\tilde f_{2i+1,j}$  is equal to the number of $j$-point closures of the graph
\begin{equation}
\label{FenceEq}
\xymatrix @!0 @R=1.2cm @C=0.7cm
{
\circ\ar@{->}[rd]&&\circ\ar@{->}[ld]
\ar@{->}[rd]&&\circ\ar@{->}[ld]
\ar@{->}[rd]&&\circ\ar@{->}[ld]\ar@{->}[rd]&&\cdots&&
\circ\ar@{->}[rd]\ar@{->}[ld]&&\circ\ar@{->}[ld]\\
&\circ&&\circ&&\circ&&&\cdots&&
&\circ&
}
\end{equation}
with $2i-1$ vertices.
Finally, the numbers~$f_{2i+1,j}$ count $j$-point closures of the same graph but with reversed orientation.

%%%%%%%%%%%%%%%%%%%%
\subsection{Quantization of the ratio of Fibonacci numbers}
%%%%%%%%%%%%%%%%%%%%
It turns out that the polynomials
arising from the $q$-deformations of the sequence
$\frac{F_{n+1}}{F_n}$ 
coincide with the polynomials~\eqref{Fibop}.
More precisely, one has the following.

\begin{prop}
\label{FibProp}
One has~$\left[\frac{F_{n+1}}{F_n}\right]_q=\frac{\tilde\F_{n+1}(q)}{\F_n(q)}$.
\end{prop}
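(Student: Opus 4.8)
The plan is to prove the identity by showing that both sides satisfy the same recurrence with the same initial conditions. First I would recall that $\frac{F_{n+1}}{F_n}$ has the regular continued fraction expansion $[1,1,\dots,1]$ with $n$ entries, so by Definition~\ref{qDefn} and the recurrence relations~\eqref{LinRecEq} for the convergents (or equivalently Proposition~\ref{FacMM} and the matrix formula~\eqref{MaGen}), the numerator $\Rc_n$ and denominator $\Sc_n$ of $\left[\frac{F_{n+1}}{F_n}\right]_q$ obey linear recursions with polynomial coefficients. The cleanest route is via the negative continued fraction expansion: by~\eqref{HZRegEqt}, $\frac{F_{n+1}}{F_n}=\llbracket{}c_1,\dots,c_k\rrbracket$ where the $c_i$ alternate in a predictable pattern (for $[1,1,1,1]$ one gets $\llbracket 2,3\rrbracket$, for $[1,1,1,1,1,1]$ one gets $\llbracket 2,3,2,3\rrbracket$ roughly speaking), and then~\eqref{LinRecEq} gives a recursion for $\Rc_i,\Sc_i$. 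Alternatively, and perhaps more transparently, I would work directly with the weighted Farey sum: the triangulation $\T^q_{F_{n+1}/F_n}$ embeds in the Farey graph as a ``zig-zag'' path, and Theorem~\ref{WeightThm} lets one read off how $\left[\frac{F_{n+1}}{F_n}\right]_q$ is built from $\left[\frac{F_n}{F_{n-1}}\right]_q$ and $\left[\frac{F_{n-1}}{F_{n-2}}\right]_q$ via $\oplus_q$.

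The key computation is to identify the weights $q^\ell$ appearing in these successive weighted Farey sums. Since the Fibonacci convergents are obtained by the plain Farey sum $\frac{F_{n+1}}{F_n}=\frac{F_n}{F_{n-1}}\oplus\frac{F_{n-1}}{F_{n-2}}$, and the weight $\ell$ in~\eqref{WFSEq} equals the last coefficient $c_i$ of the negative expansion of the resulting fraction (as noted after Definition~\ref{qFDef}), I would track that last coefficient. For the ratios of consecutive Fibonacci numbers this last coefficient alternates between the value coming from an ``odd'' step and an ``even'' step of the continued fraction; concretely one finds the weights alternate between $q^1$ and $q^2$ (together with the exceptional initial weights $1$ and $q^{-1}$ from the initial triangle). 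Feeding $\ell=1$ and $\ell=2$ alternately into $\Rc_n=\Rc_{n-1}+q^\ell\Rc_{n-2}$ and $\Sc_n=\Sc_{n-1}+q^\ell\Sc_{n-2}$ reproduces exactly the recurrences~\eqref{FibopRel}: the denominator satisfies $\F_{2\ell+1}=q\F_{2\ell}+\F_{2\ell-1}$, $\F_{2\ell+2}=\F_{2\ell+1}+q^2\F_{2\ell}$, and the numerator satisfies the parity-swapped version defining $\tilde\F$.

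After matching the recurrences, I would check the base cases: for small $n$ one has $\left[\frac21\right]_q=\frac{1+q}{1}$, $\left[\frac32\right]_q=\frac{1+q+q^2}{1+q}$, $\left[\frac53\right]_q=\frac{1+q+2q^2+q^3}{1+q+q^2}$ (these are in Example~\ref{FirstEx}), and compare with $\tilde\F_2=1+q$, $\F_1=1$; $\tilde\F_3 = 1+q+q^2$, $\F_2=1+q$; $\tilde\F_4=1+q+2q^2+q^3$, $\F_3=1+q+q^2$ read off from the displayed triangles. Matching two consecutive base cases together with the two-step recurrence then forces the identity for all $n$ by induction. The main obstacle I anticipate is purely bookkeeping: pinning down precisely which weight ($q^1$ versus $q^2$) occurs at each step and aligning the parity conventions of~\eqref{FibopRel} with the indexing of the Farey-sum recursion, including the role of the anomalous weight $q^{-1}$ on the side of the initial triangle that, as remarked after Definition~\ref{WeightDef}, does not enter the continued-fraction computation. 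Everything else is a routine induction.
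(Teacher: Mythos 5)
Your proposal is correct and follows essentially the same route as the paper: the paper's proof simply exhibits the weighted triangulation $\T^q_{F_{n+1}/F_n}$ associated with $[1,1,1,\ldots]$, observes that the relevant edge weights alternate between $q$ and $q^2$, and concludes that the weighted Farey sum reproduces the recurrences~\eqref{FibopRel}. Your extra care with the base cases and with tracking which parent receives the weight $q^{\ell}$ at each step is precisely the bookkeeping the paper leaves implicit.
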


\begin{proof}
The triangulation corresponding to the continued fraction $[1,1,1,1,\ldots]$ is
$$
\xymatrix @!0 @R=0.6cm @C=1cm
{
&\bullet\ar@{-}[ldd]\ar@{-}[dd]\ar@{-}[r]^{q}
&\bullet\ar@{-}[ldd]\ar@{-}[r]^{q^2}\ar@{-}[dd]_q
&\bullet\ar@{-}[ldd]\ar@{-}[r]^{q^2}\ar@{-}[dd]_q
&\bullet\ar@{-}[ldd]&&\bullet\ar@{-}[ldd]\ar@{-}[r]^{q^2}\ar@{-}[dd]_q
&\bullet\ar@{-}[ldd]
\\&&&&\cdots&\cdots
\\
\bullet\ar@{-}[r]
&\bullet\ar@{-}[r]
&\bullet\ar@{-}[r]
&\bullet&&\bullet\ar@{-}[r]
&\bullet
}
$$
cf. Sections~\ref{Trs} and~\ref{WeTr}.
(Note that we mark only those diagonals that have weights different from~$1$.)
Therefore, the polynomials in the numerator and denominator of~$\left[\frac{F_{n+1}}{F_n}\right]_q$ satisfy the recurrence~\eqref{FibopRel}.
\end{proof}

\begin{rem}
The graph $\G_{r_n/s_n}$ is~\eqref{FenceEq}.
It was proved in~\cite{MZS}, that the coefficients of the polynomials~$\F_n(q)$ count its closures.
Theorem~\ref{EnumThm} is a generalization of this result.
\end{rem}

%%%%%%%%%%%%%%%%%%%%
\subsection{A $q$-deformation of the Pell numbers}
%%%%%%%%%%%%%%%%%%%%

The $q$-continued fraction $[2,2,2,2,\ldots]_q$ corresponds to the triangulation
$$
\xymatrix @!0 @R=0.7cm @C=1.2cm
{
&\bullet\ar@{-}[ldd]\ar@{-}[dd]\ar@{-}[r]^{q^2}\ar@{-}[rdd]_q
&\circ\ar@{-}[r]^{q}\ar@{-}[dd]
&\bullet\ar@{-}[ldd]\ar@{-}[dd]_q\ar@{-}[rdd]_{q^2}\ar@{-}[r]^{q^3}
&\circ\ar@{-}[r]^{q}\ar@{-}[dd]&\bullet\ar@{-}[ldd]&&\bullet\ar@{-}[rdd]_{q^2}\ar@{-}[r]^{q^3}&\circ\ar@{-}[r]^{q}\ar@{-}[dd]
&\bullet\ar@{-}[ldd]
\\&&&&&&\cdots&&&\cdots
\\
\bullet\ar@{-}[r]
&\circ\ar@{-}[r]
&\bullet\ar@{-}[r]
&\circ\ar@{-}[r]&\bullet&&&\circ\ar@{-}[r]
&\bullet
}
$$
where the black vertices correspond to the convergents of the continued fraction.

Therefore, the convergents of $[2,2,2,2,\ldots]_q$ are of the form 
$$
\frac{r_{n}}{s_n}=\frac{\Pc_{n+1}}{\tilde\Pc_n},
$$ 
where~$\left(\Pc_n(q)\right)_{n\geq1}$ is
the series of polynomials starting from
$\Pc_1(q)=1$, $\Pc_2(q)=1+q$ and satisfying the recurrence
$$
\begin{array}{rcl}
\Pc_{2\ell+1} &=& \left(1+q\right)\Pc_{2\ell}+q^4\,\Pc_{2\ell-1},\\[4pt]
\Pc_{2\ell+2} &=& \left(q+q^2\right)\Pc_{2\ell+1}+\Pc_{2\ell},
\end{array}
$$
for~$\ell\geq1$, and where $\tilde\Pc_n$ satisfy the same recurrence with reversed parity:
$$
\begin{array}{rcl}
\tilde\Pc_{2\ell+1} &=& \left(q+q^2\right)\tilde\Pc_{2\ell}+\tilde\Pc_{2\ell-1},\\[4pt]
\tilde\Pc_{2\ell+2} &=& \left(1+q\right)\tilde\Pc_{2\ell+1}+q^4\,\tilde\Pc_{2\ell},
\end{array}
$$
for~$\ell\geq1$.

The triangle of the coefficients,~$P_{n,j}$, of the polynomials~$\Pc_n$ starts as follows
$$
\begin{array}{rccccccccccccccccc}
1\\
1&1\\
1&2&1&1\\
1&2&3&3&2&1\\
1&3&5&6&6&5&2&1\\
1&3&7&11&13&13&11&7&3&1\\
1&4&10&18&25&29&29&24&16&9&3&1\\
1&4&12&25&41&56&65&65&56&41&25&12&4&1\\
1&5&16&37&67&101&131&148&146&126&95&61&32&14&4&1\\
1&5&18&46&94&160&233&297&335&335&297&233&160&94&46&18&5&1\\
\cdots
\end{array}
$$
The triangle of the coefficients,~$\tilde P_{n,j}$, of the polynomials~$\tilde\Pc_n$
is the mirror of the triangle~$P_{n,j}$.
The sums of the numbers in the rows are the classical Pell numbers:
$1,2,5,12,29,70,169,408,985,2378,\ldots$
Note that the above triangle of positive integers is not in the OEIS,
and we did not find the polynomials~$\Pc_n$ in the literature.

It follows from Theorem~\ref{EnumThm} that the coefficient~$P_{n,j}$ is equal to the number of
$j$-vertex closures of the following graph with $2n-4$ vertices
$$
\xymatrix @!0 @R=0.8cm @C=0.8cm
{
&\circ\ar@{->}[ld]\ar@{->}[rd]&&&&\circ\ar@{->}[ld]\ar@{->}[rd]&&
&\cdots&&&\circ\ar@{->}[ld]\ar@{->}[rd]&\\
\circ&&\circ\ar@{->}[rd]&&\circ\ar@{->}[ld]&&\circ\ar@{->}[rd]&
&\cdots&&\circ\ar@{->}[ld]&&\circ\\
&&&\circ&&&&\circ
&\cdots&\circ&&&
}
$$

\begin{rem}
It also follows from Theorem~\ref{EnumThm} that the Pell number~$P_n$ 
counts the total number of closures of the above graph.
This seems to be a new interpretation of the classical Pell numbers.
\end{rem}

%%%%%%%%%%%%%%%%%%%%
%%%%%%%%%%%%%%%%%%%%
\section{Closing remarks and open questions}\label{quSec}
%%%%%%%%%%%%%%%%%%%%
%%%%%%%%%%%%%%%%%%%%

In this final section we formulate some open questions and conjectures that arose as a result of
experimentation with $q$-rationals.

 \medskip
%%%%%%%%%%%%%%%%%%%%
{\bf Unimodality conjecture.}
%%%%%%%%%%%%%%%%%%%%
For all examples of $q$-rationals we considered so far,
the numerator and denominator,~$\Rc$ and~$\Sc$, both have the following unimodality property.
The coefficients of the polynomials~$\Rc$ and~$\Sc$ monotonously increase from~$1$ to the maximal value
(that can be taken by one or more consecutive coefficients) and then monotonously decrease to~$1$.
We are convinced that the polynomials~$\Rc$ and~$\Sc$ have this property for an arbitrary $q$-rational.

Let us mention that the unimodality property can be an indication of relationship
of $q$-rationals to other theories where a similar phenomenon occurs,
such as cohomology of symplectic varieties,
associated with quivers.

 \medskip
%%%%%%%%%%%%%%%%%%%%
{\bf Enumeration questions.}
%%%%%%%%%%%%%%%%%%%%
It is a challenging problem to find more different combinatorial and geometric interpretations
of the polynomials~$\Rc$ and~$\Sc$, besides Theorem~\ref{EnumThm}.

A natural question is to connect the polynomials~$\Rc$ and~$\Sc$ with counting of points
in varieties defined over the finite fields $\mathbb{F}_{q}$.
This property would be similar to that of Gaussian $q$-binomial coefficients.

For every rational~$\frac{r}{s}$, there exist a combinatorial interpretation of~$r$ and~$s$ in terms of
the number of perfect matchings of so-called ``snake graphs''; see~\cite{CS1}.
Note that this counting is closely related with that of~\cite{BCI}.
(Recall that the entries of a Coxeter frieze pattern~\cite{Cox} are precisely the numerator/denominator
of the negative continued fraction with the coefficients in the second row of the frieze.)

Let us also mention that in one of the examples considered in Section~\ref{FiboSec} several combinatorial
interpretations are available.
The sequence A079487 has a number of different combinatorial interpretations
including some particular Motzkin paths and combinations of binomial coefficients;
cf.~\cite{KK}.
This sequence is also related to binary words, cf.~A078807 and to perfect matching 
(or ``dominoes''; see~\cite{KK}).

 \medskip
%%%%%%%%%%%%%%%%%%%%
{\bf Connection to quantum Teichm\"uller space.}
%%%%%%%%%%%%%%%%%%%%
The elementary matrices
$R_{q},L_{q},S_{q}$ (cf. Definition~\ref{RqLqSq}) are very similar to the matrices used in the context of
quantized Teichm\"uller space\footnote{
This was pointed out to us by Michael Shapiro.}; 
see, e.g.,~\cite{CF,CP} (cf. formulas (2.1)--(2.4) of~\cite{CP}).
The notion of $q$-rational is thus closely related to that of ``quantum geodesic length'',
evaluated at the diagonal (i.e., with all the parameters equal to~$q$).
It would be interesting to investigate this relationship.
In particular, some of the properties described in the present paper (and in~\cite{QR})
are still valid in the multi-parameter situation.

\appendix
\addtocontents{toc}{\protect\setcounter{tocdepth}{1}}
%%%%%%%%%%%%%%%%%%%%
%%%%%%%%%%%%%%%%%%%%
\section{Jones polynomial and  $q$-continued fractions}\label{jones}
%%%%%%%%%%%%%%%%%%%%
%%%%%%%%%%%%%%%%%%%%

As an application of the developed notion of $q$-rational numbers,
we relate this notion with the Jones polynomials of 
rational knots. 
This particular class of knots are parametrized by rational numbers, see e.g. \cite{KL}. 
It turns out that the Jones polynomial of the knot parametrized by the rational $\frac{r}{s}$ can be computed from the polynomials $\Rc$ and $\Sc$ of the $q$-rational 
$\left[\frac{r}{s}\right]_{q}$.\\

Recently, the
 Jones polynomial was computed using the combinatorics of snake graphs and cluster algebras~\cite{LeSc}, 
and in the the Kauffman bracket form it 
was computed from the combinatorics of Stern-Brocot tree~\cite{KoWa}. 
The combinatorics used in these papers is closely related to that
 of continued fractions, extensively used in the present paper.

The formula given in \cite[Thm 1.2 (b)]{LeSc} involves a certain $q$-continued fraction. It can be easily rewritten 
under the form \eqref{qa}. With this observation, we reformulate the result of~\cite{LeSc} in terms of $q$-rationals and $q$-continuants.

%%%%%%%%%%%%%%%%%%%%
\subsection{Jones polynomial and $q$-rationals}
%%%%%%%%%%%%%%%%%%%%
We denote by $C(\frac{r}{s})$ the class of rational knots parametrized by the rational $\frac{r}{s}>1$.
%One knows that the rational knots $C(\frac{r}{r-s})$. 
The Jones polynomial of $C(\frac{r}{s})$ is denoted by
$$
V_{\frac{r}{s}}\in t^{\half}\Z[t,t^{-1}]\cup\Z[t,t^{-1}]
$$ 

We describe below a polynomial $J_{\frac{r}{s}}(q)\in \Z_{>0}[q]$  satisfying
$$
V_{\frac{r}{s}}(t)=:\pm t^{p} J_{\frac{r}{s}}(-t^{-1}),
$$
where $t^{p}$ is the leading term of $V_{\frac{r}{s}}$ 
(i.e. the term with greatest positive power).

The `normalized Jones polynomial' $J_{\frac{r}{s}}(q)$ is related with the $q$-rational $\left[\frac{r}{s}\right]_{q}$.
More precisely one has the following statement,
which will be obtained in the next section as a corollary of Proposition \ref{JaJc}.

\begin{prop}\label{JRS}
For every rational $\frac{r}{s}$, the Jones polynomial of $C(\frac{r}{s})$ is 
$$
J_{\frac{r}{s}}(q)=q\Rc(q)+(1-q)\Sc(q).
$$
where 
$
\frac{\Rc(q)}{\Sc(q)}=\left[\frac{r}{s}\right]_{q}
$.
\end{prop}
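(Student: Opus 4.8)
The plan is to first establish the auxiliary Proposition~\ref{JaJc} by recasting the formula of \cite[Thm 1.2 (b)]{LeSc} in the language of $q$-continued fractions, and then to deduce Proposition~\ref{JRS} from it by a short matrix computation together with the positivity results already proved. Concretely, write $\frac rs=[a_1,\ldots,a_{2m}]$ for the regular expansion. The Lee--Schiffler formula presents a normalized Jones polynomial of $C(\frac rs)$ as a continued fraction in the knot variable $t$ whose partial quotients are assembled from the $[a_i]_{\pm t^{\mp1}}$. Under the substitution $q=-t^{-1}$, and after dividing out the leading monomial $t^{p}$, this expression is an instance of the right-hand side of \eqref{qa}; matching the two term by term identifies the underlying rational not as $\frac rs$ but as $1+\frac rs=\frac{r+s}{s}=[a_1+1,a_2,\ldots,a_{2m}]$, the extra $+1$ on the first partial quotient being precisely the writhe/framing correction that turns the Kauffman bracket into the Jones polynomial. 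Combined with the conversion formula \eqref{HZRegEqt} to pass to the negative expansion, this is the content of Proposition~\ref{JaJc}; its proof is the one genuine computation, a direct rewriting of the $t$-continued fraction of \cite{LeSc} into the canonical shape \eqref{qa}.

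Granting Proposition~\ref{JaJc}, Proposition~\ref{JRS} is then bookkeeping. Set $\frac{\Rc_0}{\Sc_0}:=\left[\frac rs\right]_q$ and $\frac{\Rc}{\Sc}:=\left[\frac{r+s}{s}\right]_q$. Since $\frac{r+s}{s}=[a_1+1,a_2,\ldots,a_{2m}]$, Proposition~\ref{PropMM} gives $\widetilde{M}^{+}_{q}(a_1+1,a_2,\ldots,a_{2m})=R_q\,\widetilde{M}^{+}_{q}(a_1,\ldots,a_{2m})$, and multiplying out and comparing first columns using Proposition~\ref{FacMM}(ii) yields
\[
\Rc=q\,\Rc_0+\Sc_0,\qquad \Sc=\Sc_0,
\]
hence $\Rc-q\,\Sc=\Sc_0+q\,(\Rc_0-\Sc_0)$. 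A short check matches this polynomial with the output of Proposition~\ref{JaJc}, which establishes $J_{\frac rs}(q)=\Rc(q)-q\,\Sc(q)$. As a consistency test, $\Rc(1)=r+s$ and $\Sc(1)=s$, so $J_{\frac rs}(1)=r$, the determinant of $C(\frac rs)$, as it must be.

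It remains to see that $J_{\frac rs}\in\Z_{>0}[q]$, and this is immediate from the form $\Sc_0+q(\Rc_0-\Sc_0)$: the polynomial $\Sc_0$ has positive integer coefficients by Proposition~\ref{PosProp}, while $\Rc_0-\Sc_0=\cX_{\frac rs,\,\frac11}$ has positive integer coefficients by Theorem~\ref{PosPropBis} applied to the pair $\frac rs>\frac11$ (with the trivial value $\left[\frac11\right]_q=\frac11$). The main obstacle is the first step: reconciling the sign conventions of the Kauffman bracket with the $q\leftrightarrow-t^{-1}$ substitution and the normalization by $t^{p}$, so as to pin down exactly the shift $\frac rs\mapsto\frac{r+s}{s}$ with no stray monomial factors. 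Once that matching of \cite[Thm 1.2 (b)]{LeSc} against \eqref{qa} is carried out cleanly, everything else is formal $2\times2$-matrix manipulation together with Propositions~\ref{PosProp} and~\ref{FacMM} and Theorem~\ref{PosPropBis}.
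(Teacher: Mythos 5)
Your proposal follows essentially the same route as the paper: reduce Proposition~\ref{JRS} to Proposition~\ref{JaJc} (the rewriting of the Lee--Schiffler determinant into the $q$-continuant form, which the paper executes by explicit diagonal rescaling of rows and columns), and then read off $J_{\frac rs}=\Rc-q\Sc$ via Proposition~\ref{fracont} applied to $\frac{r+s}{s}=\llbracket c_1+1,c_2,\ldots,c_k\rrbracket$. Your detour through $\Rc=q\Rc_0+\Sc_0$, $\Sc=\Sc_0$ and the positivity remark via $\cX_{\frac rs,\frac11}$ are correct minor additions, but the substance is identical.
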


%%%%%%%%%%%%%%%%%%%%
\subsection{Jones polynomial and $q$-continuants}
%%%%%%%%%%%%%%%%%%%%
Using the expansion as regular or negative continued fraction one obtains the following explicit formulas in terms of the $q$-continuants of Section \ref{DefDetSec}.
\begin{prop}\label{JaJc}
(i)
Let $\llbracket c_{1}, \ldots, c_{k}\rrbracket$ be the expansion as negative continued fraction of $\frac{r}{s}>1$. 
The Jones polynomial $J_{\frac{r}{s}}(q)$ is given by
\begin{eqnarray}\label{Jc1}
J_{\frac{r}{s}}(q)
&=&K_{k}(c_{1}+1,\ldots,c_{k})_{q} -qK_{k-1}(c_{2},\ldots,c_{k})_{q}\\[6pt]\label{Jc2}
&=&
\left|
\begin{array}{cccccc}
-q+[c_1+1]_{q}&q^{c_{1}}&&&\\[4pt]
1&[c_{2}]_{q}&q^{c_{2}-1}&&\\[4pt]
&\ddots&\ddots&\!\!\ddots&\\[4pt]
&&1&[c_{k-1}]_{q}&q^{c_{k-1}-1}\\[4pt]
&&&1&[c_{k}]_{q}
\end{array}
\right|.
\end{eqnarray}

(ii) 
Let $[a_{1}, \ldots, a_{2m}]$ be the expansion as regular continued fraction of  $\frac{r}{s}>1$. 
The Jones polynomial $J_{\frac{r}{s}}(q)$ is given by
\begin{eqnarray}
%&=&K^{+}\\[6pt]
\label{Ja1}
&&J_{\frac{r}{s}}(q)=K^{+}_{2m}(a_1+1,\ldots,a_{2m})_{q}-qK^{+}_{2m-1}(a_2,\ldots,a_{2m})_{q}\\\label{Ja}
&&=q^{a_{2}+a_{4}+\ldots+a_{2m}-1}
\left|
\begin{array}{ccccccccc}
-q+[a_1+1]_{q}&q^{a_{1}+1}&&&\\[6pt]
-1&[a_{2}]_{q^{-1}}&q^{-a_{2}}&&\\[4pt]
&-1&[a_3]_{q}&q^{a_{3}}&&\\[4pt]
&&-1&[a_{4}]_{q^{-1}}&q^{-a_{4}}&&\\[4pt]
&&&\ddots&\ddots&\!\!\ddots&\\[4pt]
&&&&-1&[a_{2m-1}]_{q}&\!\!\!\!\!q^{a_{2m-1}}\\[6pt]
&&&&&-1&\!\!\!\![a_{2m}]_{q^{-1}}
\end{array}
\right|.
\end{eqnarray}
\end{prop}

\begin{proof}
Theorem 1.2 in \cite{LeSc} states that the Jones polynomial $J_{\frac{r}{s}}(q)$ can be computed as the numerator of a continued fraction. In terms of the continuant the formula in \cite{LeSc} is

$$J_{\frac{r}{s}}(q)= 
q^{\ell_{2m}}\, \det(M)$$

where 
$$M=
\left(
\begin{array}{ccccccccc}
-q+[a_1+1]_{q}&1&&&\\[6pt]
-1&[a_{2}]_{q}q^{-\ell_{2}}&1&&\\[4pt]
&-1&[a_3]_{q}q^{\ell_{2}+1}&1&\\[4pt]
&&-1&[a_{4}]_{q}q^{-\ell_{4}}&1&&\\[4pt]
&&&\ddots&\ddots&\!\!\ddots&\\[4pt]
&&&&-1&[a_{2m-1}]_{q}q^{\ell_{2m}+1}&1\\[6pt]
&&&&&-1&\!\!\!\![a_{2m}]_{q}q^{-\ell_{2m}}
\end{array}
\right).
$$
and  $\ell_{i}=a_{1}+a_{2}+\ldots+a_{i}$. 

It is easy to see that this determinant is the same as the one appearing in Eq.~\eqref{Ja}.
Indeed, one can
rescale the rows of $M$ by multiplying $M$ on the left by the diagonal matrix
$$
D_{1}=diag(1,1, q^{-(a_{1}+1_{})}, q^{a_{2}}, \ldots, q^{-(a_{1}+a_{3}+\ldots+a_{2j-1}+1)}, q^{a_{2}+a_{4}+\ldots+a_{2j}}, \ldots, 
%q^{a_{1}+a_{3}+\ldots+a_{2m-3}+1}, 
q^{-(a_{2}+a_{4}+\ldots+a_{2m-2})}),
$$
and rescale the columns by multiplying $M$ on the left by the diagonal matrix
$$
D_{2}=diag(1, q^{a_{1}+1}, q^{-a_{2}}, \ldots, q^{a_{1}+a_{3}+\ldots+a_{2j-1}+1}, q^{-(a_{2}+a_{4}+\ldots+a_{2j})}, \ldots, q^{a_{1}+a_{3}+\ldots+a_{2m-1}+1} ).
$$
One concludes that
$q^{\ell_{2m}}\det(M)$ is the determinant appearing in formula \eqref{Ja}.
Expanding the determinant one deduces \eqref{Ja1}.

Comparing \eqref{Ja1} with Proposition \ref{fracont} we deduce that $J_{\frac{r}{s}}(q)$ is given as 
$$J_{\frac{r}{s}}(q)=\Rc'-q\Sc'$$
where 
$$
\frac{\Rc'}{\Sc'}=[a_{1}+1, \ldots, a_{2m}]_{q}=\left[\frac{r}{s}+1\right]_{q}=q\left[\frac{r}{s}\right]_{q}+1.
$$ 
This implies~\eqref{Jc1}, \eqref{Jc2}. 

Proposition~\ref{JRS} follows from 
$\Rc'=q\Rc+\Sc$, $\Sc'=\Sc$. 
\end{proof}

%%%%%%%%%%%%%%%%%%%%
\subsection{Jones polynomial and counting on graphs}
%%%%%%%%%%%%%%%%%%%%

Applying the result of Section \ref{EmunSec}, we deduce the following interpretation of the coefficients of the Jones polynomial.

\begin{prop}
Let $[a_{1}, \ldots, a_{2m}]$ be the expansion as regular continued fraction of  $\frac{r}{s}>1$. 
Denote by  $J_{\frac{r}{s}}(q)=\gamma_{0}+\gamma_{1}q+\ldots+\gamma_{d}q^{d}$  the corresponding Jones polynomial where $d=a_{1}+\ldots+a_{2m}$. For all $0\leq \ell \leq d$, $\gamma_{\ell}$ is the number of $\ell$-closures of the graph
$$
\xymatrix @!0 @R=0.8cm @C=1.3cm
{
1\ar@{<-}[r]\ar@/_0.8pc/@{-}[rr]_{a_1}&2\cdots\circ\ar@{<-}[r]&
\circ\ar@{->}[r]\ar@/_0.8pc/@{-}[rr]_{a_2}&\circ\cdots\circ\ar@{->}[r]&
\circ\ar@{<-}[r]\ar@/_0.8pc/@{-}[rr]_{a_3}&\circ\cdots\circ\ar@{<-}[r]&
\circ&\cdots\cdots&
\circ\ar@{->}[r]\ar@/_0.8pc/@{-}[rr]_{a_{2m}-1}&\circ\cdots d-1\ar@{->}[r]&d
}
$$
for which the pair of vertices $(1,2)$ is either in the closure or in its complement.
\end{prop}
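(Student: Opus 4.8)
The plan is to derive this from Proposition~\ref{JRS} together with the enumerative Theorem~\ref{EnumThm}, by a short bijection of closures near the first two vertices. Recall from Proposition~\ref{JRS} that $J_{\frac{r}{s}}(q)=\Rc(q)-q\Sc(q)$, where $\frac{\Rc}{\Sc}=\left[\frac{r+s}{s}\right]_q$. Since $\frac{r}{s}=[a_1,\ldots,a_{2m}]$ gives $\frac{r+s}{s}=\left[\frac{r}{s}\right]+1=[a_1+1,a_2,\ldots,a_{2m}]$, the graph displayed in the statement is exactly $\G_{\frac{r+s}{s}}$ (it has $d=a_1+\cdots+a_{2m}$ vertices), and $\G'_{\frac{r+s}{s}}$ is the subgraph induced on the vertices $\{a_1+2,\ldots,d\}$, i.e. the one obtained by deleting its first $a_1+1$ vertices. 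Theorem~\ref{EnumThm} then yields $\Rc(q)=\sum_\ell\r_\ell q^\ell$ with $\r_\ell=\#\{\ell\text{-vertex closures of }\G_{\frac{r+s}{s}}\}$ and $\Sc(q)=\sum_i\s_i q^i$ with $\s_i=\#\{i\text{-vertex closures of }\G'_{\frac{r+s}{s}}\}$; hence $\gamma_\ell=\r_\ell-\s_{\ell-1}$.

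Next I would sort the closures of $\G_{\frac{r+s}{s}}$ according to their trace on the edge $1\leftarrow 2$. Vertex~$1$ is a sink, while vertex~$2$ carries an outgoing arrow to~$1$, so in any closure $\cC$ the configuration $1\notin\cC$, $2\in\cC$ cannot occur; the remaining possibilities are $\{1,2\}\subset\cC$, or $\{1,2\}\cap\cC=\emptyset$, or $1\in\cC$ and $2\notin\cC$. The first two are precisely the closures counted in the statement, so it suffices to prove that the number of $\ell$-vertex closures of the third type equals $\s_{\ell-1}$; this will give $\gamma_\ell=\r_\ell-\s_{\ell-1}$ as the claimed count.

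For the third type I would exhibit an explicit bijection with the closures of $\G'_{\frac{r+s}{s}}$. Following the chain of left-oriented edges $1\leftarrow 2\leftarrow\cdots\leftarrow(a_1+1)$ (each vertex $j+1$ having an outgoing arrow to~$j$), a closure $\cC$ with $1\in\cC$, $2\notin\cC$ must miss all of $2,3,\ldots,a_1+1$; moreover the only edge joining the block $\{1,\ldots,a_1+1\}$ to the block $\{a_1+2,\ldots,d\}$ is $(a_1+1)\to(a_1+2)$, which points \emph{into} the second block, so $\cC\cap\{a_1+2,\ldots,d\}$ may be any closure $\cD$ of $\G'_{\frac{r+s}{s}}$, and conversely $\{1\}\cup\cD$ is a closure of $\G_{\frac{r+s}{s}}$ of the required type for every such $\cD$. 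This correspondence adds one vertex, so it matches $\ell$-vertex closures of the third type with $(\ell-1)$-vertex closures of $\G'_{\frac{r+s}{s}}$, of which there are $\s_{\ell-1}$, finishing the proof. The one step demanding care is exactly this bijection: one must check that precisely the vertices $2,\ldots,a_1+1$ (and not $a_1+2$) are forced out of $\cC$, and that the single connecting edge decouples the two blocks; the degenerate case $a_1=1$, in which the forcing chain is trivial, should be mentioned, though the argument is uniform.
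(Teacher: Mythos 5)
Your proposal is correct and follows exactly the route the paper intends: the paper states this proposition as an immediate consequence of Proposition~\ref{JRS} and Theorem~\ref{EnumThm} without writing out the deduction, and your identification of the displayed graph with $\G_{\frac{r+s}{s}}$, the impossibility of $2\in\cC$, $1\notin\cC$, and the bijection $\cC\mapsto\cC\setminus\{1\}$ between the excluded $\ell$-closures and the $(\ell-1)$-closures of $\G'_{\frac{r+s}{s}}$ supply precisely the missing details. The argument checks out, including the forcing of $2,\ldots,a_1+1$ out of $\cC$ along the left-oriented chain and the decoupling across the single edge $(a_1+1)\to(a_1+2)$.
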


%%%%%%%%%%%%%%%%%%%%
\subsection{Examples}\label{ExJones}
%%%%%%%%%%%%%%%%%%%%
One can use the online atlas of knots to study examples of rational knots. 

(a)
Consider the knot $7\_4$ in Rolfsen's  Knot Table (see {katlas.org}). 
This knot is a rational knot parametrized by $\frac{15}{4}=[3,1,2,1]=\llbracket4,4\rrbracket$. 
It can be represented with the following diagram:\\

\begin{center}
\psscalebox{1.0 1.0} % Change this value to rescale the drawing.
{\psset{unit=0.5cm}
\begin{pspicture}(0,-2.62)(14.44,2.62)
\psline[linecolor=black, linewidth=0.04](0.02,1.4)(0.02,1.4)(2.02,-0.6)(2.02,-0.6)(2.82,0.2)
\psline[linecolor=black, linewidth=0.04](3.22,0.6)(4.02,1.4)(4.02,1.4)(6.42,-1.0)(6.82,-1.4)
\psline[linecolor=black, linewidth=0.04](0.02,-0.6)(0.02,-0.6)(0.82,0.2)(0.82,0.2)
\psline[linecolor=black, linewidth=0.04](1.22,0.6)(1.22,0.6)(2.02,1.4)(4.02,-0.6)(4.82,0.2)(4.82,0.2)(4.82,0.2)
\psline[linecolor=black, linewidth=0.04](5.22,0.6)(6.02,1.4)(6.02,1.4)(6.02,1.4)
\psline[linecolor=black, linewidth=0.04](7.22,-1.8)(8.02,-2.6)(8.02,-2.6)
\psline[linecolor=black, linewidth=0.04](6.02,-2.6)(8.82,0.2)(8.82,0.2)
\psline[linecolor=black, linewidth=0.04](9.22,0.6)(10.02,1.4)(12.82,-1.4)
\psline[linecolor=black, linewidth=0.04](13.22,-1.8)(14.02,-2.6)(14.42,-2.6)
\psline[linecolor=black, linewidth=0.04](14.42,-2.6)(14.42,2.6)(0.02,2.6)(0.02,1.4)(0.02,1.4)(0.02,1.4)
\psline[linecolor=black, linewidth=0.04](6.02,1.4)(8.02,1.4)(10.02,-0.6)(10.82,0.2)(10.82,0.2)
\psline[linecolor=black, linewidth=0.04](11.22,0.6)(12.02,1.4)(14.02,1.4)(14.02,-0.6)(12.02,-2.6)(8.42,-2.6)
\psline[linecolor=black, linewidth=0.04](8.42,-2.6)(8.02,-2.6)
\psline[linecolor=black, linewidth=0.04](6.02,-2.6)(0.02,-2.6)(0.02,-0.6)(0.02,-0.6)
\end{pspicture}
}
\end{center}
The Jones polynomial is $V_{15/4}(t)=-t^{8}(1-t^{-1}+2t^{-2}-3t^{-3}+2t^{-4}-3t^{-5}+2t^{-6}-t^{-7})$.

One computes with Proposition \ref{JRS}
$$
J_{15/4}(q)=q
\begin{vmatrix}
[4]_{q}&q^{3}\\
1&[4]_{q}
\end{vmatrix}+(1-q)[4]_{q}=1+q+2q^{2}+3q^{3}+2q^{4}+3q^{5}+2q^{6}+q^{7}.
$$
One immediately checks $V_{15/4}(t)=-t^{8}J_{15/4}(-t^{-1})$.

The coefficients can be recovered by counting the closures that do not isolate the first vertex of the following graph
$$
\xymatrix @!0 @R=0.8cm @C=1.3cm
{
1\ar@{<-}[r]&2\ar@{<-}[r]&3\ar@{<-}[r]&4\ar@{->}[r]&5\ar@{<-}[r]&6\ar@{<-}[r]&7
}
$$
There are

-- 1 zero-closure,

-- 1 one-closure: the sink 5,

-- 2 two-closures: (1,2), (5,6),

-- 3 three-closures: (1,2,3), (1,2,5), (5,6,7),

-- 2 four-closures: (1,2,3,5), (1,2,5,6)

-- 3 five-closures: (1,2,3,4,5), (1,2,3,5,6), (1,2,5,6,7)

-- 2 six-closures (1,2,3,4,5,6), (1,2,3,5,6,7)

-- 1 seven-closure. \\

(b)
Consider the link $L5a1$ from Thistlethwaite Link Table (see {katlas.org}). 
This link is parametrized by $\frac{8}{3}=[2,1,1,1]=\llbracket 3,3\rrbracket$. It can be represented with the following diagram:\\
\begin{center}
\psscalebox{1.0 1.0} % Change this value to rescale the drawing.
{\psset{unit=0.5cm}
\begin{pspicture}(0,-2.42)(10.84,2.42)
\psline[linecolor=black, linewidth=0.04](1.22,0.8)(2.02,1.6)(2.02,1.6)(4.42,-0.8)(4.82,-1.2)
\psline[linecolor=black, linewidth=0.04](0.02,1.6)(0.02,1.6)(0.02,1.6)(2.02,-0.4)(2.82,0.4)(2.82,0.4)(2.82,0.4)
\psline[linecolor=black, linewidth=0.04](3.22,0.8)(4.02,1.6)(4.02,1.6)(4.02,1.6)
\psline[linecolor=black, linewidth=0.04](5.22,-1.6)(6.02,-2.4)(6.02,-2.4)
\psline[linecolor=black, linewidth=0.04](4.02,-2.4)(6.82,0.4)(6.82,0.4)
\psline[linecolor=black, linewidth=0.04](4.02,1.6)(6.02,1.6)(8.02,-0.4)(8.02,-0.4)(8.02,-0.4)
\psline[linecolor=black, linewidth=0.04](6.42,-2.4)(6.02,-2.4)
\psline[linecolor=black, linewidth=0.04](0.82,0.4)(0.02,-0.4)(0.02,-2.4)(4.02,-2.4)
\psline[linecolor=black, linewidth=0.04](7.22,0.8)(8.02,1.6)(10.02,-0.4)(8.02,-2.4)(6.42,-2.4)
\psline[linecolor=black, linewidth=0.04](0.02,1.6)(0.02,2.4)(10.82,2.4)(10.82,-2.4)(10.02,-2.4)(9.22,-1.6)(9.22,-1.6)
\psline[linecolor=black, linewidth=0.04](8.82,-1.2)(8.02,-0.4)
\psline[linecolor=black, linewidth=0.04](8.42,-2.4)(8.42,-2.4)
\end{pspicture}
}
\end{center}

The Jones polynomial is $V_{8/3}(t)=-t^{3/2}(1-t^{-1}+2t^{-2}-t^{-3}+2t^{-4}-t^{-5})$.

One computes with Proposition \ref{JRS}
$$J_{8/3}(q)=q
\begin{vmatrix}
[3]_{q}&q^{2}\\
1&[3]_{q}
\end{vmatrix}+(1-q)[3]_{q}=1+q+2q^{2}+q^{3}+2q^{4}+q^{5}.
$$
One immediately checks $V_{8/3}(t)=-t^{3/2}J_{8/3}(-t^{-1})$.

The coefficients can be recovered by counting the closures that do not isolate the first vertex of the following graph
$$
\xymatrix @!0 @R=0.8cm @C=1.3cm
{
1\ar@{<-}[r]&2\ar@{<-}[r]&3\ar@{->}[r]&4\ar@{<-}[r]&5
}
$$
There are

-- 1 zero-closure,

-- 1 one-closure: the sink 4,

-- 2 two-closures: (1,2), (4,5),

-- 1 three-closures: (1,2,3),

-- 2 four-closures: (1,2,3,4), (1,2,4,5), 

-- 1 five-closure.

%%%%%%%%%%%%%%%%%%%%
%%%%%%%%%%%%%%%%%%%%
\section{Ptolemy relations and  $q$-continued fractions}\label{ptol}
%%%%%%%%%%%%%%%%%%%%
%%%%%%%%%%%%%%%%%%%%

We present here the $q$-analogue of the results of \cite[\S 1.5]{FB1}. 
This consists in recovering the polynomials $\Rc$ and $\Sc$ of the $q$-rational $[\frac{r}{s}]_{q}$ by applying Ptolemy relations in the triangulation $\mathbb{T}_{r/s}$. 
This will be also reformulated in the language of cluster algebras.

%%%%%%%%%%%%%%%%%%%%
\subsection{Ptolemy weights}
%%%%%%%%%%%%%%%%%%%%

Consider the triangulation $\mathbb{T}_{r/s}$ associated with the rational number 
$$\frac{r}{s}=[a_{1}, \ldots, a_{2m}]=\llbracket c_{1}, \ldots, c_{k}\rrbracket.$$
The vertices in the triangulation are numbered from $0$ to $n-1$ as in \S\ref{Trs}.
For all $i,j$ we assign a weight~$x_{i,j}$ to the edge joining the vertices~$i$ and~$j$, which is a value in~$\Z[q]$. Note that these weights are not the same as the ones defined 
on the weighted triangulation $\mathbb{T}^{q}_{r/s}$ in Section~\ref{WeTr}.

The weights $(x_{i,j})$ must satisfy the following system of Ptolemy relations
\begin{equation}
\label{PtolSys}
\left\{
\begin{array}{rcl}
x_{i,j}x_{k,\ell}&=&x_{i,k}x_{j,\ell}+x_{i,\ell}x_{k,j},
\qquad
%i\leq{}k\leq{}j\leq{}\ell,
\\[6pt]
x_{i,j}&=&x_{j,i},\\[6pt]
x_{i,i}&=&0.
\end{array}
\right.
\end{equation}
The weights $(x_{i,j})$ are uniquely determined from the ``initial weights'' which are the weights of the sides and diagonals of the triangulation $\mathbb{T}_{r/s}$.

\begin{figure}[htbp]
\begin{center}

\psscalebox{1.0 1.0} % Change this value to rescale the drawing.
{
\begin{pspicture}(0,-1.6)(13.374142,1.6)
\definecolor{colour0}{rgb}{1.0,0.2,0.0}
\psline[linecolor=black, linewidth=0.02](2.4241421,1.2)(0.024142135,-1.2)(1.2241422,-1.2)(2.4241421,1.2)(2.4241421,-1.2)(1.2241422,-1.2)(1.2241422,-1.2)
\psline[linecolor=black, linewidth=0.02](2.4241421,1.2)(3.6241422,-1.2)(2.4241421,-1.2)
\psline[linecolor=black, linewidth=0.02](2.4241421,1.2)(4.824142,-1.2)(3.6241422,1.2)(2.4241421,1.2)(2.4241421,1.2)
\psline[linecolor=black, linewidth=0.02](3.6241422,1.2)(4.824142,1.2)(4.824142,-1.2)(6.0241423,1.2)(4.824142,1.2)(4.824142,1.2)
\psline[linecolor=black, linewidth=0.02, linestyle=dashed, dash=0.17638889cm 0.10583334cm](3.6241422,-1.2)(8.424142,-1.2)
\rput[bl](0.42414212,-1.6){\textcolor{blue}{$1$}}
\rput[bl](1.6241422,-1.6){\textcolor{blue}{$q$}}
\rput[bl](2.8241422,-1.6){\textcolor{blue}{$q^2$}}
\rput[bl](2.8241422,1.2){\textcolor{blue}{$q^{-a_1}$}}
\rput[bl](10.024142,-1.6){\textcolor{blue}{$q^{a_1+\ldots+a_{2m-1}-1}$}}
\psline[linecolor=black, linewidth=0.02](9.624142,1.2)(10.824142,1.2)(10.824142,-1.2)(12.024142,1.2)(10.824142,1.2)
\psline[linecolor=black, linewidth=0.02](12.024142,1.2)(13.224142,1.2)(10.824142,-1.2)
\psline[linecolor=black, linewidth=0.02](8.424142,1.2)(7.224142,-1.2)
\psline[linecolor=black, linewidth=0.02](4.824142,-1.2)(4.824142,-1.2)
\psline[linecolor=black, linewidth=0.02](8.424142,1.2)(8.424142,-1.2)(9.624142,-1.2)(8.424142,1.2)
\psline[linecolor=black, linewidth=0.02](8.424142,1.2)(10.824142,-1.2)(9.624142,-1.2)
\psline[linecolor=black, linewidth=0.02](9.624142,1.2)(10.824142,-1.2)
\psline[linecolor=black, linewidth=0.02, linestyle=dashed, dash=0.17638889cm 0.10583334cm](8.424142,1.2)(9.624142,1.2)
\rput[t](11.624142,1.6){\textcolor{blue}{$q^{-(n-2)}$}}
\rput[t](12.824142,1.6){\textcolor{blue}{$q^{-(n-3)}$}}
\rput[tl](4.0241423,1.6){\textcolor{blue}{$q^{-(a_1+1)}$}}
\rput[tl](5.624142,1.6){\textcolor{blue}{$q^{-(a_1+2)}$}}
\psline[linecolor=colour0, linewidth=0.02, linestyle=dashed, dash=0.17638889cm 0.10583334cm](6.0241423,1.2)(6.0241423,1.2)
\psline[linecolor=black, linewidth=0.02, linestyle=dashed, dash=0.17638889cm 0.10583334cm](6.0241423,1.2)(8.424142,1.2)(8.424142,1.2)
\psline[linecolor=black, linewidth=0.02, linestyle=dashed, dash=0.17638889cm 0.10583334cm](4.824142,-1.2)(7.224142,1.2)(6.0241423,-1.2)
\psline[linecolor=black, linewidth=0.02, linestyle=dashed, dash=0.17638889cm 0.10583334cm](7.224142,1.2)(7.224142,-1.2)
\end{pspicture}
}
\caption{The assignment \eqref{Ptolength} on a triangulation (the vertical edges have weight 1)}
\label{figptol}
\end{center}
\end{figure}
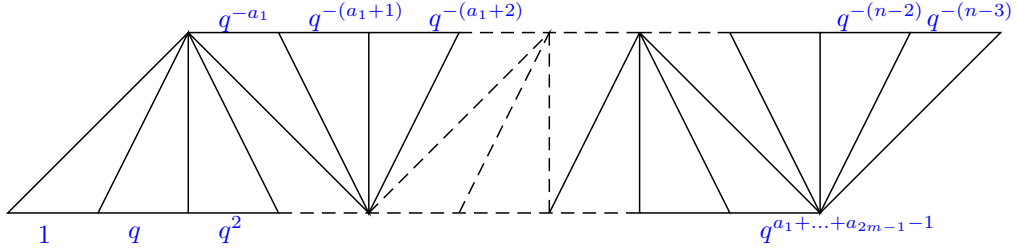

We assign the following initial values to the sides and diagonals of $\mathbb{T}_{rs}$, see Figure \ref{figptol}.
We number the triangles from $0$ to $n-3$ from left to right and we set
\begin{equation}\label{Ptolength}
x_{i,j}=
\left\{
\begin{array}{rl}
%q^{-1}, & \text{if  $[i,j]=[n-1,0]$};\\[6pt]
% q^{\ell}, & \text{if  $[i,j]\not=[n-1,0]$ is the base of the $\ell$-th triangle and the triangle is base down} ; \\[6pt]
q^{\ell}, & \text{if  $[i,j]$ is the base of the $\ell$-th triangle and the triangle is base down} ; \\[6pt]
 q^{-\ell}, &\text{if  $[i,j]$ is the base of the $\ell$-th triangle and the triangle is base up};  \\[6pt]
 1, & \text{otherwise}. 
\end{array}
\right.
\end{equation}

The $q$-analogue of \cite[Fact 3 p7]{FB1} is the following statement.

\begin{prop}\label{propx}
If $(x_{i,j})$ are the weights of the edges of $\mathbb{T}_{r/s}$ satisfying \eqref{PtolSys} and \eqref{Ptolength},
then one has
\begin{eqnarray*}
q^{n-3}x_{0,k+1}&=&\left|
\begin{array}{cccccc}
[c_1]_{q}&q^{c_{1}-1}&&&\\[4pt]
1&[c_{2}]_{q}&q^{c_{2}-1}&&\\[4pt]
&\ddots&\ddots&\!\!\ddots&\\[4pt]
&&1&[c_{k-1}]_{q}&q^{c_{k-1}-1}\\[4pt]
&&&1&[c_{k}]_{q}
\end{array}
\right|=\Rc,\\[12pt]
q^{n-3}x_{1,k+1}&=&\left|
\begin{array}{lllll}
[c_{2}]_{q}&q^{c_{2}-1}&&\\[4pt]
1&\ddots&\!\!\ddots&\\[4pt]
&\ddots&[c_{k-1}]_{q}&q^{c_{k-1}-1}\\[4pt]
&&1&[c_{k}]_{q}
\end{array}
\right|
=\Sc.
\end{eqnarray*}
In particular, $\dfrac{x_{0,k+1}}{x_{1,k+1}}=\left[\frac{r}{s}\right]_{q}$ and $x_{0,k+1}=K_{k}(c_{1}, \ldots, c_{1})_{q^{-1}}$.
\end{prop}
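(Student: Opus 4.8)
The plan is to realize the entire family $(x_{i,j})$ as the $2\times 2$ minors of a single $2\times n$ matrix, and then just read off the two entries we need. The first relation in \eqref{PtolSys} is exactly the Pl\"ucker relation satisfied by the minors $\det(\Phi_i,\Phi_j)$ of any matrix $(\Phi_0\mid\Phi_1\mid\cdots\mid\Phi_{n-1})$ with columns $\Phi_i\in\Z[q,q^{-1}]^2$; conversely, a solution of \eqref{PtolSys} is determined by its values on the sides and diagonals of $\T_{r/s}$, since these edges form a triangulation and every other edge is reached by a finite sequence of flips, each flip being a single instance of the Ptolemy relation across a quadrilateral whose two diagonals share a weight that is a power of $q$, hence a nonzero scalar. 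So it suffices to exhibit columns $\Phi_i$ whose minors reproduce the initial assignment \eqref{Ptolength} on all sides and diagonals of $\T_{r/s}$; uniqueness then forces $x_{i,j}=\det(\Phi_i,\Phi_j)$ for all $i,j$.

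Here is the candidate. Label the vertices of $\T^{q}_{r/s}$ by $q$-rationals as in Section~\ref{WeTr} and write $\tfrac{\Rc^{(i)}}{\Sc^{(i)}}$ for the label at vertex $i$; for an upper vertex $1\le j\le k+1$ one has $\tfrac{\Rc^{(j)}}{\Sc^{(j)}}=\llbracket c_{1},\ldots,c_{j-1}\rrbracket_{q}$ (the convention giving $\tfrac{\Rc^{(1)}}{\Sc^{(1)}}=\tfrac10$), while $\tfrac{\Rc^{(0)}}{\Sc^{(0)}}=\tfrac01$. Set
$$
\Phi_{1}=\begin{pmatrix}1\\0\end{pmatrix},\qquad
\Phi_{0}=\begin{pmatrix}0\\-1\end{pmatrix},\qquad
\Phi_{i}=q^{\mu_{i}}\begin{pmatrix}\Rc^{(i)}\\ \Sc^{(i)}\end{pmatrix}\quad(i\ne 0,1),
$$
with integer exponents $\mu_{i}$ still to be chosen and normalised by $\mu_{1}=0$. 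Using Proposition~\ref{FacMM} together with $\det M_{q}(c_{1},\ldots,c_{m})=q^{c_{1}+\cdots+c_{m}-m}$, one gets for consecutive upper vertices $\det(\Phi_{j},\Phi_{j+1})=q^{\mu_{j}+\mu_{j+1}+c_{1}+\cdots+c_{j-1}-(j-1)}$; comparing with the weight $q^{-(c_{1}+\cdots+c_{j}-j)}$ that \eqref{Ptolength} (via the conversion \eqref{HZRegEqt}) attaches to the base $[j,j+1]$ of the relevant base-up triangle yields $\mu_{j}=-(c_{1}+\cdots+c_{j-1}-(j-1))$, whence $\mu_{k+1}=-(c_{1}+\cdots+c_{k}-k)=-(n-3)$. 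One then checks by induction on the left-to-right triangle index that with these $\mu_{i}$ the minors $\det(\Phi_{i},\Phi_{j})$ also match \eqref{Ptolength} on every slanted side (value $1$) and lower base (value $q^{\ell}$): inside each triangle the three vertex-labels are related by the weighted Farey sum $\tfrac{\Rc'''}{\Sc'''}=\tfrac{\Rc'+q^{\ell}\Rc''}{\Sc'+q^{\ell}\Sc''}$ of Theorem~\ref{WeightThm}, so $\det(\Phi',\Phi''')=q^{\ell}\det(\Phi',\Phi'')$, which is precisely the Ptolemy-compatible behaviour.

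Granting this verification, uniqueness gives $x_{i,j}=\det(\Phi_{i},\Phi_{j})$, and in particular
$$
x_{0,k+1}=\det(\Phi_{0},\Phi_{k+1})=q^{\mu_{k+1}}\Rc=q^{-(n-3)}\Rc,\qquad
x_{1,k+1}=\det(\Phi_{1},\Phi_{k+1})=q^{\mu_{k+1}}\Sc=q^{-(n-3)}\Sc.
$$
Since $\Rc=K_{k}(c_{1},\ldots,c_{k})_{q}$ and $\Sc=K_{k-1}(c_{2},\ldots,c_{k})_{q}$ by Proposition~\ref{fracont} (which also identifies these continuants with the two displayed determinants), this is the asserted identity $q^{n-3}x_{0,k+1}=\Rc$, $q^{n-3}x_{1,k+1}=\Sc$. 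Dividing gives $\tfrac{x_{0,k+1}}{x_{1,k+1}}=\tfrac{\Rc}{\Sc}=\left[\tfrac{r}{s}\right]_{q}$; and since $n-3=c_{1}+\cdots+c_{k}-k$, the mirror formula of Remark~\ref{mirform}(i) gives $x_{0,k+1}=q^{-(n-3)}K_{k}(c_{1},\ldots,c_{k})_{q}=K_{k}(c_{k},\ldots,c_{1})_{q^{-1}}$.

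The main obstacle is the bookkeeping in the second paragraph: one must reconcile the geometric triangle-index $\ell$ of \eqref{Ptolength} with the continued-fraction data $(c_{i})$, and verify that a single exponent $\mu_{i}$ at each vertex simultaneously fits every edge incident to it, so that the global assignment $\Phi_{i}=q^{\mu_{i}}\left(\begin{smallmatrix}\Rc^{(i)}\\ \Sc^{(i)}\end{smallmatrix}\right)$ is consistent and not merely correct along the upper boundary. Once the exponents are correctly tabulated this is a routine triangle-by-triangle induction resting on Theorem~\ref{WeightThm} and Proposition~\ref{FacMM}; an essentially equivalent alternative is to transcribe the induction proving \cite[Fact~3]{FB1} with the powers $q^{\ell}$ inserted.
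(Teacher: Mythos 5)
Your argument is correct in substance but follows a genuinely different route from the paper. The paper works forward from the Ptolemy system: it first proves a general determinant formula expressing $x_{i-1,j+1}$ as a tridiagonal continuant in the near-diagonal weights $x_{i,i+2}$ (by induction on the Ptolemy relations), then computes each $x_{i-1,i+1}=q^{-(c_1+\cdots+c_i-i)}[c_i]_q$ via a separate ``fan'' lemma, and finally assembles $x_{0,k+1}$ and $x_{1,k+1}$ from these. You instead guess the global solution as the $2\times2$ minors of a single $2\times n$ matrix whose columns are $q^{\mu_i}\bigl(\begin{smallmatrix}\Rc^{(i)}\\ \Sc^{(i)}\end{smallmatrix}\bigr)$, invoke the Pl\"ucker relations to see that these minors solve \eqref{PtolSys}, and use uniqueness of the solution given the initial data to conclude. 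This is closer in spirit to the paper's proof of Proposition~\ref{EulqProp} (which uses exactly such a $2\times n$ matrix of convergents and its minors $\Delta_{i,j}$) than to its proof of Proposition~\ref{propx}; what it buys is an explicit closed form for \emph{every} weight $x_{i,j}$, not just the two needed, and a transparent link to Theorem~\ref{WeightThm} and Corollary~\ref{voisin}. Your computation of the $\mu_j$ along the upper boundary is correct (I checked the exponent bookkeeping against \eqref{HZRegEqt} and Figure~\ref{figptol}), as is the final descent to $\Rc$, $\Sc$ and the mirror formula. The one place where the argument is thinner than it should be is the deferred verification that a \emph{single} exponent $\mu_i$ per vertex fits all incident edges: the system $\mu_i+\mu_j=\log_q(t_{ij}/w_{ij})$ over the $2n-3$ edges of the triangulation is overdetermined in the $n$ unknowns, so there are genuine compatibility conditions, one per triangle shared with its neighbour. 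They do hold --- the raw minors $\Rc^{(i)}\Sc^{(j)}-\Sc^{(i)}\Rc^{(j)}$ on triangulation edges are pure powers of $q$ by Corollary~\ref{voisin}, and the triangle-by-triangle ratios $1:q^{\ell}:q^{\ell-1}$ from Theorem~\ref{WeightThm} can be rescaled to the pattern $1:1:q^{\pm\ell}$ of \eqref{Ptolength} consistently across adjacent triangles --- but since this is the only point at which the two assignments (which the paper explicitly warns are \emph{not} the same) are reconciled, it deserves to be written out rather than labelled routine.
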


\begin{proof}
This can be established by elementary computations using the Ptolemy relations.
We do the computations in two steps using the following lemmas.

\begin{lem}
For a system $(x_{i,j})$  satisfying \eqref{PtolSys} one has
\begin{equation}\label{PtolGen}
x_{i-1,j+1}=
\frac{1}{x_{i,i+1}x_{i+1,i+2}\cdots x_{j-1,j}}
\begin{vmatrix}
x_{i-1,i+1}&x_{i+1,i+2}&\\[4pt]
x_{i-1,i}&x_{i,i+2}&x_{i+2,i+3}&\\[4pt]
&\;\;\;\ddots&\ddots\\[6pt]
&&x_{j-3,j-2}&x_{j-2,j}&x_{j,j+1}&\\[4pt]
&&&x_{j-2,j-1}&x_{j-1,j+1}
\end{vmatrix}
\end{equation}
\end{lem}

\begin{proof}
This is checked by an easy induction.
\end{proof}

We first use this formula in order to compute $x_{i-1,i+1}$
\begin{lem}\label{lemx}
For the following triangulation of a $(c+2)$-gon
$$
\xymatrix @!0 @R=0.8cm @C=1cm
{
0\ar@{-}[rrr]^{q^{-\beta}}\ar@{-}[dd]_{1}
&&&c+1\ar@{-}[ldd]^{1}\ar@{-}[lldd]^1\ar@{-}[llldd]_1\ar@{-}[rrr]^{q^{-(\beta+c-1)}}\ar@{-}[rrrdd]^{1}
\ar@{-}[rdd]^{1}
&&&c
\\
\\
1\ar@{-}[r]_{q^{\beta+1}}
&2\ar@{-}[r]_{q^{\beta+2}}
&3
&\cdots&c-2 \ar@{-}[rr]_{q^{\beta+c-2}}&
&c-1\ar@{-}[uu]_{1}
}
$$
one has 
$$
x_{0,c}=[c]_{q}q^{-(\beta+c-1)}.
$$
\end{lem}
\begin{proof}
Applying the Ptolemy relations in quadrilaterals with vertices $i,i+1,i+2, c+1$,
one gets
$$
x_{0,2}=1+q, 
\qquad 
x_{i,i+2}=q^{\beta+i}(1+q),\;\; 0<i<c-2, 
\qquad 
x_{c-2,c}=1+q^{-1}.
$$
One then has $x_{i,i+1}=q^{\beta+i}$ for $1\leq i< c-2$, and can write
$$
\frac{x_{i+1,i+2}}{x_{i,i+1}}=q, 
\qquad 
\frac{x_{i,i+2}}{x_{i,i+1}}=1+q=[2]_{q}.
$$
We use this information to compute $x_{0,c}$ with Eq.~\eqref{PtolGen}.
We first rescale the columns, except the first and last, by dividing by $x_{i,i+1}$. We obtain
$$x_{0,c}=
\frac{1}{x_{c-2,c-1}}
\begin{vmatrix}
x_{0,2}&q&\\[4pt]
x_{0,1}&[2]_{q}&q&\\[4pt]
&1&[2]_{q}&q&\\[4pt]
&&\ddots&\ddots&\ddots\\[6pt]
&&&1&[2]_{q}&x_{c-1,c}&\\[4pt]
&&&&1&x_{c-2,c}
\end{vmatrix}
=
\frac{1}{q^{\beta+c-1}}
\begin{vmatrix}
[2]_{q}&q&\\[4pt]
1&[2]_{q}&q&\\[4pt]
&1&[2]_{q}&q&\\[4pt]
&&\ddots&\ddots&\ddots\\[6pt]
&&&1&[2]_{q}&q&\\[4pt]
&&&&1&1+q
\end{vmatrix}
$$
One easily checks that $K_{c-1}(2,2,\ldots,2)_{q}=[c]_{q}$. So that we deduce $x_{0,c}=[c]_{q}q^{-(\beta+c-1)}$.
\end{proof}

Going back to the assigment of Figure \ref{figptol}, by Lemma \ref{lemx} one gets
$$
x_{i-1,i+1}=q^{-(c_{1}+\ldots+c_{i}-i)}[c_{i}]_{q}
$$
for all $1\leq i \leq k$.  So that $\frac{x_{i-1,i+1}}{x_{i-1,i}}=[c_{i}]_{q}$. One then computes 
$x_{0,k+1}$ and $x_{1,k+1}$
using \eqref{PtolGen}, and obtains the desired formulas. 
The final formula $x_{0,k+1}=K_{k}(c_{1}, \ldots, c_{1})_{q^{-1}}$ is obtained using the mirror formula in Remark \ref{mirform}.
\end{proof}

%%%%%%%%%%%%%%%%%%%%
\subsection{Cluster variables and $F$-polynomials}
%%%%%%%%%%%%%%%%%%%%

We translate the above results in the language of cluster algebras~\cite{FZ4} for
the readers familiar with this language.
We will use the notation of~\cite{FZ4,DWZ}.

The weights $(x_{i,j})$, attached to  the edges of the triangulation $\mathbb{T}_{r/s}$ 
and satisfying \eqref{PtolSys}, 
can be viewed as cluster variables in the algebra $\mathcal{A}(\mathbf{x}, \mathbf{y}, B_{r/s})$, 
where $B_{r/s}$ is the opposite of the matrix of incidence of the graph $\G_{r/s}$ defined in Section~\ref{Grs}. 
The variables $\mathbf{x}=(x_{1}, \ldots, x_{n-3})$ are the weights of the diagonals $d_{1}, \ldots , d_{n-3}$ of $\mathbb{T}_{r/s}$ numbered from left to right. 
The variables $\mathbf{y}=(y_{1}, \ldots, y_{n-3})$ can be considered as products of `frozen variables' 
(i.e. weights on the sides of the triangulated $n$-gon), namely 
$$
y_{j}=\prod_{h}x_{h-1,h}\,\prod_{i}x_{i-1,i}^{-1},
$$
 where the edges $[h-1,h]$ and $[i-1,i]$ are sides belonging to the quadrilateral defined by $d_{j}$, which have a positive incidence, resp.  negative incidence, with the diagonal $d_{j}$. 

In this setting, the weights $x_{0,k+1}$ and $x_{1,k+1}$ are cluster variables. 
They correspond to the cluster variables with denominators $x_{1}x_{2}\ldots x_{n-3}$ and  $x_{c_{1}}x_{c_{1}+1}\ldots x_{n-3}$ respectively. 
Let us denote by 
$F_{0,k+1}(y_{1}, \ldots, y_{n-3})$ and $F_{1,k+1}(y_{1}, \ldots, y_{n-3})$ the corresponding $F$-polynomials. 

When the weights $(x_{i,j})$ satisfy the initial assignment \eqref{Ptolength},  one gets $y_{j}=q$ for all $j$ and
Proposition~\ref{propx} then can be reformulated as follows.

\begin{cor}
\label{FPolCor}
One has
$$
x_{0,k+1}=q^{3-n}F_{0,k+1}(q, \ldots, q), \qquad x_{1,k+1}=q^{3-n}F_{1,k+1}(q, \ldots, q).
$$
In particular one obtains 
$$
\Rc(q)=F_{0,k+1}(q, \ldots, q), \qquad \Sc(q)=F_{1,k+1}(q, \ldots, q),
$$
where $\frac{\Rc}{\Sc}=\left[\frac{r}{s}\right]_{q}$.
\end{cor}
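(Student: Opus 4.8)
The plan is to read the corollary as a dictionary entry: Proposition~\ref{propx} already contains the substantive computation, and the statement merely rephrases it in cluster-algebra terms. The tool is the separation-of-additions formula of Fomin--Zelevinsky~\cite{FZ4}: in a cluster algebra $\mathcal A(\mathbf x,\mathbf y,B)$ every cluster variable $x_\ell$ factors as
$$
x_\ell=\Bigl(\textstyle\prod_i x_i^{\,g_i}\Bigr)\,\frac{F_\ell(\hat y_1,\ldots,\hat y_{n-3})}{F_\ell|_{\mathbb P}(y_1,\ldots,y_{n-3})},\qquad \hat y_j=y_j\,\textstyle\prod_i x_i^{\,b_{ij}},
$$
where $(g_i)$ is the $g$-vector, $F_\ell$ the $F$-polynomial, and $F_\ell|_{\mathbb P}$ its evaluation in the coefficient semifield $\mathbb P$ (the tropical semifield generated by the boundary weights). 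I would apply this to the two cluster variables $x_{0,k+1}$ and $x_{1,k+1}$ of $\mathcal A(\mathbf x,\mathbf y,B_{r/s})$, which have denominators $x_1\cdots x_{n-3}$ and $x_{c_1}\cdots x_{n-3}$ as recorded above. A solution of the Ptolemy system \eqref{PtolSys} is precisely an assignment of values to all cluster and coefficient variables compatible with the exchange relations, so the formula above may legitimately be evaluated at the particular solution \eqref{Ptolength}.

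The second step is that evaluation. Under \eqref{Ptolength} every diagonal of $\mathbb T_{r/s}$ carries weight $1$ (only boundary edges get the nontrivial powers of $q$), so the mutable initial variables $x_i$ all become $1$; hence $\hat y_j=y_j$ for every $j$ and, as already recorded in the text, $y_j=q$ for all $j$. Consequently $F_\ell(\hat y_1,\ldots,\hat y_{n-3})$ specialises to $F_\ell(q,\ldots,q)$, while the remaining prefactor $\prod_i x_i^{g_i}/F_\ell|_{\mathbb P}(y)$ collapses to a single power of $q$; computing that power — from the tropical evaluation of the $F$-polynomial together with the $g$-vectors of $x_{0,k+1}$ and $x_{1,k+1}$ in the type-$A$ cluster algebra $\mathcal A(\mathbf x,\mathbf y,B_{r/s})$ — one finds it equals $q^{3-n}$. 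This gives $x_{0,k+1}=q^{3-n}F_{0,k+1}(q,\ldots,q)$ and $x_{1,k+1}=q^{3-n}F_{1,k+1}(q,\ldots,q)$, the first assertion of the corollary.

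Finally, the ``in particular'' is immediate: Proposition~\ref{propx} gives $q^{n-3}x_{0,k+1}=\Rc$ and $q^{n-3}x_{1,k+1}=\Sc$ at the assignment \eqref{Ptolength}, so multiplying the two identities just obtained by $q^{n-3}$ yields $\Rc(q)=F_{0,k+1}(q,\ldots,q)$ and $\Sc(q)=F_{1,k+1}(q,\ldots,q)$, with $\Rc/\Sc=\left[\frac rs\right]_q$ again by Proposition~\ref{propx}. I expect the only genuinely delicate point to be the power-of-$q$ bookkeeping in the second step. It can be sidestepped altogether by checking directly that $F_{0,k+1}(q,\ldots,q)$ and $F_{1,k+1}(q,\ldots,q)$ obey the linear recursion \eqref{LinRecEq} that characterises $\Rc$ and $\Sc$, or equivalently by comparing with the mirror identity $x_{0,k+1}=K_k(c_1,\ldots,c_k)_{q^{-1}}$ from Proposition~\ref{propx} together with Remark~\ref{mirform}(i), which make the exponent $-(c_1+\cdots+c_k-k)=3-n$ transparent.
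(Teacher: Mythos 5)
Your argument is correct and is essentially the derivation the paper intends: the corollary is stated without proof as a direct reformulation of Proposition~\ref{propx}, resting on exactly the observations you make (the Ptolemy system is the system of exchange relations, the assignment \eqref{Ptolength} makes every initial cluster variable equal to $1$ and every $y_j$ equal to $q$, and the Fomin--Zelevinsky separation formula then reduces $x_{0,k+1}$ and $x_{1,k+1}$ to a power of $q$ times $F_{\ell}(q,\ldots,q)$). The one step you leave unverified in every branch --- that the prefactor is $q^{3-n}$, equivalently that $F_{0,k+1}(q,\ldots,q)=\Rc(q)$ --- is not really handled by your mirror-formula fallback, which only re-derives the exponent already present in Proposition~\ref{propx}; it is most cleanly closed by the route the paper itself supplies in the remark immediately following the corollary: the Derksen--Weyman--Zelevinsky formula writes $F_{0,k+1}(y_1,\ldots,y_{n-3})=\sum_{\cC}\prod_i y_i^{\varepsilon_{i,\cC}}$ over closures $\cC$ of $\G_{r/s}$, so $F_{0,k+1}(q,\ldots,q)=\sum_{\cC}q^{|\cC|}=\Rc(q)$ by Theorem~\ref{EnumThm}, and likewise for $F_{1,k+1}$ and $\Sc$. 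With that identity in hand, both displays of the corollary follow from Proposition~\ref{propx} exactly as in your final paragraph.
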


\begin{ex}
We go over the examples of 
the rationals~$\frac{r}{s}=\frac{5}{2},\frac{5}{3}$ and~$\frac{7}{5}$, as in Examples \ref{ExT75} and \ref{GGraphEx}. The $F$-polynomials can be computed from the graphs $\G_{r/s}$ with opposite orientations and  Bernhard Keller's applet \cite{applet}.

For $\frac{r}{s}=\frac{5}{2}$, we consider the graph $1\rightarrow 2 \leftarrow 3$. We compute the $F$-polynomials $F_{03}$ and $F_{13}$ corresponding to the cluster variables of denominators $x_{1}x_{2}x_{3}$ and $x_{3}$ respectively. We find
$$F_{03}=1+y_{1}+y_{3}+y_{1}y_{3}+y_{1}y_{2}y_{3}, \qquad F_{13}=1+y_{3}.$$
One checks $[\frac52]_{q}=\frac{1+2q+q^{2}+q^{3}}{1+q}=\frac{F_{03}(q,q,q)}{F_{13}(q,q,q)}$.

For $\frac{r}{s}=\frac{5}{3}$, we consider the graph $1 \leftarrow2\rightarrow  3$. We compute the $F$-polynomials $F_{03}$ and $F_{13}$ corresponding to the cluster variables of denominators $x_{1}x_{2}x_{3}$ and $x_{2}x_{3}$ respectively. We find
$$F_{03}=1+y_{2}+y_{1}y_{2}+y_{2}y_{3}+y_{1}y_{2}y_{3}, \qquad F_{13}=1+y_{2}+y_{2}y_{3}.$$
One checks $[\frac53]_{q}=\frac{1+q+2q^{2}+q^{3}}{1+q+q^{2}}=\frac{F_{03}(q,q,q)}{F_{13}(q,q,q)}$.

For $\frac{r}{s}=\frac{7}{5}$, we consider the graph $1 \leftarrow2\leftarrow3\rightarrow  4$. We compute the $F$-polynomials $F_{04}$ and $F_{14}$ corresponding to the cluster variables of denominators $x_{1}x_{2}x_{3}x_{4}$ and $x_{2}x_{3}x_{4}$ respectively. We find
$$F_{04}=1+y_{3}+y_{2}y_{3}+y_{3}y_{4}+y_{1}y_{2}y_{3}+y_{2}y_{3}y_{4}+y_{1}y_{2}y_{3}y_{4},\qquad F_{14}=1+y_{3}+y_{2}y_{3}+y_{3}y_{4}+y_{2}y_{3}y_{4}.$$
One checks $[\frac75]_{q}=\frac{1+q+2q^{2}+2q^{3}+q^{4}}{1+q+2q^{2}+q^{3}}=\frac{F_{04}(q,q,q,q)}{F_{14}(q,q,q,q)}$.
\end{ex}

\begin{rem}
Let us mention that the $F$-polynomial $F_{0,k+1}(y_{1},\ldots, y_{n-3})$ can be described using the closures of the graph $\G_{r/s}$. For  $\mathcal{C}$ a closure of the graph we use the following notation
$$
\varepsilon_{i,\mathcal{C}}=\left\{
\begin{array}{ccc}
 1 &  \text{ if } & i\in \mathcal{C}  \\
 0 &  \text{ if } & i\not\in \mathcal{C}  
\end{array}
\right.
.
$$
One has the following expression of the $F$-polynomial
$$F_{0,k+1}(y_{1},\ldots, y_{n-3})=
\sum_{ \mathcal{C} }y_{1}^{\varepsilon_{1,\mathcal{C}}}y_{2}^{\varepsilon_{2,\mathcal{C}}}\ldots y_{n-3}^{\varepsilon_{n-3,\mathcal{C}}},$$
where the sum runs over all the closures $\mathcal{C}$ of $\G_{r/s}$.
This is a particular case of a result from~\cite{DWZ}.
\end{rem}

\begin{rem}
\label{RB7}
$F$-polynomials can be used to compute Jones polynomials of rational knots; see~\cite{LeSc}.
Let us consider weights $(x_{i,j})$ on the triangulation $\mathbb{T}_{r/s}$ satisfying the Ptolemy relations \eqref{PtolSys} 
and the same initial assignment \eqref{Ptolength} except for the weight on the side $[0,n-1]$ that we change to $x_{0,n-1}=q^{-1}$. In this case we obtain 
$$x_{0,k+1}=q^{3-n}F_{0,k+1}(q^{2},q,\ldots,q)$$
where as above $F_{0,k+1}$ is the $F$-polynomial corresponding to the longest cluster variable in $\mathcal{A}(\mathbf{x}, \mathbf{y}, B_{r/s})$. 
It was proved in \cite{LeSc} that the above specialization of $F_{0,k+1}$ gives precisely the normalized Jones polynomial $J_{\frac{r}{s}}(q)$.

For instance for  $\frac{r}{s}=\frac{8}{3}=[2,1,1,1]=\llbracket 3,3\rrbracket$ the corresponding graph is $1 \rightarrow2\leftarrow3\rightarrow  4$, and we compute
$$
F_{03}=1 + y_{1} + (1 +y_1 + y_1y_2)y_3 + ((1 + y_1 + y_1y_2)y_3)y_4
$$
On obtains
$$
F_{03}(q^{2},q,q,q)=1+q+2q^{2}+q^{3}+2q^{4}+q^{5}=J_{8/3}(q)
$$
see Section~\ref{ExJones}(b), whereas 
$$
F_{03}(q,q,q,q)=1+2q+2q^{2}+2q^{3}+q^{4}
$$
is the numerator of $[\frac{8}{3}]_{q}$, see Example \ref{FirstEx}(d).
\end{rem}
\bigbreak \noindent
{\bf Acknowledgements}.
We are grateful to Fr\'ed\'eric Chapoton and Vladimir Fock for their interest and a number of useful comments;
special thanks are due to Fr\'ed\'eric Chapoton for an efficient computer program.
This paper was partially supported by the ANR projects SC3A, ANR-15-CE40-0004-01 and PhyMath, ANR-19-CE40-0021.

\end{document}